\title{Torus knots, the A-polynomial, and $\SL(2,\C)$}
\author[John A. Baldwin]{John A. Baldwin}
\address{Department of Mathematics \\ Boston College}
\email{john.baldwin@bc.edu}
\author[Steven Sivek]{Steven Sivek}
\address{Department of Mathematics\\Imperial College London}
\email{s.sivek@imperial.ac.uk}
\newtheorem*{rep@theorem}{\rep@title}
\newcommand{\newreptheorem}[2]{%
\newenvironment{rep#1}[1]{%
 \def\rep@title{#2 \ref{##1}}%
 \begin{rep@theorem}}%
 {\end{rep@theorem}}}
\newtheorem {theorem}{Theorem}
\newtheorem {lemma}[theorem]{Lemma}
\newtheorem {proposition}[theorem]{Proposition}
\newtheorem {corollary}[theorem]{Corollary}
\newtheorem {question}[theorem]{Question}
\numberwithin{equation}{section}
\numberwithin{theorem}{section}
\theoremstyle{definition}
\newtheorem{definition}[theorem]{Definition}
\newtheorem{remark}[theorem]{Remark}
\newtheorem*{remark*}{Remark}
\newtheorem{example}[theorem]{Example}
\newlist{pcases}{enumerate}{1}
\setlist[pcases]{
  label=\bf{Case~\arabic*:}\protect\thiscase.~,
  ref=\arabic*,
  align=left,
  labelsep=0pt,
  leftmargin=0pt,
  labelwidth=0pt,
  parsep=0pt
}
\newcommand{\case}[1][]{%
  \if\relax\detokenize{#1}\relax
    \def\thiscase{}%
  \else
    \def\thiscase{~#1}%
  \fi
  \item
}
\newcommand{\Z}{\mathbb{Z}}
\newcommand{\R}{\mathbb{R}}
\newcommand{\C}{\mathbb{C}}
\newcommand{\Q}{\mathbb{Q}}
\newcommand{\cN}{\mathcal{N}}
\newcommand\SU{\mathrm{SU}}
\newcommand\SL{\mathrm{SL}}
\newcommand\PSL{\mathrm{PSL}}
\newcommand\KHI{\mathit{KHI}}
\newcommand\Is{I^\#}
\DeclareFontFamily{U}{mathx}{\hyphenchar\font45}
\DeclareFontShape{U}{mathx}{m}{n}{
      <5> <6> <7> <8> <9> <10>
      <10.95> <12> <14.4> <17.28> <20.74> <24.88>
      mathx10
      }{}
\DeclareSymbolFont{mathx}{U}{mathx}{m}{n}
\DeclareMathAccent{\widecheck}{0}{mathx}{"71}
\newcommand{\ab}{\operatorname{ab}}
\newcommand{\tr}{\operatorname{tr}}
\newcommand{\pt}{\mathrm{pt}}
\newcommand{\mirror}[1]{\overline{#1}}
\DeclareFontFamily{OMX}{MnSymbolE}{}
\DeclareSymbolFont{MnLargeSymbols}{OMX}{MnSymbolE}{m}{n}
\DeclareFontShape{OMX}{MnSymbolE}{m}{n}{
    <-6>  MnSymbolE5
   <6-7>  MnSymbolE6
   <7-8>  MnSymbolE7
   <8-9>  MnSymbolE8
   <9-10> MnSymbolE9
  <10-12> MnSymbolE10
  <12->   MnSymbolE12
}{}
\DeclareFontShape{OMX}{MnSymbolE}{b}{n}{
    <-6>  MnSymbolE-Bold5
   <6-7>  MnSymbolE-Bold6
   <7-8>  MnSymbolE-Bold7
   <8-9>  MnSymbolE-Bold8
   <9-10> MnSymbolE-Bold9
  <10-12> MnSymbolE-Bold10
  <12->   MnSymbolE-Bold12
}{}
\let\llangle\@undefined
\let\rrangle\@undefined
\DeclareMathDelimiter{\llangle}{\mathopen}%
                     {MnLargeSymbols}{'164}{MnLargeSymbols}{'164}
\DeclareMathDelimiter{\rrangle}{\mathclose}%
                     {MnLargeSymbols}{'171}{MnLargeSymbols}{'171}
\newcounter{desccount}
\newcommand{\descref}[1]{\hyperref[#1]{#1}}
\tikzset{every picture/.style=thick}
\tikzset{link/.style = { white, double = black, line width = 1.75pt, double distance = 1.25pt, looseness=1.75 }}
\tikzset{crossing/.style = {draw, circle, dotted, minimum size=0.5cm, inner sep=0, outer sep=0}}
\pgfplotsset{compat=1.12}
\begin{document}

%\setstretch{1.1}

\begin{abstract}
The A-polynomial of a knot is defined in terms of $\SL(2,\C)$ representations of the knot group, and encodes information about essential surfaces in the knot complement. In 2005, Dunfield--Garoufalidis and Boyer--Zhang proved that it detects the unknot using Kronheimer--Mrowka's work on the Property P conjecture. Here we use more recent results from instanton Floer homology to prove that a version of the A-polynomial detects whether a knot is a torus knot. We moreover completely determine which individual torus knots are detected by this A-polynomial.  These results enable progress towards a folklore conjecture about boundary slopes of non-torus knots.  Finally, we use similar ideas to prove that a knot in the 3-sphere admits infinitely many $\SL(2,\C)$-abelian Dehn surgeries if and only if it is a torus knot, affirming a variant of a conjecture due to Sivek--Zentner.
\end{abstract}

\maketitle

\section{Introduction}\label{sec:intro}

One of the most  fruitful ways to  probe the geometry and topology  of a 3-manifold $Y$ is to study the  space of homomorphisms from its fundamental group  to another group $G$, \[R_G(Y) = \mathrm{Hom}(\pi_1(Y),G).\]
However $R_G(Y)$ sometimes fails to capture  any more  information about $Y$ than does its first homology.  It is interesting to study this failure, which prompts the definition: a 3-manifold $Y$ is called \emph{$G$-abelian} if every homomorphism in $R_G(Y)$ has abelian image. These can be viewed as the simplest 3-manifolds from the perspective of the representation variety.

For example, a longstanding conjecture \cite[Problem 3.105]{kirby-list} which has received  recent attention \cite{lpcz,bs-stein} holds that $S^3$ is the only $\SU(2)$-abelian  homology 3-sphere. Zentner proved this  with $\SL(2,\C)$ in place of $\SU(2)$ in \cite{zentner}. There has also been considerable  interest in studying  $\SU(2)$-abelian Dehn surgeries on knots, as in  Kronheimer and Mrowka's work on the Property P conjecture and its offshoots
\cite{km-p,km-su2,bs-lspace,blsy}.

One of our two main results concerns a conjecture by Sivek and Zentner on this theme \cite{sz-pillowcase}, which asserts that  torus knots are the only  knots in $S^3$ with  infinitely many $\SU(2)$-abelian Dehn surgeries. We prove  this with $\SL(2,\C)$ in place of $\SU(2)$:

\begin{theorem}
\label{thm:SLabeliantorus}
Let  $K\subset S^3$ be a knot. Then $S^3_r(K)$ is $\SL(2,\C)$-abelian for infinitely many $r\in \Q$ if and only if $K$ is a torus knot.
\end{theorem}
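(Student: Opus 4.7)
My plan is to prove the two directions separately. For the forward direction, take $K = T_{a,b}$; by Moser's classification of Dehn surgeries on torus knots, $S^3_{p/q}(T_{a,b})$ is a lens space whenever $|p - abq| = 1$, i.e., at every slope of the form $r = ab \pm 1/n$ with $n \in \Z_{>0}$. Lens spaces have cyclic fundamental group, so every representation into $\SL(2,\C)$ has abelian image, and each of these infinitely many surgeries is $\SL(2,\C)$-abelian.

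For the backward direction, suppose $S^3_{r_n}(K)$ is $\SL(2,\C)$-abelian for infinitely many distinct $r_n = p_n/q_n$ in lowest terms, so that $|(p_n,q_n)| \to \infty$. An irreducible $\SL(2,\C)$ representation of $\pi_1(S^3_{r_n}(K))$ lifts to a non-abelian character of $\pi_1(S^3 \ssm K)$ whose meridian and longitude eigenvalues $M, L$ satisfy $M^{p_n} L^{q_n} = 1$. The hypothesis therefore forces, for every irreducible component $C$ of the non-abelian $\SL(2,\C)$ character variety of the knot complement, the image of $C$ under $(M, L)$ in $(\C^*)^2$ to intersect each curve $\{M^{p_n} L^{q_n} = 1\}$ only at reducible limit points, a bounded finite set depending on $C$ alone. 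The key step is to apply Bernstein--Khovanskii--Kushnirenko: if the Newton polygon of a defining polynomial for the image of $C$ were two-dimensional, the intersection count would grow linearly in $|(p_n,q_n)|$, eventually producing a non-reducible intersection---an irreducible $\SL(2,\C)$ representation of the surgery, contradicting the hypothesis. Hence each non-abelian component is cut out by a binomial $L^\alpha M^\beta = c$, and a parallel count applied to pairs of components shows they must all share a common slope; so the non-abelian part of the A-polynomial of $K$ is a product of binomials with a single common slope, exactly the form of a torus knot's A-polynomial.

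The main obstacle is the final step: passing from \emph{``the A-polynomial of $K$ has the shape of a torus knot A-polynomial''} to \emph{``$K$ is a torus knot.''} Since the A-polynomial is not a complete knot invariant, this inference is not automatic, and I would invoke the paper's main theorems on A-polynomial detection of torus knots, which are proved using the new instanton Floer homology techniques developed earlier in the paper, to close this gap. The forward direction via Moser and the Bernstein--Khovanskii--Kushnirenko reduction are comparatively routine; all of the essential difficulty is absorbed into those A-polynomial detection results.
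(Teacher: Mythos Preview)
Your strategy---show that an $\SL(2,\C)$-averse knot is thin, then invoke Theorem~\ref{thm:thin-torus}---is valid but is not the paper's route. The paper uses exactly this reduction for the $\PSL(2,\C)$ analogue (Theorem~\ref{thm:psl2-averse}) via Culler--Shalen seminorms, but those only produce characters with $\tr\rho(\mu^p\lambda^q)=\pm2$, yielding $\PSL$ rather than $\SL$ representations of the surgery; the paper explicitly flags the $\SL$ case as requiring more work and proves it in \S\ref{sec:SL} by a direct argument on satellites. After the 6-theorem rules out hyperbolic knots, Lemmas~\ref{lem:sl2-abelian-pinch}--\ref{lem:sl2-abelian-companion} show that for $K=P(C)$ both $P(U)$ and $C$ inherit $\SL(2,\C)$-averseness with $r(P(U))=r(K)=w^2 r(C)$, and then the same minimal-genus contradiction via Proposition~\ref{prop:torus-satellite-non-lspace} that drives Theorem~\ref{thm:thin-torus} finishes the job. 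The heart of the paper's argument is Lemma~\ref{lem:sl2-abelian-companion}, a three-case analysis extending representations across the companion torus according to whether $\rho(\mu_C)$ is diagonalizable or parabolic with eigenvalue $\pm1$. Your BKK argument sidesteps that lemma entirely by working with the eigenvalue condition $M^pL^q=1$ directly, which forces $\rho(\mu^p\lambda^q)=I$ on the nose rather than merely in $\PSL$; this is a genuine shortcut if the details hold.

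Two of those details deserve more care than your sketch gives them. First, the finite ``exceptional set'' on $\tilde V(K)$ is not just the reducible limit points: it must also include the finitely many points with $M=\pm1$, since there an irreducible representation can have parabolic (non-diagonal) peripheral restriction, and then $M^pL^q=1$ no longer implies $\rho(\mu^p\lambda^q)=I$. Second, BKK counts intersections with multiplicity, so to conclude that some intersection point escapes the finite exceptional set you need the local intersection multiplicity of $\tilde V(K)$ with $\{M^{p_n}L^{q_n}=1\}$ at each fixed exceptional point to stay bounded as $|(p_n,q_n)|\to\infty$ (which it does, away from finitely many slopes). Both are routine, but neither is captured by the phrase ``reducible limit points.''
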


This is ultimately a theorem about $\SL(2,\C)$-representation varieties of knot complements, as is  our next result  on the A-polynomial of Cooper, Culler, Gillet, Long, and Shalen.  Recall that the \emph{A-polynomial} of a knot $K \subset S^3$ \cite{ccgls} is a  polynomial $A_K(M,L) \in \Z[M,L]$ that describes the pairs of values taken by representations
\[ \pi_1(S^3\setminus K) \to \SL(2,\C) \]
on a meridian and longitude of $K$.  The representation variety always contains a component of abelian representations, contributing a factor of $L-1$; discarding this component, we get the \emph{enhanced A-polynomial}
\[ \tilde{A}_K(M,L) \in \Z[M,L], \] which is related to the original by \begin{equation}\label{eqn:run}A_K(M,L) = \operatorname{lcm}(\tilde{A}_K(M,L), L-1),\end{equation}
where $\operatorname{lcm}$ denotes the least common multiple.

The Newton polygon of $\tilde{A}_K(M,L)$ is a convex polygon $\cN(K)\subset\R^2$, and we say that $K$ is \emph{thin} if this polygon is contained in a line. Dunfield and Garoufalidis \cite{dunfield-garoufalidis} and Boyer and Zhang \cite{boyer-zhang} independently proved that $K$ is the unknot if and only if $\cN(K)$ is a point. Our other main result is a classification of all remaining thin knots:

\begin{theorem} \label{thm:thin-torus}
A knot $K \subset S^3$ is thin if and only if it is a torus knot.
\end{theorem}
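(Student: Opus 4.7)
The ``if'' direction follows from the classical computation $A_{T_{a,b}}(M,L) = (L-1)(1 + LM^N)$ for a positive integer $N$ depending on convention (e.g., $N=2ab$ in the CCGLS normalization). Hence $\tilde{A}_{T_{a,b}}(M,L) = 1 + LM^N$ has Newton polygon the line segment from $(0,0)$ to $(N,1)$, contained in a line.

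For the ``only if'' direction, suppose $K$ is thin. After multiplying $\tilde{A}_K$ by a unit $\pm M^a L^b$, I write $\tilde{A}_K(M,L) = f(M^p L^q)$ for coprime integers $p,q$ and a polynomial $f(x) = \prod_i(x-\zeta_i) \in \Z[x]$. Equivalently, every irreducible $\SL(2,\C)$-representation $\rho$ of $\pi_1(S^3\setminus K)$ sends the peripheral element $\gamma := \mu^p\lambda^q$ to a matrix with eigenvalues in the finite set $\{\zeta_i^{\pm 1}\}$. My plan is to leverage this constraint to produce infinitely many $\SL(2,\C)$-abelian Dehn surgeries on $K$ and then conclude $K$ is a torus knot via Theorem~\ref{thm:SLabeliantorus}.

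For a slope $r=p'/q'$, a non-abelian representation of $S^3_{p'/q'}(K)$ comes from an irreducible $\rho$ of the knot group with $\rho(\mu^{p'}\lambda^{q'}) = I$. At non-parabolic points of the character variety this reduces to the eigenvalue equation $M^{p'}L^{q'} = 1$, which combined with $M^p L^q = \zeta_i$ yields $M^{pq'-p'q} = \zeta_i^{q'}$. The crucial subtlety is that at \emph{boundary-parabolic} characters---where $\rho(\mu)$ and $\rho(\lambda)$ generate a parabolic rather than a diagonal subgroup of $\SL(2,\C)$---the eigenvalue equation is necessary but not sufficient; indeed $\rho(\mu^{p'}\lambda^{q'})$ may be a nontrivial unipotent, so $\rho$ fails to descend to the surgery. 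By Bezout, there is an infinite arithmetic progression of slopes $p'/q'$ with $|pq' - p'q| = 1$; on these, the algebraic solutions satisfy $M^{\pm 1} = \zeta_i^{q'}$, and for a suitable $\zeta_i$ (ideally $\pm 1$) the resulting $M$ is forced to be $\pm 1$, placing the putative representation at a parabolic character where it fails to descend. This would deliver the required infinite family of $\SL(2,\C)$-abelian surgeries.

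The main obstacle is twofold. First, one must show that some $\zeta_i$ is indeed a small root of unity, which I expect to follow from a specialization identity for $\tilde{A}_K$ at $(\pm 1, \pm 1)$ tied to finite-image $\SL(2,\C)$-representations of $\pi_1(S^3\setminus K)$ (such as those factoring through the $2$-fold branched cover). Second, the parabolic analysis must be carried out carefully, separating which algebraic intersections of $\{\tilde{A}_K = 0\}$ with $\{M^{p'}L^{q'}=1\}$ lift to genuine non-abelian representations of the surgery and which get absorbed into the boundary-parabolic stratum; this requires the local structure of the character variety at parabolic characters, in the style of Heusener--Porti, and goes beyond the purely algebraic data of $\tilde{A}_K$. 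Once the infinite family of $\SL(2,\C)$-abelian surgeries is in hand, Theorem~\ref{thm:SLabeliantorus} immediately yields that $K$ is a torus knot.
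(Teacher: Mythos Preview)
Your plan is to prove ``thin $\Rightarrow$ $\SL(2,\C)$-averse'' and then invoke Theorem~\ref{thm:SLabeliantorus}. There is no logical circularity---the paper proves Theorem~\ref{thm:SLabeliantorus} independently of Theorem~\ref{thm:thin-torus}---but the implication you need is where the real difficulty lies, and the two obstacles you flag are genuine gaps, not technicalities.

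First, to show that $S^3_{p'/q'}(K)$ is $\SL(2,\C)$-abelian you must rule out \emph{every} nonabelian representation, so you need control over \emph{all} of the roots $\zeta_i$ simultaneously, not ``a suitable $\zeta_i$.'' If even one $\zeta_i$ is not $\pm1$, then for generic $q'$ the resulting $M=\zeta_i^{\pm q'}$ is not $\pm1$, and you have no mechanism to exclude a diagonalizable irreducible representation at that point. Your hoped-for specialization identity does not give this. Second, you must also exclude \emph{reducible} nonabelian representations---upper-triangular ones coming from roots of $\Delta_K$---and these are invisible to $\tilde A_K$. Third, the boundary-parabolic stratum can itself be positive-dimensional in $X(E_K)$ even when its image in the $(M,L)$-plane is a point, so the Heusener--Porti style local analysis does not straightforwardly bound the set of slopes to which such characters descend.

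The paper sidesteps all of this by working with $\SU(2)$ instead. Every element of $\SU(2)$ is diagonalizable, so there is no parabolic case, and the pillowcase picture makes ``thin $\Rightarrow$ $\SU(2)$-averse with limit slope $r$'' straightforward (this is Proposition~\ref{prop:thin-basics}). From there, $\SU(2)$-averse $\Rightarrow$ instanton L-space knot (Corollary~\ref{cor:SUL}), and the paper then rules out thin satellites by a minimal-genus argument: Proposition~\ref{prop:thin-satellite} forces $P(U)$ and $C$ to be thin torus knots with $ab=w^2cd$, whereupon the Alexander-polynomial constraint of Proposition~\ref{prop:torus-satellite-non-lspace} shows $K$ cannot be an instanton L-space knot. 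Note that even if your reduction succeeded, the proof of Theorem~\ref{thm:SLabeliantorus} immediately passes from $\SL(2,\C)$-averse to $\SU(2)$-averse and runs the same instanton argument, so the $\SL(2,\C)$ detour would buy nothing.

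A minor point on the ``if'' direction: your formula for $\tilde A_{T_{a,b}}$ is not quite right; compare Example~\ref{ex:torus}, where the exponent on $L$ and the sign depend on whether $b=2$ or $b>2$. The conclusion that torus knots are thin is of course unaffected.
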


That is, $\tilde{A}_K(M,L)$ distinguishes torus knots from all non-torus knots. In addition, Theorem \ref{thm:thin-torus} enables us to completely determine which specific torus knots can be individually detected by the enhanced A-polynomial (in particular, it detects all $T_{2,2n+1}$ torus knots):

\begin{corollary}
\label{cor:detectionA}
 $\tilde{A}_K(M,L)$ detects the nontrivial torus knot $T_{a,b}$ if and only if:
\begin{itemize}
\item $|a|=2$ or $|b|=2$, or 
\item $|a|$ and $|b|$ are powers of different primes.\end{itemize} 
\end{corollary}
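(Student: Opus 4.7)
By Theorem \ref{thm:thin-torus}, any knot $K \subset S^3$ with $\tilde{A}_K = \tilde{A}_{T_{a,b}}$ is itself a torus knot, so the question of which torus knots $\tilde{A}$ detects reduces to comparing enhanced A-polynomials among torus knots. My plan is to (i) compute $\tilde{A}_{T_{p,q}}$ explicitly for each coprime pair $p,q \geq 2$, and (ii) use elementary number theory to characterize when $\tilde{A}_{T_{p,q}} = \tilde{A}_{T_{p',q'}}$ for some distinct coprime pair $\{p',q'\}$.

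For (i), I would work from the presentation $\pi_1(S^3 \setminus T_{p,q}) = \langle x, y \mid x^p = y^q\rangle$, noting that the Seifert longitude $\lambda$ satisfies $\lambda = z \cdot m^{-pq}$ in the peripheral subgroup, where $z := x^p = y^q$ is the central element and $m$ is the meridian (as one checks by comparing images in $H_1$). Any irreducible $\SL(2,\C)$-representation $\rho$ sends $z$ to $\pm I$, giving $\rho(\lambda) = \pm \rho(m)^{-pq}$ and hence a factor $LM^{pq} \mp 1$ in $\tilde{A}_{T_{p,q}}$. The case $\rho(z) = I$ requires non-central $p$-th and $q$-th roots of unity in $\SL(2,\C)$, which exist iff $p, q \geq 3$; the case $\rho(z) = -I$ always contributes irreducible representations. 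Therefore
\begin{align*}
\tilde{A}_{T_{2,q}}(M,L) &= LM^{2q} + 1 \quad (q \geq 3 \text{ odd}),\\
\tilde{A}_{T_{p,q}}(M,L) &= (LM^{pq} - 1)(LM^{pq} + 1) \quad (p, q \geq 3).
\end{align*}
The technical content is verifying that the irreducible character variety, in either case, projects onto exactly the single claimed curve in the $(M,L)$-plane without contributing extra factors---this is classical and is carried out in \cite{ccgls}.

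For (ii), observe that $\tilde{A}_{T_{p,q}}$ has $L$-degree $1$ when $\min\{p,q\} = 2$ and $L$-degree $2$ when $\min\{p,q\} \geq 3$, so a $T_{2,q}$-type polynomial can never coincide with a $T_{p,q}$-type polynomial with $p, q \geq 3$. Within the first family $\tilde{A}_{T_{2,q}}$ determines $q$ via the $M$-degree, so every $T_{2,q}$ is detected, accounting for the first bullet. Within the second family $\tilde{A}_{T_{p,q}} = L^2 M^{2pq} - 1$ depends only on the product $pq$, so detection is equivalent to $\{p,q\}$ being the unique coprime factorization of $pq$ into two factors each $\geq 3$. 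An elementary case analysis shows this uniqueness holds iff $p$ and $q$ are powers of two different primes: if $p$ (say) has two or more distinct prime factors, write $p = p_1 p_2$ with $p_1, p_2 \geq 2$ coprime, and then one of $\{p_1, p_2 q\}$, $\{p_2, p_1 q\}$ is a distinct coprime factorization of $pq$ with both parts $\geq 3$ (using $q \geq 3$ and the fact that the odd one of $p_1, p_2$ is at least $3$, since otherwise both would equal $2$ and $p$ would be a prime power).

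I expect the main obstacle to be the character-variety computation in step (i): confirming that $\rho(z) = \pm I$ exhausts the relevant contributions and that no additional irreducible factors arise from the multiplicity of components of the character variety (for each case there are several components, indexed by the possible eigenvalues of $\rho(x)$ and $\rho(y)$, but all must project onto the same curve in the $(M,L)$-plane). Once these formulas for $\tilde{A}_{T_{p,q}}$ are in hand, the remaining argument is routine bookkeeping.
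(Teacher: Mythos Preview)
Your approach is correct and essentially the same as the paper's: reduce to torus knots via Theorem~\ref{thm:thin-torus}, then use the explicit formulas for $\tilde{A}_{T_{p,q}}$ together with elementary number theory. The paper is more terse---it records the formulas in Example~\ref{ex:torus} (citing \cite[Lemma~2.1]{ni-zhang} rather than rederiving them from the presentation of $\pi_1$) and declares the number-theoretic consequence ``immediate'' as Corollary~\ref{cor:detectionB}---but the substance is identical. One small omission: you restrict to $p,q\geq 2$, whereas the corollary is stated with $|a|,|b|$, so you should also note (as is clear from the explicit formulas) that $\tilde{A}$ distinguishes each torus knot from its mirror.
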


To clarify the difference between these results, we remark that, for example, if \[\tilde{A}_K(M,L) = \tilde{A}_{T_{3,35}}(M,L)\] then Theorem~\ref{thm:thin-torus} says that $K$ must be a torus knot.  According to Corollary~\ref{cor:detectionA}, however, we cannot say specifically which torus knot $K$ is in this case; in fact, $K$ could also be either $T_{5,21}$ or $T_{7,15}$, both of which have the same enhanced A-polynomial as $T_{3,35}$. By contrast, if \[\tilde{A}_K(M,L) = \tilde{A}_{T_{3,25}}(M,L)\] then Corollary \ref{cor:detectionA} says that $K = T_{3,25}$ as both 3 and 25 are prime powers.

Corollary~\ref{cor:detectionA} greatly improves upon our previous work in \cite[Corollary~10.16]{bs-lspace}, which showed that the enhanced A-polynomial detects a much more limited set of torus knots.
Theorem \ref{thm:thin-torus} further implies that the combination of the enhanced A-polynomial and the degree of the symmetrized Alexander polynomial detects \emph{every} torus knot:

\begin{corollary}
\label{cor:detectionAdeg}
 The pair $\big(\tilde{A}_K(M,L), \deg \Delta_K(t)\big)$ detects every torus knot. 
\end{corollary}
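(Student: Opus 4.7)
The plan is to combine Theorem~\ref{thm:thin-torus} with the explicit formulas for the enhanced A-polynomial and Alexander polynomial of torus knots, reducing the claim to two elementary symmetric equations. Suppose $K \subset S^3$ satisfies $\tilde{A}_K(M,L) = \tilde{A}_{T_{a,b}}(M,L)$ and $\deg \Delta_K(t) = \deg \Delta_{T_{a,b}}(t)$. Since $\tilde{A}_K$ equals the enhanced A-polynomial of a torus knot, its Newton polygon is contained in a line, so Theorem~\ref{thm:thin-torus} forces $K$ to be a torus knot $T_{c,d}$. It therefore suffices to show that if two torus knots $T_{a,b}$ and $T_{c,d}$ share the same enhanced A-polynomial and the same Alexander polynomial degree, then $\{|a|,|b|\} = \{|c|,|d|\}$.

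The next step is to read $|ab|$ off of $\tilde{A}_{T_{a,b}}$. The enhanced A-polynomial of a torus knot is computed explicitly in \cite{ccgls}, and its Newton polygon is a line segment of rational slope $2ab$, reflecting the fact that the only nonmeridional boundary slope of an essential surface in $S^3 \setminus T_{a,b}$ is $ab$ (the slope of the cabling annulus). Consequently $\tilde{A}_{T_{a,b}} = \tilde{A}_{T_{c,d}}$ immediately yields $|ab| = |cd|$. Combining this with the standard formula $\deg \Delta_{T_{p,q}}(t) = (|p|-1)(|q|-1) = |pq| - |p| - |q| + 1$ and matching Alexander degrees gives $|a| + |b| = |c| + |d|$. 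Two unordered pairs of positive integers with a common sum and common product must coincide, whence $\{|a|,|b|\} = \{|c|,|d|\}$ and $T_{c,d} = T_{a,b}$.

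The only substantive ingredient beyond Theorem~\ref{thm:thin-torus} is the recovery of $|ab|$ from $\tilde{A}_{T_{a,b}}$, and this is already implicit in the analysis underlying Corollary~\ref{cor:detectionA}, where the failure of $\tilde{A}$-detection arises precisely from distinct coprime pairs with a common product. I therefore expect no real obstacle; the remaining step is a purely elementary consequence of matching elementary symmetric polynomials in two variables.
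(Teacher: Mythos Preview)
Your approach is essentially the same as the paper's: reduce to torus knots via Theorem~\ref{thm:thin-torus}, extract the product $ab$ from $\tilde{A}_{T_{a,b}}$, extract $(|a|-1)(|b|-1)$ from $\deg\Delta_{T_{a,b}}$, and solve the resulting symmetric system. The paper's proof is exactly this, phrased slightly differently.

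There is one small gap in your write-up. You only extract $|ab| = |cd|$ from the equality of enhanced A-polynomials, and then conclude $\{|a|,|b|\} = \{|c|,|d|\}$. But this determines the torus knot only up to mirroring: $T_{a,b}$ and $T_{-a,b}$ have the same $\{|a|,|b|\}$ yet are distinct knots when $|a|,|b|\geq 2$. To close the argument you need the signed equality $ab = cd$, and this is indeed available: the Newton polygon of $\tilde{A}_{T_{a,b}}$ is a segment of slope $ab$ (not $2ab$, under the paper's convention that the monomial $M^bL^a$ corresponds to the lattice point $(a,b)$), and the explicit formulas in Example~\ref{ex:torus} show directly that $\tilde{A}_{T_{a,b}} \neq \tilde{A}_{T_{-a,b}}$ for nontrivial $T_{a,b}$. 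Once you record $ab = cd$ rather than $|ab| = |cd|$, your final step gives $\{a,b\} = \{c,d\}$ (after the standard normalization $a>0$) and hence $T_{c,d} = T_{a,b}$, matching the paper's proof exactly.
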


This is a substantial strengthening of results by Ni and Zhang, who showed in \cite{ni-zhang} that the combination of the enhanced A-polynomial and \emph{knot Floer homology} detects each torus knot. Among other things, they use the fact that knot Floer homology detects fibered knots to argue that a knot with the same combination of these invariants as a torus knot is fibered. By contrast, we know directly that thin knots are fibered via our work on instanton L-space surgeries \cite{bs-lspace}. We only use the degree of the Alexander polynomial (which is encoded by but is a much weaker invariant than knot Floer homology) to distinguish torus knots $T_{a,b} \neq T_{c,d}$ with $ab=cd$, which $\tilde{A}_K(M,L)$ cannot generally do by itself, per Corollary~\ref{cor:detectionA}.

While Theorem \ref{thm:thin-torus} shows that the \emph{enhanced} A-polynomial distinguishes torus knots from all other knots, it does not quite prove that the  A-polynomial does. Indeed, if $A_{K}(M,L) = A_{T_{a,b}}(M,L)$ then \eqref{eqn:run} implies that either 
\[\tilde{A}_K(M,L) = \tilde{A}_{T_{a,b}}(M,L) \,\textrm{ or }\, \tilde{A}_K(M,L) = (L-1)\tilde{A}_{T_{a,b}}(M,L).\]
In the first case, Theorem \ref{thm:thin-torus} implies that $K$ is a torus knot. The  problem is that  we cannot rule out the second case at present. We therefore pose the following:

\begin{question}
\label{ques:unreduced}
Does $A_{K}(M,L)$ distinguish torus knots from all other knots?
\end{question}

One of the most powerful features of the A-polynomial is its relationship with boundary slopes of incompressible surfaces in the knot complement. We describe below an application of Theorem \ref{thm:thin-torus} along these lines, after a bit of background and motivation.

Recall that a \emph{boundary slope} for a knot is an isotopy class of curves  on the boundary of a tubular neighborhood of the knot arising as the intersection of the boundary torus with an incompressible surface in the knot complement. It was shown in \cite{ccgls} that the slope of each side of the Newton polygon of $A_K$ is a boundary slope for $K$.

For instance, a nontrivial torus knot $T_{a,b}$ has two boundary slopes: the cabling slope $ab$ and the Seifert slope 0. This is reflected in the fact that $\cN(T_{a,b})$ is a line segment of slope $ab$, which implies that the Newton polygon of its A-polynomial is a parallelogram  with sides of slopes $ab$ and $0$, where the slope $0$ sides come from the  extra  $L-1$ factor. This example motivates the following strengthening of Question \ref{ques:unreduced}:

\begin{question}
\label{ques:parallelogram} Is it true that $K$ is a nontrivial torus knot if and only if the Newton polygon of $A_K(M,L)$ is a parallelogram?
\end{question}

A longstanding folklore conjecture asserts that nontrivial torus knots are in fact the \emph{only} knots in $S^3$ with two boundary slopes. This would follow from an affirmative answer to Question \ref{ques:parallelogram}.
With Theorem \ref{thm:thin-torus}, we are able to make the following partial progress:

\begin{theorem}
\label{thm:slopes}
If $K\subset S^3$ is a nontrivial fibered knot but not a torus knot, then either:
\begin{itemize}
\item there is a connected, separating, incompressible surface in the complement of $K$ which meets the boundary torus in curves of slope 0, or
\item $K$ has at least 3 boundary slopes.
\end{itemize}
\end{theorem}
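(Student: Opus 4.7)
The plan is to combine Theorem~\ref{thm:thin-torus}, which controls the shape of $\cN(\tilde{A}_K)$, with the CCGLS theorem \cite{ccgls} that every edge slope of the Newton polygon $\cN(A_K)$ is a boundary slope of $K$. Since $L-1$ divides $A_K$, we may write $A_K = (L-1)^{\epsilon}\tilde{A}_K$ for some $\epsilon\in\{0,1\}$, and in either case $\cN(A_K)$ has at least one horizontal edge. Because $K$ is not a torus knot, Theorem~\ref{thm:thin-torus} implies that $\cN(\tilde{A}_K)$ is a $2$-dimensional convex polygon; let $S$ denote its set of distinct edge slopes, so $|S|\ge 2$. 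The set of edge slopes of $\cN(A_K)$ is then exactly $S\cup\{0\}$.

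If $|S|\ge 3$, or if $|S|=2$ and $0\notin S$, then $|S\cup\{0\}|\ge 3$ and \cite{ccgls} gives at least three boundary slopes, yielding the second alternative. The only remaining case is $|S|=2$ with $0\in S$, i.e.\ $S=\{0,s\}$ for some $s\neq 0$: in this case $\cN(\tilde{A}_K)$ is a parallelogram with horizontal edges and slanted edges of slope $s$. Here the task is to produce the first alternative, namely a connected, separating, incompressible surface in $S^3\setminus K$ of boundary slope $0$.

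To produce such a surface I would apply the Culler--Shalen machinery underlying \cite{ccgls} at an ideal point of the $\SL(2,\C)$-character variety corresponding to a horizontal edge of $\cN(\tilde{A}_K)$. Since this edge is contributed by the non-abelian factor $\tilde{A}_K$ rather than by $L-1$, the ideal point is non-abelian and the resulting essential surface $F$ is not merely a parallel copy of the fiber. The Bass--Serre action of $\pi_1(S^3\setminus K)$ on a tree at this ideal point splits $\pi_1$ either as an amalgamated product (giving a separating surface) or as an HNN extension (non-separating), and the plan is to rule out the HNN alternative.

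This last geometric step is the main obstacle. My expectation is that when $K$ is hyperbolic, Thurston's theorem on incompressible surfaces in a fibered $3$-manifold with pseudo-Anosov monodromy forces every connected incompressible slope-$0$ surface to be the fiber, so either the non-abelian ideal point above must yield a separating surface or this subcase is ruled out by a direct contradiction. When $K$ is non-hyperbolic, it is a satellite knot, and one should produce the desired connected separating slope-$0$ surface from the JSJ decomposition of $S^3\setminus K$, for instance by gluing a partial Seifert surface of the companion to a suitable slope-$0$ surface in the pattern solid torus.
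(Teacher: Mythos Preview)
Your reduction to the key case is essentially the paper's: once $K$ is not a torus knot, Theorem~\ref{thm:thin-torus} forces $\cN(\tilde{A}_K)$ to be two-dimensional, so its edge slopes together with $0$ give at least three boundary slopes unless $\cN(\tilde{A}_K)$ already has $0$ as an edge slope and only one other slope. In that remaining case you correctly look at an ideal point associated to the horizontal edge of $\tilde{A}_K$. The paper's justification that the resulting surface is not a fiber is slightly different from yours: rather than invoking ``non-abelianness'' of the ideal point, it notes that $\pi_1(S)$ stabilizes an edge of the Bass--Serre tree and cites \cite[Proposition~1.2.7]{cgls} to conclude that $\pi_1(S)$ is not normal in $\pi_1(E_K)$, hence $S$ is not a fiber. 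You should use this argument; ``the ideal point is non-abelian'' is suggestive but not a proof.

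The genuine gap is your last step, ruling out the HNN (nonseparating) alternative. Your proposed case split does not work as stated. In the hyperbolic case, there is no off-the-shelf ``Thurston theorem'' that says every incompressible slope-$0$ surface in a pseudo-Anosov punctured-surface bundle is a fiber; Thurston's results are about closed incompressible surfaces in closed fibered hyperbolic manifolds, and the bounded case needs its own argument. In the satellite case, the JSJ tori are closed and do not meet $\partial E_K$ at all, and your suggestion of gluing a companion Seifert surface to something in the pattern solid torus has no mechanism to guarantee incompressibility or separation, and could easily just reproduce the fiber. The paper avoids this case split entirely by proving a single uniform lemma (Proposition~\ref{prop:nonseparating-surface-in-fibered-exterior}): in any fibered knot exterior, a connected, orientable, nonseparating, incompressible surface with longitudinal boundary is isotopic to the fiber. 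The proof lifts the surface to the infinite cyclic cover $\tilde{E}\cong F\times\R$, uses Nielsen's theorem to show the projection to $F$ is homotopic to a covering map, checks via homology that this cover has degree one, and then applies Waldhausen to upgrade the homotopy to an isotopy. With this lemma in hand, the strict slope-$0$ surface from your ideal point cannot be nonseparating, and you are done.
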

We note that the first possibility in Theorem~\ref{thm:slopes} certainly can occur: for example, if $K$ is a knot of genus at least $2$ and $S^3_0(K)$ is toroidal, then the incompressible torus in $S^3_0(K)$ must be separating by \cite[Corollary~8.3]{gabai-foliations3}, so its intersection with $S^3 \setminus N(K)$ is such a surface.  As examples we can take the pretzel knots $K = P(2,n,-n)$ for any odd $n\geq 3$, as these are fibered of genus $n-1$ \cite[Theorem~6.7, Case~2B]{gabai-fibred} and have toroidal zero-surgery \cite[Theorem~1.1(2)]{wu}.

Finally, we note that the A-polynomial is defined for knots in arbitrary 3-manifolds, and ask whether our main theorems have analogues in this more general setting:

\begin{question}
Do analogues of Theorems \ref{thm:SLabeliantorus} and  \ref{thm:thin-torus} hold for knots in other 3-manifolds?
\end{question}

Suppose, for instance, that  $K\subset Y$ is a knot with irreducible complement.  Is it true that $K$ is thin, or admits infinitely many $\SL(2,\C)$-abelian surgeries, if and only if its complement is Seifert fibered over the disk with at most two singular fibers?

\subsection{Proof sketch}

The proofs of Theorems \ref{thm:SLabeliantorus} and \ref{thm:thin-torus} are similar in spirit; we hint here at our proof of the latter. Very briefly, suppose that $K\subset S^3$ is a nontrivial thin knot. Then $K$ is not hyperbolic \cite{ccgls}, and Sivek--Zentner prove in \cite{sz-pillowcase} that $K$ admits infinitely many $\SU(2)$-abelian surgeries (Proposition \ref{prop:thin-basics}). It then follows from a combination of Sivek--Zentner's work and our own in \cite{bs-stein,bs-lspace} that $K$ is an instanton L-space knot and hence fibered (Theorems \ref{thm:bs-lspace} and \ref{thm:sz-averse}). Moreover, our work in \cite{bs-trefoil}, combined with newer work of Li--Ye \cite{li-ye-surgery} on the surgery formula in framed instanton homology, places strong restrictions on the coefficients of the Alexander polynomial of $K$ (Corollary~\ref{cor:lspace-top-two}). We use the above results to prove that $K$ is not a satellite knot: if it were, then we show that there would be another thin satellite knot whose pattern embeds in $S^3$ as a nontrivial torus knot, but we prove that any such knot would violate the Alexander polynomial constraints (Proposition~\ref{prop:torus-satellite-non-lspace}). Therefore, since $K$ is neither hyperbolic nor a satellite, $K$ must be a torus knot.

Even though our methods largely involve instanton gauge theory, we are unable to prove Theorems~\ref{thm:SLabeliantorus} and \ref{thm:thin-torus} with $\SU(2)$ in place of $\SL(2,\C)$.  The key obstacle to doing so is that many hyperbolic knots (such as the $(-2,3,7)$ pretzel) are instanton L-space knots.  In such cases instanton homology doesn't say anything about whether large surgeries on these knots are $\SU(2)$-abelian, but we do know that large surgeries are hyperbolic \cite{thurston-notes} and so we at least get faithful $\SL(2,\C)$ representations \cite{cs-splittings}.  (See Proposition~\ref{prop:thin-basics} and Lemma~\ref{lem:sl2-averse-hyperbolic}.)  If we knew the $\SU(2)$ version of each theorem for hyperbolic knots, then our arguments for satellite knots would apply verbatim with $\SU(2)$ in place of $\SL(2,\C)$.

\subsection{Organization}
In \S\ref{sec:prelim}, we provide a review of instanton L-spaces, $\SU(2)$-averse knots, and the A-polynomial, and prove some  preliminary results. In \S\ref{sec:satellites}, we prove  new results about instanton L-space knots which are  satellites. In \S\ref{sec:thin}, we apply these results  to prove Theorem \ref{thm:thin-torus} and Corollaries \ref{cor:detectionA}, \ref{cor:detectionAdeg}, and Theorem \ref{thm:slopes}. We also prove as a consequence of Theorem \ref{thm:thin-torus} that only torus knots are $\PSL(2,\C)$-averse. In \S\ref{sec:SL}, we prove Theorem \ref{thm:SLabeliantorus}.

\subsection{Acknowledgments} We thank Nathan Dunfield and Xingru Zhang for helpful correspondence, and the referee for helpful comments on the initial version of this paper. The first author was supported by NSF CAREER Grant DMS-1454865.

\section{Preliminaries}\label{sec:prelim}

In this section we provide  background  and establish some preliminary results which will be important in our proofs of both Theorems \ref{thm:SLabeliantorus} and \ref{thm:thin-torus}. Here and throughout, we adopt the standard convention that the unknot is a torus knot but  not a satellite knot. Moreover, when writing that $K=P(C)$ is a satellite knot with pattern $P\subset S^1\times D^2$ and companion $C\subset S^3$, we assume that $C$ is nontrivial and that $P$ is not a core of the solid torus.

\subsection{Instanton L-space knots} Recall that a  3-manifold $Y$ is called an \emph{instanton L-space} if it is a rational homology 3-sphere and its \emph{framed instanton homology} $\Is(Y)$,  defined by Kronheimer and Mrowka in \cite{km-yaft}, satisfies \[\operatorname{rank}\Is(Y) = |H_1(Y)|.\] A knot $K\subset S^3$ is called an \emph{instanton L-space knot} if the Dehn surgery  $S^3_r(K)$ is an instanton L-space for some positive $r\in\Q$. We proved the following in {\cite[Theorem~1.15]{bs-lspace}}:

\begin{theorem}[] \label{thm:bs-lspace}
If $K \subset S^3$ is a nontrivial instanton L-space knot then:
\begin{itemize}
\item $K$ is fibered and strongly quasipositive, and 
\item $S^3_r(K)$ is an instanton L-space  if and only if $r \geq 2g(K)-1$.
\end{itemize}
\end{theorem}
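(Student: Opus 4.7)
The plan is to combine a surgery exact triangle for $\Is$ with genus and fiberedness detection by instanton knot homology $\KHI$, paralleling the Heegaard Floer proofs of Ozsv\'ath--Szab\'o and Ghiggini in the Heegaard Floer setting.

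First, I would use a surgery exact triangle relating $\Is(S^3)$, $\Is(S^3_n(K))$, and $\Is(S^3_{n+1}(K))$ for large integers $n$, to show that the L-space assumption cascades: if $S^3_r(K)$ is an instanton L-space for some positive $r$, then $S^3_n(K)$ is an instanton L-space for all sufficiently large integers $n$, and moreover the ranks of intermediate surgeries are controlled from above by the coefficients of the symmetrized Alexander polynomial. This should force each Alexander-graded summand $\KHI(K,i)$ to have rank at most one. Matching total rank and Euler characteristic against the symmetrized Alexander polynomial then pins down the precise set of slopes $r$ for which $S^3_r(K)$ is an instanton L-space, which should come out to $[2g(K)-1,\infty)\cap\Q$.

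Second, for fiberedness, I would combine the rank-one structure just obtained at the top Alexander grading with the fact that $\KHI$ detects the Seifert genus (Kronheimer--Mrowka) and detects fibered knots via $\dim\KHI(K,g(K))=1$ (Baldwin--Sivek). Since the argument above gives $\dim\KHI(K,g(K))\leq 1$, and a nontrivial symmetric Alexander polynomial forces $\dim\KHI(K,g(K))\geq 1$ at the topmost nontrivial grading, fiberedness follows immediately.

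Third, for strong quasipositivity, I would invoke the contact-type class in $\Is$ associated to the supporting open book of the fibered knot $K$, together with a surgery argument: positive L-space surgery on a fibered knot forces this contact invariant to be nonzero, and Hedden's characterization of strongly quasipositive fibered knots (transported into the instanton setting via the open book--contact correspondence in $\KHI$) then gives the conclusion.

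The hardest step is establishing the rank-one structure on $\KHI(K)$ together with the sharp value $2g(K)-1$ of the L-space surgery threshold. Framed instanton homology lacks the absolute $\Z$-grading and the full bigraded mapping cone formula available in Heegaard Floer theory, so the instanton surgery machinery has to be implemented with enough care to separate Alexander gradings, track Euler characteristics against the Alexander polynomial, and rule out exotic rank contributions at slopes slightly below $2g(K)-1$ rather than merely at some weaker finite threshold.
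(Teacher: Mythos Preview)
This theorem is not proved in the present paper at all: it is quoted verbatim as \cite[Theorem~1.15]{bs-lspace}, with no argument given here. So there is no ``paper's own proof'' to compare your proposal against; the authors are simply importing a result from their earlier work.

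That said, your outline is a fair sketch of the strategy actually carried out in \cite{bs-lspace}. The ingredients you name---a surgery exact triangle for $\Is$, genus detection for $\KHI$ (Kronheimer--Mrowka), fiberedness detection via $\dim\KHI(K,g)=1$, and a contact-class argument to upgrade ``fibered'' to ``strongly quasipositive''---are indeed the ones used there. Your assessment of the hardest step is also accurate: the absence of an absolute $\Z$-grading and a full mapping-cone formula in the instanton setting means that isolating the Alexander-graded pieces and pinning down the exact threshold $2g(K)-1$ requires substantial additional work (decomposing $\Is$ of surgeries along eigenspaces of cobordism maps, and a careful rank count), and this is where most of the effort in \cite{bs-lspace} goes. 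One small correction: the inequality $\dim\KHI(K,g(K))\geq 1$ does not come from the Alexander polynomial being nontrivial (an L-space knot could in principle have trivial Alexander polynomial a priori); it comes directly from genus detection, which says $\KHI(K,g(K))\neq 0$ for any knot of genus $g(K)$.
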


\noindent Further restrictions on instanton L-space knots come from  \emph{instanton knot homology}
\[ \KHI(K) = \bigoplus_{i=-g(K)}^{g(K)} \KHI(K,i) \]
defined in \cite{km-excision}, whose graded Euler characteristic agrees (up to sign) with the Alexander polynomial, \[\sum_i \chi\big(\KHI(K,i)\big)\cdot t^i=\pm \Delta_K(t),\]  by \cite{km-alexander,lim}.  (Here and throughout the paper we always use $\Delta_K(t)$ to mean the \emph{symmetrized} Alexander polynomial, satisfying $\Delta_K(t) = \Delta_K(t^{-1})$ and $\Delta_K(1)=1$.)  In particular, Li and Ye proved the following in {\cite[Theorem~1.9]{li-ye-surgery}}:

\begin{theorem}[] \label{thm:ly-lspace}
If $K \subset S^3$ is an instanton L-space knot then 
\[ \operatorname{rank} \KHI(K,j) = 0\text{ or }1 \]
for each $j$.  The coefficients of the Alexander polynomial $\Delta_K(t)$ therefore all lie in $\{-1,0,1\}$. Moreover, the nonzero coefficients alternate in sign.
\end{theorem}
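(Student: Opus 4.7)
The plan is to adapt Ozsv\'ath and Szab\'o's classical argument for Heegaard Floer L-space knots to the instanton setting, relying on Li and Ye's surgery formula for framed instanton homology.

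First, I would invoke Li--Ye's instanton analogue of the large surgery formula. For positive integers $n$ large compared to $g(K)$, this formula expresses $\Is(S^3_n(K))$ as the homology of a mapping cone built from filtered subcomplexes of $\KHI(K)$, with one summand for each of the $n$ pieces in the natural decomposition of $\Is(S^3_n(K))$. Each Alexander grading $j$ contributes to exactly one such summand in a controlled way, and the rank of that summand admits a lower bound increasing with $\dim\KHI(K,j)$.

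Second, I would combine this with the L-space hypothesis. Theorem~\ref{thm:bs-lspace} gives that $S^3_n(K)$ is an instanton L-space whenever $n\geq 2g(K)-1$, so $\dim\Is(S^3_n(K)) = n$ for all such $n$. This equality places stringent constraints on the mapping cone: each individual summand must contribute exactly $1$ to the total rank, which in turn should force $\dim\KHI(K,j)\leq 1$ for every $j$. Combined with the Kronheimer--Mrowka/Lim identity
\[
\sum_i \chi\bigl(\KHI(K,i)\bigr)\cdot t^i = \pm\,\Delta_K(t),
\]
this yields that every coefficient of $\Delta_K(t)$ lies in $\{-1,0,1\}$.

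Third, for the alternating sign claim: once $\dim\KHI(K,j)\in\{0,1\}$ for every $j$, the only way the mapping cone can assemble these rank-one groups into a rank-$n$ L-space is if the connecting maps between consecutive nonzero Alexander gradings are isomorphisms onto a common one-dimensional subspace. Because these maps shift the relative $\mathbb{Z}/2$-grading by one, they force $\chi(\KHI(K,j))$ to alternate in sign on the set of Alexander gradings for which $\KHI(K,j)$ is nonzero.

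The main obstacle is step one: establishing the instanton surgery formula in enough generality to carry through this argument. In Heegaard Floer homology the analogous formula rests on the staircase structure of the knot Floer complex and the fine control one has over the filtered differentials, which in turn is what enables the Ozsv\'ath--Szab\'o rigidity argument in the L-space case. The instanton analogue has only recently become available through the work of Li and Ye, and adapting the rigidity argument further requires showing that the instanton filtered complex is sufficiently well-behaved under the L-space constraint; this is essentially the technical heart of their proof.
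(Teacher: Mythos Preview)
The paper does not actually prove this statement: Theorem~\ref{thm:ly-lspace} is quoted verbatim from Li and Ye \cite[Theorem~1.9]{li-ye-surgery} and is used as a black box throughout. There is therefore no ``paper's own proof'' to compare your proposal against.

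That said, your outline is a reasonable high-level description of the strategy one expects Li and Ye to follow, namely the transplantation of the Ozsv\'ath--Szab\'o rigidity argument for Heegaard Floer L-space knots into the instanton setting via a large-surgery formula. A couple of cautions are in order. First, the instanton ``mapping cone'' is not literally the same object as in Heegaard Floer homology; Li and Ye work with bent complexes and bypass maps rather than with a filtered chain complex of the $\mathit{CFK}^\infty$ type, so your description of the mechanism in steps one and three is more of an analogy than an accurate summary. Second, your justification for the alternating signs---that connecting maps between consecutive nonzero gradings shift the $\Z/2$-grading---is heuristic; the actual argument requires control over the absolute $\Z/2$-gradings of the relevant generators, and in instanton homology this control is subtler than in the Heegaard Floer case. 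You correctly flag that the technical heart lies in establishing the surgery formula and the requisite structural results on the instanton knot complex; since the present paper simply imports Li and Ye's theorem, those details are entirely outside its scope.
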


Combined with our previous work in \cite{bs-trefoil}, this yields the corollary below:

\begin{corollary} \label{cor:lspace-top-two}
If $K\subset S^3$ is an instanton L-space knot of genus $g \geq 1$, then the Alexander polynomial of $K$ has leading terms 
\[ \pm \Delta_K(t) = t^g - t^{g-1} + \ldots, \]
where the omitted terms all have degree at most $g-2$.
\end{corollary}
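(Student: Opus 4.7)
The proof would combine the two preceding theorems in this section with one additional input from \cite{bs-trefoil}. My plan has three steps.

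First, since $g \geq 1$ the knot $K$ is nontrivial, so by Theorem \ref{thm:bs-lspace} it is fibered and therefore $\Delta_K(t)$ is monic of degree $g$. After replacing $\Delta_K$ by $-\Delta_K$ if necessary, we may write
\[ \Delta_K(t) = t^g + a_{g-1}t^{g-1} + a_{g-2}t^{g-2} + \ldots + a_{-g}t^{-g}. \]

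Second, I would apply Theorem \ref{thm:ly-lspace}: every coefficient $a_j$ lies in $\{-1,0,1\}$, and the nonzero coefficients strictly alternate in sign. Since the leading coefficient is $+1$, the next nonzero coefficient below $t^g$ (if any) must equal $-1$. In particular $a_{g-1}\in\{0,-1\}$, so the entire content of the corollary is to exclude $a_{g-1}=0$.

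Third, to rule out $a_{g-1}=0$ I would invoke the non-vanishing statement from our earlier work \cite{bs-trefoil} that for any instanton L-space knot of genus $g\geq 1$, the next-to-top summand $\KHI(K,g-1)$ is nontrivial. Via the Euler characteristic identification $\sum_i \chi(\KHI(K,i))\cdot t^i = \pm\Delta_K(t)$ from \cite{km-alexander,lim} together with the rank bound $\operatorname{rank}\KHI(K,g-1)\leq 1$ supplied by Theorem \ref{thm:ly-lspace}, this non-vanishing forces $|a_{g-1}|=1$, and combining with the alternation conclusion of the previous paragraph gives $a_{g-1}=-1$.

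The only non-routine step in this plan is the third one. Theorems \ref{thm:bs-lspace} and \ref{thm:ly-lspace} by themselves are entirely compatible with Alexander polynomials of the shape $t^g - t^{g-2}+\ldots$, so some genuinely positive input beyond the rank bound is needed to prevent the next-to-top coefficient from vanishing; that input is exactly what is provided by \cite{bs-trefoil}, and is the crux of the corollary.
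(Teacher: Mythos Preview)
Your proposal is correct and follows essentially the same route as the paper's proof: fiberedness gives the leading $t^g$, the Li--Ye constraints force $a_{g-1}\in\{0,-1\}$, and the input from \cite{bs-trefoil} rules out $a_{g-1}=0$. One small inaccuracy worth fixing: the relevant result in \cite{bs-trefoil} (Theorem~1.7 there) asserts $\operatorname{rank}\KHI(K,g-1)\geq 1$ for any \emph{nontrivial fibered} knot, not specifically for instanton L-space knots; you should state it that way and then note that it applies here because $K$ is fibered by Theorem~\ref{thm:bs-lspace}.
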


\begin{proof}
Theorem~\ref{thm:bs-lspace} says that $K$ is fibered, from which it follows that \[ \operatorname{rank} \KHI(K,g) = 1 \]
by \cite[Corollary~7.19]{km-excision}.  Since $K$ is a nontrivial fibered knot, we also know from \cite[Theorem~1.7]{bs-trefoil} that
\[ \operatorname{rank} \KHI(K,g-1) \geq 1, \]
 in which case Theorem~\ref{thm:ly-lspace} says that \[\operatorname{rank} \KHI(K,g-1)=1,\] and so the $t^{g-1}$-coefficient of $\Delta_K(t)$ is $\pm1$.  Since the nonzero coefficients of $\Delta_K(t)$ alternate in sign, the signs of the $t^g$- and $t^{g-1}$-coefficients must be $+1$ and $-1$ in some order.
\end{proof}

We will be particularly interested in instanton L-space knots which are satellites, and we study these in more detail in \S\ref{sec:satellites}. Here, we establish the following preliminary lemma:

\begin{lemma} \label{lem:satellite-lspace}
Let $K = P(C)$ be a  satellite knot with pattern $P$ and companion $C$, and let $w \geq 0$ denote the winding number of $P$.  If $K$ is an instanton L-space knot, then:
\begin{itemize}
\item $w \geq 1$,
\item $P(U)$ is not the unknot,
\item $P(U)$ and $C$ are both fibered.
\end{itemize}
\end{lemma}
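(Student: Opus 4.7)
The plan is to first invoke Theorem~\ref{thm:bs-lspace}, which gives that $K$ is fibered; this underpins all three claims. Let $\pi : E \to S^1$ denote the fibration of the knot exterior $E = S^3 \setminus \mathrm{int}(\nu(K))$, with fiber $F$, and let $T = \partial\nu(C)$ be the companion torus. Because $C$ is nontrivial and $P$ is not a core, $T$ is incompressible in $E$; because $T$ is closed while $F$ has boundary, $T$ cannot be isotopic to a fiber. After an isotopy I arrange that $T$ is transverse to $\pi$.

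For the bound $w \geq 1$, the key observation is that the oriented intersection $F \cap T$ represents the class $\pm w \cdot [\lambda_C]$ in $H_1(T;\mathbb{Z})$, since $F$ has linking number $w$ with $C$. A taut-surface / minimal-intersection argument (ruling out nullhomotopic components on $T$ and canceling pairs of oppositely oriented essential components) then forces $F \cap T = \emptyset$ whenever $w = 0$. In that case $T$ lies in a product region $\pi^{-1}(S^1 \setminus \{\pi(F)\}) \cong F \times (0,1)$, so the injection $\pi_1(T) \hookrightarrow \pi_1(E)$ (from incompressibility of $T$) factors through $\pi_1(F \times (0,1)) = \pi_1(F)$. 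But $\pi_1(F)$ is free (since $F$ has boundary), and no free group contains $\mathbb{Z}^2 = \pi_1(T)$, a contradiction.

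Once $T$ is transverse to $\pi$, the restrictions of $\pi$ to the two JSJ pieces $E(C) = S^3 \setminus \mathrm{int}(\nu(C))$ and $\nu(C) \setminus \mathrm{int}(\nu(K))$ give rise to fibrations of each piece over $S^1$. Since $H^1(E(C);\mathbb{Z}) = \mathbb{Z}$, any fibration of $E(C)$ is generated by the primitive class, whose fiber is a Seifert surface of $C$; hence $C$ is fibered. Similarly the pattern piece fibers with fiber a Seifert-type surface for $P$ in $V = \nu(C)$ meeting $\partial V$ in longitudinal curves, and this extends across the standard complementary solid torus $W \subset S^3$ (whose meridian equals $\lambda_V$) by adjoining $w$ meridional disks per fiber. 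The result is a fibration of $S^3 \setminus \nu(P(U))$ with connected fiber, so $P(U)$ is fibered.

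Finally, for the claim that $P(U)$ is not the unknot, suppose by contradiction that $P(U) = U$. The satellite formula then gives $\Delta_K(t) = \Delta_C(t^w)$, whose nonzero coefficients occur only in degrees divisible by $w$. For $w \geq 2$, the coefficient of $t^{g(K)-1} = t^{w \cdot g(C) - 1}$ in $\Delta_K$ is therefore zero, contradicting Corollary~\ref{cor:lspace-top-two}. For $w = 1$, a classical fact (a knot of winding number $\pm 1$ in a solid torus that bounds a disk in $S^3$ is isotopic to the core) forces $P$ itself to be a core of $V$, contradicting our standing convention on satellite patterns. The main obstacle will be making the taut-surface minimization of part~(a) and the connected-fiber analysis for the JSJ-restricted fibrations in part~(c) fully precise; the Alexander-polynomial step in part~(b) is routine once Corollary~\ref{cor:lspace-top-two} is in hand.
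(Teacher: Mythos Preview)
Your approach to the first and third bullets is different from the paper's but essentially sound: rather than citing the classical equivalence (Burde--Zieschang together with Stallings' fibering criterion) between fiberedness of a satellite and fiberedness of its companion and pattern, you argue geometrically by putting the companion torus $T$ transverse to the fibration and restricting. This is valid in outline; the key input you need (and should state) is that an incompressible torus in a surface bundle over $S^1$ can be isotoped so that the projection restricts to a fibration $T\to S^1$, which is where your ``transverse to $\pi$'' hypothesis comes from. For $w\geq 1$ your idea is also morally correct, though the cleanest version avoids the surface-minimization step entirely: when $w=0$ one checks that both $\mu_C$ and $\lambda_C$ die in $H_1(E_K)\cong\Z$, so the injection $\pi_1(T)\hookrightarrow\pi_1(E_K)$ lands in $\ker\big(\pi_1(E_K)\twoheadrightarrow\Z\big)=\pi_1(F)$, which is free and hence contains no $\Z^2$.

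The genuine gap is in the $w=1$ case of the second bullet. The assertion that ``a knot of winding number $\pm 1$ in a solid torus that bounds a disk in $S^3$ is isotopic to the core'' is not a classical fact; it is false as stated. Equivalently, there exist two-component links in $S^3$ with both components unknotted and linking number $\pm 1$ that are not the Hopf link. (You may be conflating algebraic winding number with \emph{wrapping} number: wrapping number~$1$ does force the pattern to be a core, but winding number~$1$ does not.) The paper closes this case by invoking the theorem of Hirasawa--Murasugi--Silver, which says that if $w=1$, $P(U)=U$, and \emph{$K$ is fibered}, then $P$ is isotopic to the core. Fiberedness of $K$ is already available to you from Theorem~\ref{thm:bs-lspace}, so the fix is simply to cite that result in place of the unsupported claim; but as written the $w=1$ step does not go through.
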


\begin{proof}
First note that $K$ is fibered by Theorem~\ref{thm:bs-lspace}. This implies that $w\neq 0$ and that both $P(U)$ and $C$ are fibered by a combination of \cite[Corollary~4.15]{burde-zieschang} and Stallings' theorem that a knot is fibered if and only if the commutator subgroup of its knot group is finitely generated \cite[Theorem~5.1]{burde-zieschang}.  It remains to prove that $P(U)$ is nontrivial.

Suppose for a contradiction that $P(U)$ is the unknot, and let $g=g(K)$.  Then
\[ \Delta_K(t) = \Delta_{P(U)}(t) \cdot \Delta_C(t^w) = \Delta_C(t^w). \]
The left side has the form $\pm(t^g - t^{g-1} + \dots)$ by Corollary~\ref{cor:lspace-top-two}, while the right side can only have that form if $w=1$.  On the other hand, Hirasawa, Murasugi, and Silver \cite[Corollary~1]{hms} proved that if $w=1$ and  $P(U)=U$, and if $K$ is fibered, then $P$ is isotopic to the core of the solid torus, in which case $K$ is not a satellite knot, a contradiction.
\end{proof}

\begin{remark}
It should be true that if $K = P(C)$ is an instanton L-space knot then  $P(U)$ and $C$ are as well.  The Heegaard Floer analogue of this claim is a theorem of Hanselman, Rasmussen, and Watson \cite[Theorem~1.15]{hrw}, but the instanton version remains open.
\end{remark}

\subsection{$\SU(2)$-averse knots}

Given a group $G$, recall from the introduction that a 3-manifold $Y$ is said to be  \emph{$G$-abelian} if every homomorphism \[\pi_1(Y)\to G\] has abelian image. We will be interested in knots with infinitely many $G$-abelian  surgeries, for $G = \SU(2)$ and $\SL(2,\C)$, leading to the  shorthand:

\begin{definition}
A  knot $K\subset S^3$ is \emph{$G$-averse} if it is nontrivial and there are infinitely many rational numbers $r$ for which  $S^3_r(K)$ is $G$-abelian.
\end{definition}

Sivek and Zentner proved the following in \cite[Theorem~1.1]{sz-pillowcase}, tying together the notions of $\SU(2)$-averse knots and instanton L-spaces:

\begin{theorem}[] \label{thm:sz-averse}
If $K \subset S^3$ is $\SU(2)$-averse then the set of $\SU(2)$-abelian slopes is bounded and  has a single accumulation point $r(K)$. Moreover: 
\begin{itemize}
\item  $r(K)$  is rational and satisfies $|r(K)|>2$, and
\item if  $r(K) > 0$ then $S^3_s(K)$ is an instanton L-space for all $s \geq \lceil r(K) \rceil - 1$.
\end{itemize}
\end{theorem}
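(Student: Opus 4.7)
The plan is to analyze the restriction of irreducible $\SU(2)$-representations of $\pi_1(S^3 \setminus K)$ to the boundary torus, and to translate the $\SU(2)$-abelian condition on surgeries into a disjointness condition in the pillowcase $P = (\R/2\pi\Z)^2/(\pm 1)$.  Let $\Pi(K) \subset P$ denote the image of the irreducible $\SU(2)$-character variety of the knot complement under restriction to the boundary, expressed in meridian--longitude eigenvalue coordinates $(\alpha,\beta)$.  Performing $p/q$-surgery imposes the relation $p\alpha + q\beta \equiv 0 \pmod{2\pi}$, so $S^3_{p/q}(K)$ is $\SU(2)$-abelian if and only if the line $L_{p/q}$ of that slope in $P$ is disjoint from $\Pi(K)$.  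The first step is to use Culler--Shalen theory and the standard structure of $\SU(2)$-character varieties to show that $\Pi(K)$ is a compact $1$-complex: a finite union of real-analytic arcs whose endpoints lie at the four corners of $P$ (the central representations of the boundary torus).

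Next I would carry out the accumulation analysis.  If there are infinitely many $\SU(2)$-abelian slopes $r_n = p_n/q_n$, then after passing to a subsequence we may assume $r_n \to r^* \in \R \cup \{\infty\}$.  The lines $L_{r_n}$ Hausdorff-converge to a union of parallel geodesic arcs in $P$ of slope $-r^*$, and since each $L_{r_n}$ misses the closed set $\Pi(K)$, the limit configuration must also avoid $\Pi(K)$ except possibly at the corners.  Because $\Pi(K)$ is a finite union of analytic arcs with well-defined tangent directions at its corner endpoints, the only way that infinitely many slope-lines can squeeze past $\Pi(K)$ is for $r^*$ to match a tangent direction of $\Pi(K)$ at a corner.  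These tangent directions are cut out by local algebraic data at the reducible representations (controlled essentially by the roots of the Alexander polynomial and by boundary slopes visible at that corner), and so are rational; this yields rationality of $r(K) := r^*$.  A more careful comparison of the corner tangents is then needed to conclude that all four corners produce the same limit slope, which gives a \emph{single} accumulation point and boundedness of the set of $\SU(2)$-abelian slopes.

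For the bound $|r(K)|>2$, I would invoke the $\SU(2)$ Property P results of Kronheimer--Mrowka together with their refinements for small slopes: every nontrivial knot admits an irreducible $\SU(2)$-representation on $\pi_1(S^3_r(K))$ for every $r \in [-2,2]$, so no such slope is $\SU(2)$-abelian, and hence $r(K) \notin [-2,2]$.  For the L-space conclusion, $\SU(2)$-abelian integer slopes $s$ force $\rank \Is(S^3_s(K)) = |s|$, since in the absence of irreducibles only the reducible representations contribute to framed instanton homology.  Applying this along an infinite increasing sequence of $\SU(2)$-abelian integer slopes accumulating at $r(K)$ from above, and then using the framed-instanton surgery exact triangle together with the monotonicity of L-space surgery slopes established in \cite{bs-lspace}, one propagates the L-space property to every integer $s \geq \lceil r(K) \rceil - 1$.

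The main obstacle is the accumulation-and-uniqueness analysis of the second step: controlling $\Pi(K)$ near the corners of $P$ precisely enough to pin down $r(K)$ as a single rational number, and in particular to rule out different tangent directions at different corners producing distinct accumulation points, is the delicate heart of the argument and is essentially the technical content of \cite{sz-pillowcase}.
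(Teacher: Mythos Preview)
This theorem is not proved in the present paper at all: it is quoted verbatim as \cite[Theorem~1.1]{sz-pillowcase}, with no argument given here.  So there is no ``paper's own proof'' to compare against; your proposal is really a sketch of the Sivek--Zentner argument, and the first two steps (the pillowcase picture for $\Pi(K)$, the tangent-direction analysis at the corners, and the appeal to Kronheimer--Mrowka for $|r(K)|>2$) track that paper reasonably well, with the honest caveat you flag at the end.

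The L-space step, however, has a genuine gap.  The claim that an $\SU(2)$-abelian integer surgery $S^3_s(K)$ automatically satisfies $\rank \Is(S^3_s(K)) = |s|$ ``since in the absence of irreducibles only the reducible representations contribute'' is not correct: framed instanton homology is a Floer theory, not a count of flat connections, and the vanishing of irreducible $\SU(2)$ representations does not by itself force the homology to its Euler-characteristic lower bound.  Moreover, you assume an infinite sequence of $\SU(2)$-abelian \emph{integer} slopes, but the hypothesis only gives infinitely many rational ones accumulating at $r(K)$; there is no reason any of them need be integers.  The actual argument (in \cite{sz-pillowcase}, using \cite{bs-stein}) works differently: one shows that the pillowcase image avoids certain arcs near the limiting slope, and this geometric condition is what feeds into the instanton machinery to produce L-spaces for all sufficiently large integer surgeries, after which \cite{bs-lspace} extends to all $s \geq \lceil r(K)\rceil - 1$.
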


For example, every nontrivial torus knot $T_{a,b}$ is $\SU(2)$-averse with $r(T_{a,b}) = ab$, because $(nab+1)/n$-surgery on $T_{a,b}$ is a lens space,  and hence $\SU(2)$-abelian, for all $n\in\Z$.  Sivek and Zentner conjecture  \cite{sz-pillowcase} that these are in fact the \emph{only} $\SU(2)$-averse knots. Our Theorem \ref{thm:SLabeliantorus} is the statement that their conjecture holds with $\SL(2,\C)$ in place of $\SU(2)$.

Note that if $K$ is $\SU(2)$-averse then so is its mirror $\mirror{K}$, and  $r(\mirror{K}) = -r(K)$. In particular, Theorem \ref{thm:sz-averse} implies the following, which  leads to strong restrictions on $\SU(2)$-averse knots by the results in the previous section:

\begin{corollary}
\label{cor:SUL}
If $K \subset S^3$ is $\SU(2)$-averse then  $K$ or $\mirror{K}$ is an instanton L-space knot.
\end{corollary}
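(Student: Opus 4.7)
The plan is to reduce the statement to the positive-accumulation-point case of Theorem~\ref{thm:sz-averse} by passing to the mirror if necessary. By hypothesis $K$ is $\SU(2)$-averse, so Theorem~\ref{thm:sz-averse} applies and yields an accumulation point $r(K)\in\Q$ with $|r(K)|>2$. In particular $r(K)\neq 0$, so either $r(K)>0$ or $r(K)<0$.

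First I would handle the case $r(K)>0$ directly. Here $r(K)>2$, so $\lceil r(K)\rceil - 1 \geq 2 > 0$, and Theorem~\ref{thm:sz-averse} then guarantees that $S^3_s(K)$ is an instanton L-space for every integer $s\geq \lceil r(K)\rceil-1$. Choosing any such $s$ (which is automatically positive and rational) exhibits $K$ as an instanton L-space knot.

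If instead $r(K)<0$, I would replace $K$ by its mirror. As recalled just before the statement of the corollary, $\mirror{K}$ is also $\SU(2)$-averse and satisfies $r(\mirror{K}) = -r(K) > 2$. Applying the previous paragraph to $\mirror{K}$ in place of $K$ then shows that $\mirror{K}$ is an instanton L-space knot.

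Combining the two cases gives the corollary. There is no real obstacle here: the content is entirely contained in Theorem~\ref{thm:sz-averse} together with the elementary observation that $\SU(2)$-aversion and the accumulation point $r(\cdot)$ are compatible with mirroring in the expected way, so the only thing to organize is the bookkeeping of signs.
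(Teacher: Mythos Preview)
Your proof is correct and follows exactly the approach the paper indicates: the paper does not spell out a proof of this corollary but simply notes, immediately before stating it, that $\mirror{K}$ is also $\SU(2)$-averse with $r(\mirror{K}) = -r(K)$, and that the corollary then follows from Theorem~\ref{thm:sz-averse}. Your write-up just makes this explicit.
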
 

We will use this in combination with the results of the previous section to study $\SL(2,\C)$-averse knots in \S\ref{sec:SL}, noting that a knot which is $\SL(2,\C)$-averse is also $\SU(2)$-averse.

\subsection{The A-polynomial}

The A-polynomial $A_K(M,L)$ of a knot $K \subset S^3$, introduced by Cooper, Culler, Gillet, Long, and Shalen \cite{ccgls}, is defined in terms of  representations
\[ \rho: \pi_1(S^3\setminus \nu(K)) \to \SL(2,\C) \]
that restrict to diagonal matrices on the peripheral subgroup, meaning that
\begin{align*}
\rho(\mu) &= \begin{pmatrix} M & 0 \\ 0 & M^{-1} \end{pmatrix}, &
\rho(\lambda) &= \begin{pmatrix} L & 0 \\ 0 & L^{-1} \end{pmatrix},
\end{align*} where $\mu$ and $\lambda$ are a meridian and longitude on the boundary of the tubular neighborhood $\nu(K)$ of the knot.
The tuples $(M,L)$ which arise in this way define a subset of $\C^\ast \times \C^\ast$. The 1-dimensional components of the Zariski closure of this subset form an affine plane curve $V(K)$, and we take \[A_K(M,L) \in \Z[M,L] \] to be the defining polynomial of $V(K)$. It is normalized to have integer coefficients whose greatest common divisor is 1, and no repeated factors, and is thus well-defined up to sign.

One of the components $X_\mathrm{red}$ of $V(K)$ corresponds to the abelian representations
\[ \rho_M: \pi_1(S^3\setminus \nu(K)) \twoheadrightarrow H_1(S^3\setminus \nu(K)) \to \SL(2,\C), \qquad M \in \C^\ast \]
that send a meridian $\mu$ to the matrix $\left(\begin{smallmatrix} M & 0 \\ 0 & M^{-1}\end{smallmatrix}\right)$.  Then $\rho_M(\lambda)$ is always the identity matrix since $\lambda$ is nullhomologous, so the image of this component in $\C^\ast\times\C^\ast$ is precisely the line $\{L=1\}$.  This shows that $L-1$ always divides $A_K(M,L)$.

In 2005, Dunfield and Garoufalidis \cite{dunfield-garoufalidis} and Boyer and Zhang \cite{boyer-zhang}  used Kronheimer and Mrowka's work in \cite{km-su2} to prove that $A_K(M,L)$ detects the unknot:

\begin{theorem} \label{thm:dg-bz-unknot}
 $A_K(M,L) = L-1$ if and only if $K$ is the unknot.
\end{theorem}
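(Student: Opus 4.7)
The forward direction is immediate: if $K$ is the unknot then $\pi_1(S^3 \setminus \nu(K)) \cong \Z$, so every representation into $\SL(2,\C)$ is conjugate into the diagonal subgroup and necessarily sends the nullhomologous longitude $\lambda$ to the identity. Hence $V(K) = \{L=1\}$ and $A_K(M,L) = L-1$.

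For the reverse direction, I would argue the contrapositive: if $K$ is nontrivial, then $V(K)$ contains a component distinct from $\{L=1\}$. The decisive new input is Kronheimer--Mrowka's theorem from \cite{km-su2} that the group of any nontrivial knot in $S^3$ admits an irreducible representation into $\SU(2) \subset \SL(2,\C)$. In fact one needs slightly more: a continuous one-parameter family of such irreducible representations whose meridional trace varies across an interval, which their techniques supply via a deformation argument near a reducible character with meridional holonomy $-I$.

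By Thurston's classical dimension count for character varieties of compact 3-manifolds with a single torus boundary component, every irreducible character with noncentral peripheral restriction lies on a component of the $\SL(2,\C)$-character variety $X(K)$ of complex dimension at least one. Let $X$ be such a component containing the Kronheimer--Mrowka family; necessarily $X$ is distinct from the abelian component. The natural algebraic map $X \to \C^* \times \C^*$ given by the peripheral eigenvalues has image contained in $V(K)$, and because the meridional trace varies continuously, so does $M$; the image $X'$ is therefore a one-dimensional algebraic curve in $V(K)$.

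The main obstacle is to verify that $X'$ is not contained in the abelian line $\{L=1\}$, so that its defining polynomial contributes to $A_K(M,L)$ a factor coprime to $L-1$. If $X'$ were contained in $\{L=1\}$, every representation in $X$ would kill $\lambda$ and factor through $\pi_1(S^3_0(K))$. A Culler--Shalen analysis of the ideal points of $X$ would then produce a closed essential surface in $S^3_0(K)$, equivalently an essential surface in the knot exterior with boundary slope $0$; combined either with rigidity results for character varieties of closed 3-manifolds with $b_1 = 1$, or with additional $\SU(2)$-representation input applied to $S^3_0(K)$, this produces a contradiction and completes the proof.
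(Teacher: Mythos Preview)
The paper does not itself prove this theorem; it is quoted as a result of Dunfield--Garoufalidis \cite{dunfield-garoufalidis} and Boyer--Zhang \cite{boyer-zhang}. Your forward direction is correct, and for the converse your setup is reasonable: Kronheimer--Mrowka do supply irreducible $\SU(2)$ characters, and Thurston's dimension bound places them on a component $X$ of complex dimension at least one whose peripheral image $X'$ is a curve. The genuine gap is in your final paragraph. Your Culler--Shalen sketch does not yield a contradiction: if every character on $X$ kills $\lambda$ and hence factors through $\pi_1(S^3_0(K))$, the associated ideal-point surface has boundary slope $0$ or is closed, but slope $0$ is \emph{always} a boundary slope (the Seifert surface), and closed essential surfaces in $0$-surgery are not absurd either. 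There are no general ``rigidity results for character varieties of closed $3$-manifolds with $b_1=1$'' of the sort you invoke---for instance, $0$-surgery on any torus knot is Seifert fibered and has positive-dimensional families of irreducible $\SL(2,\C)$ representations---and the phrase ``additional $\SU(2)$-representation input applied to $S^3_0(K)$'' does not point to a specific result that would help here.

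The missing ingredient, which Dunfield--Garoufalidis use explicitly, is that \cite{km-su2} provides irreducible $\SU(2)$ representations of $\pi_1(S^3_r(K))$ for \emph{every} rational slope $|r|\leq 2$, not only for $r=0$. For $r$ with small numerator---say $r=1$---the resulting peripheral eigenvalue pair $(M,L)$ satisfies $ML=1$ with $M\neq\pm 1$, hence $L\neq 1$ directly. One then argues, using the full range of slopes, that these irreducible characters cannot all map to isolated points of the eigenvalue variety, so some one-dimensional component of that variety lies off $\{L=1\}$ and contributes a nontrivial factor to $A_K$. Your argument never leaves the slope $r=0$, which is precisely where $L=1$ is forced, and this is why it cannot close.
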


Our Theorem \ref{thm:thin-torus} vastly generalizes this result. Let $\tilde{V}(K)$ denote the plane curve defined in the same way as $V(K)$, except that we only consider pairs $(M,L)$ coming from \emph{irreducible} representations. Following Ni and Zhang \cite{ni-zhang}, we define an enhancement
\[ \tilde{A}_K(M,L)  \in \Z[M,L] \]
to be the defining polynomial of $\tilde{V}(K)$. As mentioned in the introduction, this \emph{enhanced} A-polynomial is related to the original by
\[A_K(M,L) = \operatorname{lcm}(\tilde{A}_K(M,L), L-1). \]
Indeed, $\tilde{A}_K(M,L)$ no longer \emph{has} to be a multiple of $L-1$, but this can still happen if there is a family of irreducible representations $\rho$ on which the trace of $\rho(\mu)$ takes infinitely many values while $\rho(\lambda)$ is always the identity matrix. An immediate consequence of Theorem \ref{thm:dg-bz-unknot} is that $\tilde{A}_K(M,L)$ detects the unknot as follows:

\begin{corollary} \label{cor:Atildeunknot}
 $\tilde{A}_K(M,L) = 1$ if and only if $K$ is the unknot.
\end{corollary}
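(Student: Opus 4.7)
The plan is to deduce this corollary directly from Theorem \ref{thm:dg-bz-unknot} together with the relation \eqref{eqn:run} expressing $A_K$ as the $\operatorname{lcm}$ of $\tilde{A}_K$ and $L-1$. Since both directions collapse to one-line arguments, the task is really just to verify that the conventions defining $\tilde{A}_K$ behave correctly in the degenerate case where $\tilde{V}(K)$ is empty.

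For the forward implication, I would argue that if $K$ is the unknot, then $\pi_1(S^3 \setminus \nu(K)) \cong \Z$ is abelian, so every representation to $\SL(2,\C)$ is reducible (indeed abelian). Therefore the locus $\tilde{V}(K)$ coming from irreducible representations is empty, and by the normalization convention (in which $\tilde{A}_K$ has content $1$ and is the defining polynomial of $\tilde{V}(K)$) this forces $\tilde{A}_K(M,L) = 1$.

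For the reverse implication, suppose that $\tilde{A}_K(M,L) = 1$. Then \eqref{eqn:run} gives
\[ A_K(M,L) = \operatorname{lcm}\bigl(\tilde{A}_K(M,L),\, L-1\bigr) = \operatorname{lcm}(1, L-1) = L-1, \]
so Theorem \ref{thm:dg-bz-unknot} immediately yields that $K$ is the unknot.

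The only subtlety to address is the convention that an empty plane curve has defining polynomial $1$ (rather than, say, being undefined); once that is spelled out, there is no real obstacle. No instanton Floer input or additional machinery is required beyond the cited theorem of Dunfield--Garoufalidis and Boyer--Zhang.
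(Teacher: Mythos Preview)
Your proposal is correct and matches the paper's approach: the paper simply declares this corollary an immediate consequence of Theorem~\ref{thm:dg-bz-unknot}, and your argument via the relation $A_K = \operatorname{lcm}(\tilde{A}_K, L-1)$ is exactly how one unpacks that immediacy. (A minor cosmetic point: you label ``if $K$ is the unknot then $\tilde{A}_K = 1$'' as the forward implication, whereas most readers would call that the reverse direction of the biconditional as stated; the mathematics is unaffected.)
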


Given a knot $K \subset S^3$, let \[\cN(K) \subset \R^2\] denote the \emph{Newton polygon} of $\tilde{A}_K(M,L)$, which is by definition the convex hull of all points $(a,b) \in \Z^2$ such that the $M^bL^a$-coefficient of $\tilde{A}_K(M,L)$ is nonzero. 
\begin{remark}
The fact that $(a,b)$ corresponds to a monomial $M^bL^a$, rather than to $M^aL^b$, is not a typo: this allows us to directly relate the edge slopes of $\cN(K)$ to slopes in the knot complement, rather than to their reciprocals. We will return to this point in \S\ref{sec:thin}.
\end{remark}

Corollary \ref{cor:Atildeunknot} is equivalent to the statement that $\cN(K)$ is a point if and only if $K$ is the unknot. The next simplest convex polygon after a point is a line segment, which leads to:

\begin{definition}
Let $r\in\Q$.  We say that a  knot $K \subset S^3$ is \emph{$r$-thin} if $\cN(K)$ is contained in a line segment of slope $r$. We say that $K$ is \emph{thin} if it is $r$-thin for some $r\in \Q$.
\end{definition}

The enhanced A-polynomial is very difficult to compute in general, but this computation is fairly easy for torus knots. The results of these computations are described in the example below, which shows in particular that a nontrivial torus knot $T_{a,b}$ is $ab$-thin.

\begin{example}\label{ex:torus}
Let $|a| > b \geq 2$. Then the enhanced A-polynomial of the torus knot $T_{a,b}$
is given by \[\tilde{A}_{T_{a,b}}(M,L) = \begin{cases}
1+M^{2a}L, &  b=2, \,a>0\\ 
M^{-2a}+L,&  b=2, \,a<0\\
-1+M^{2ab}L^2, &b>2, \,a>0\\ 
-M^{-2ab}+L^2, & b>2, \,a<0\\ 
\end{cases}\]
as stated in \cite[Lemma 2.1]{ni-zhang}, for instance. 
\end{example}

Our Theorem \ref{thm:thin-torus} asserts that  torus knots are the \emph{only} thin knots. Then \[\tilde{A}_K(M,L) = \tilde{A}_{T_{ab}}(M,L) \] implies that  $K = T_{p,q}$ where $pq=ab$. Corollary \ref{cor:detectionA} will follow from this together with the corollary below, which is an immediate consequence of the calculations in Example \ref{ex:torus}:

\begin{corollary}
\label{cor:detectionB}
$\tilde{A}_K(M,L)$ detects  $T_{a,b}$ among nontrivial torus knots if and only if:
\begin{itemize}
\item $|a|=2$ or $|b|=2$, or 
\item $|a|$ and $|b|$ are powers of different primes.
\end{itemize} 
\end{corollary}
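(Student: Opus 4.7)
The plan is to read off the enhanced A-polynomial from the explicit formulas in Example~\ref{ex:torus} and then characterize when the resulting polynomial uniquely determines the torus knot. Using the symmetries $T_{a,b}=T_{b,a}$ and $T_{-a,-b}=T_{a,b}$, I would normalize a nontrivial torus knot so that $|a|>b\geq 2$; coprimality of $(a,b)$ precludes the equality $|a|=b$.

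In Example~\ref{ex:torus}, the case $b=2$ yields polynomials of $L$-degree $1$, while the case $b\geq 3$ yields polynomials of $L$-degree $2$. So any other torus knot $T_{c,d}$ with the same enhanced A-polynomial must lie in the same case as $T_{a,b}$. If $b=2$, then the sign and $M$-exponent of the $L^1$-term recover the sign of $a$ and $|a|$, giving detection; this handles the ``$|a|=2$ or $|b|=2$'' branch of the corollary. If $b\geq 3$, then the polynomial $-1+M^{2ab}L^2$ or $-M^{-2ab}+L^2$ depends (up to sign) only on the sign of $a$ and on $n:=|ab|$, so detection reduces to a purely number-theoretic question: is $\{|a|,|b|\}$ the only unordered coprime factorization of $n$ into two factors each $\geq 3$?

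The remaining step is elementary enumeration. Writing $n=p_1^{e_1}\cdots p_k^{e_k}$, coprime factorizations of $n$ correspond bijectively to subsets of $\{p_1,\dots,p_k\}$, giving $2^{k-1}$ unordered pairs. If $|a|$ and $|b|$ are prime powers of different primes, then $k=2$ and the only nontrivial coprime factorization is $\{|a|,|b|\}$ itself, so detection holds. Otherwise, since $|a|,|b|$ are coprime and both $\geq 3$ but not prime powers of different primes, $n$ must have $k\geq 3$ distinct prime divisors. Of the $2^{k-1}\geq 4$ unordered coprime factorizations, at most two (namely $\{1,n\}$ and possibly $\{2,n/2\}$) have a factor below $3$, so at least two valid factorizations remain, and detection fails. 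There is no real obstacle: the corollary is a direct combinatorial consequence of Example~\ref{ex:torus}, with the only care being the sign bookkeeping and the observation that $\omega(n)\geq 3$ is forced whenever $|a|,|b|\geq 3$ are coprime but not prime powers of different primes.
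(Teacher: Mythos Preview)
Your argument is correct and is exactly the detailed verification the paper has in mind: in the paper this corollary is simply asserted to be ``an immediate consequence of the calculations in Example~\ref{ex:torus}'', without spelling out the number-theoretic step. Your normalization, the $L$-degree dichotomy separating the $b=2$ and $b\geq 3$ cases, and the count of coprime factorizations via $\omega(n)$ together constitute precisely the routine check the paper omits.
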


The next result, which we proved in \cite[\S10]{bs-lspace}, is important because (1) it says that in thinking about thin knots we do not have to consider hyperbolic knots, and (2) it connects thinness with the notions of $\SU(2)$-averse knots and instanton L-spaces. This will allow us to study thin knots using instanton Floer techniques and the results in previous sections.

\begin{proposition} \label{prop:thin-basics}
If $K\subset S^3$ is a nontrivial $r$-thin knot then:
\begin{itemize}
\item $K$ is not hyperbolic, 
\item  $K$ is $\SU(2)$-averse with $r(K) = r$.
\end{itemize}
\end{proposition}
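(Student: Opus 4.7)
The plan is to exploit the restrictive algebraic form that $r$-thinness imposes on $\tilde{A}_K$. Write $r=p/q$ in lowest terms. The hypothesis that every monomial $M^{b} L^{a}$ with nonzero coefficient in $\tilde{A}_K(M,L)$ has exponent $(a,b)$ on a single line of slope $p/q$ forces a factorization
\[
\tilde{A}_K(M,L) \;=\; M^{b_0} L^{a_0}\cdot Q(M^p L^q)
\]
for some one-variable polynomial $Q(x)\in\Z[x]$ with $Q(0)\ne 0$; if $\alpha_1,\dots,\alpha_k\in\C^*$ are the roots of $Q$, then
\[
\tilde{V}(K)\;=\;\bigcup_{j=1}^{k}\bigl\{(M,L)\in(\C^*)^2 : M^pL^q=\alpha_j\bigr\}
\]
is a union of smooth algebraic curves whose tangent slope in the coordinates $(\log M,\log L)$ is the real number $-p/q$ at every point.

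For the first bullet, I argue by contradiction. If $K$ were hyperbolic, Thurston's deformation theory would provide a complex $1$-dimensional component $X_0$ of the $\SL(2,\C)$-character variety of $S^3\setminus K$ containing the discrete faithful representation $\rho_0$, whose image under the eigenvalue map $\rho\mapsto(M,L)$ is (the closure of) an irreducible factor of $\tilde{V}(K)$. The tangent direction to this image at the character of $\rho_0$ is the cusp shape of $K$, which is strictly complex. This contradicts the description above, in which every component of $\tilde{V}(K)$ has real tangent slope, so $K$ cannot be hyperbolic.

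For the second bullet, an irreducible representation $\pi_1(S^3_{m/n}(K))\to\SU(2)$ lifts to one on $\pi_1(S^3\setminus K)$ which, up to conjugation, sends $\mu\mapsto\mathrm{diag}(M,\bar M)$ and $\lambda\mapsto\mathrm{diag}(L,\bar L)$ with $(M,L)\in U(1)^2\cap\tilde{V}(K)$ and $M^mL^n=1$. Writing $(\theta_M,\theta_L)\in(\R/2\pi\Z)^2$ for additive coordinates on $U(1)^2$, the set $U(1)^2\cap\tilde{V}(K)$ is a disjoint union of $k$ closed geodesics of slope $-p/q$, while $\{M^mL^n=1\}$ is a union of closed geodesics of slope $-m/n$. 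For $m/n\ne p/q$ these two families meet transversely in finitely many points, whose positions on the torus depend linearly on $(m,n)$. A standard equidistribution argument, combined with the Sivek--Zentner pillowcase analysis in \cite{sz-pillowcase} to pick out the actual $\SU(2)$-locus inside $U(1)^2\cap\tilde{V}(K)$, yields an infinite sequence $m_i/n_i\to r$ of slopes with no irreducible $\SU(2)$-representation of $\pi_1(S^3_{m_i/n_i}(K))$. Hence $K$ is $\SU(2)$-averse, and the uniqueness of the accumulation point in Theorem~\ref{thm:sz-averse} forces $r(K)=r$.

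The principal obstacle is the $\SU(2)$-specific part of the second bullet: not every point of $U(1)^2\cap\tilde{V}(K)$ lifts to an $\SU(2)$-representation of $\pi_1(S^3\setminus K)$, so distinguishing the genuine $\SU(2)$-locus from the ambient $\SL(2,\C)$-locus requires the pillowcase techniques of \cite{sz-pillowcase}. Given those techniques, the clean structural formula for $\tilde{A}_K$ derived from $r$-thinness makes the remaining count of surgery representations tractable.
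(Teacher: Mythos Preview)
The paper's own proof is two bare citations: the first bullet is \cite[Proposition~10.7]{bs-lspace}, which repackages \cite[Proposition~2.6]{ccgls}, and the second is \cite[Proposition~10.10]{bs-lspace}. Your sketch is an attempt to unpack what lies behind those citations, and for the first bullet it does so correctly: the cusp-shape argument you give is exactly the content of \cite[Proposition~2.6]{ccgls}.

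For the second bullet, however, there is a genuine gap in what you actually assert, separate from the part you openly defer to \cite{sz-pillowcase}. You claim that any irreducible $\SU(2)$ representation of $\pi_1(S^3\setminus K)$ has peripheral eigenvalues $(M,L)\in U(1)^2\cap\tilde V(K)$. This is not automatic: by definition $\tilde V(K)$ consists only of the \emph{one-dimensional} components of the Zariski closure of the eigenvalue image, so an irreducible representation lying on a zero-dimensional component of the character variety need not contribute a point of $\tilde V(K)$ at all. The argument in \cite{bs-lspace} instead shows directly that $r$-thinness forces $\tr\rho(\mu^p\lambda^q)$ to take finitely many values on every positive-dimensional component of the $\SL(2,\C)$ character variety, and then invokes the Sivek--Zentner characterization of $\SU(2)$-averse knots in terms of the pillowcase image; one still has to know that the $\SU(2)$ pillowcase image is governed by those components, which is handled there but not in your sketch.

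A smaller point: ``equidistribution'' is not really what is happening. Once the pillowcase image is known to lie in finitely many \emph{arcs} (not full geodesics) of slope $r$, the argument is that for $m/n$ close enough to $p/q$ the surgery geodesic $\{m\theta_M+n\theta_L\equiv 0\}$ can be made to miss those arcs entirely; this is a direct geometric avoidance argument, not an equidistribution one. Your outline is pointed in the right direction and correctly identifies \cite{sz-pillowcase} as the essential input, but as written the reduction to that input is not sound.
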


\begin{proof}
The first claim is \cite[Proposition~10.7]{bs-lspace}, which is really just a reinterpretation of \cite[Proposition~2.6]{ccgls}.  The second claim is  \cite[Proposition~10.10]{bs-lspace}.\end{proof}

Proposition~\ref{prop:thin-basics} says in particular that  any thin knot  is either a torus knot or a satellite knot. The torus knot case is completely understood, as described above. Our focus for the remainder of this paper will therefore be on thin satellite knots, and we record the following:

\begin{proposition} \label{prop:thin-satellite}
Suppose  $K = P(C)$ is an $r$-thin satellite knot and let $w$ be the winding number of the pattern $P$.  Then:
\begin{itemize}
\item $P(U)$ is nontrivial and $w \geq 2$,
\item $P(U)$ is $r$-thin,
\item the companion $C$ is $(r/w^2)$-thin.
\end{itemize}
\end{proposition}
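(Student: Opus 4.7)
The plan is to combine the instanton L-space constraints from Section~\ref{sec:prelim} with the satellite structure of the enhanced A-polynomial, which was analyzed by Ni and Zhang \cite{ni-zhang}.

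First I would reduce to an L-space setting. By Proposition~\ref{prop:thin-basics}, an $r$-thin knot is $\SU(2)$-averse with $r(K)=r$, so by Corollary~\ref{cor:SUL}, either $K$ or $\mirror{K}$ is an instanton L-space knot. Mirroring sends $(r,w,P,C)$ to $(-r,w,\mirror{P},\mirror{C})$ and preserves the three conclusions, so I may assume $K$ itself is an instanton L-space knot. Lemma~\ref{lem:satellite-lspace} then immediately yields $w\geq 1$, $P(U)$ nontrivial, and both $P(U)$ and $C$ fibered.

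Next I would establish the slope statements by decomposing $V(K)$ along the splitting torus $T=\partial\nu(C)$. A representation $\rho\colon\pi_1(S^3\setminus K)\to\SL(2,\C)$ restricts to an abelian (generically diagonal) representation of $\pi_1(T)$; gluing such a restriction to representations of the pattern and companion sides yields, following Ni--Zhang, two basic families of components of $V(K)$: a pattern family isomorphic to $\tilde{V}(P(U))$, and a companion family obtained as the image of $\tilde{V}(C)$ under the substitution $M_C=M^w$ together with the longitude identification coming from the satellite framing. Translating to Newton polygons, $\cN(K)$ contains both $\cN(P(U))$ and a rescaled copy of $\cN(C)$ in which each edge slope is multiplied by $w^2$. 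Since $K$ is $r$-thin, both pieces must lie on lines of slope $r$, forcing $P(U)$ to be $r$-thin and $C$ to be $(r/w^2)$-thin.

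The main obstacle is ruling out $w=1$, which I would approach through the Alexander polynomial. If $w=1$, then $\Delta_K(t)=\Delta_{P(U)}(t)\cdot\Delta_C(t)$. By Corollary~\ref{cor:lspace-top-two} and Theorem~\ref{thm:ly-lspace}, $\pm\Delta_K(t)=t^g-t^{g-1}+\dots$ with all coefficients in $\{-1,0,1\}$ and the nonzero ones alternating in sign. Both factors are monic reciprocal polynomials of positive degree by fiberedness, and the strategy is to show that multiplying two such polynomials and demanding the product satisfy this extremely restrictive form eventually forces one factor to be trivial---contradicting Lemma~\ref{lem:satellite-lspace}. Pushing the coefficient-by-coefficient bookkeeping through is the delicate part, since we have no a priori control over the middle coefficients of $\Delta_{P(U)}$ or $\Delta_C$ beyond fiberedness; the $r$-thinness of $P(U)$ and $C$ established in the previous step (applied, if needed, inductively to the companion) should supply the additional structural input required to close the argument.
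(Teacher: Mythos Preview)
Your overall approach matches the paper's: reduce to the L-space setting via Proposition~\ref{prop:thin-basics} and Corollary~\ref{cor:SUL}, invoke Lemma~\ref{lem:satellite-lspace} for $w\geq 1$ and nontriviality of $P(U)$, obtain the thinness of $P(U)$ and $C$ from the satellite decomposition of the enhanced A-polynomial (the paper simply cites \cite[Lemma~10.12]{bs-lspace} here), and then rule out $w=1$ via the Alexander polynomial.

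The one place you leave a genuine gap is the $w=1$ step, which you describe as ``delicate'' coefficient bookkeeping with ``no a priori control over the middle coefficients of $\Delta_{P(U)}$ or $\Delta_C$ beyond fiberedness.'' In fact you have already done the work needed to close this cleanly, and no bookkeeping or induction is required. Since you have shown that $P(U)$ is $r$-thin and $C$ is $(r/w^2)$-thin, Proposition~\ref{prop:thin-basics} and Corollary~\ref{cor:SUL} apply to \emph{them} as well: each of $P(U)$ and $C$ is, up to mirroring, a nontrivial instanton L-space knot. Corollary~\ref{cor:lspace-top-two} therefore gives the leading two terms of each factor, namely
\[
\pm\Delta_{P(U)}(t)=t^g-t^{g-1}+O(t^{g-2}),\qquad \pm\Delta_C(t)=t^h-t^{h-1}+O(t^{h-2}),
\]
with $g,h\geq 1$. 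If $w=1$ the product is $\pm\big(t^{g+h}-2t^{g+h-1}+O(t^{g+h-2})\big)$, so $\Delta_K(t)$ has a coefficient of $\pm 2$, contradicting Theorem~\ref{thm:ly-lspace}. That is the entire argument; the ``additional structural input'' you allude to is exactly the observation that thinness of $P(U)$ and $C$ promotes them to L-space knots.
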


\begin{proof}
Proposition~\ref{prop:thin-basics} says that $K$ is $\SU(2)$-averse. Then Corollary \ref{cor:SUL} says that either $K$ or $\mirror{K}$ is an instanton L-space knot, in which case  Lemma \ref{lem:satellite-lspace} implies that $P(U)$ is nontrivial and that $w \geq 1$. The second and third claims are then part of \cite[Lemma~10.12]{bs-lspace}.
It remains to prove that $w \geq 2$, for which it  suffices to show that $w\neq 1$.  

Since $P(U)$ and $C$ are $r$- and $(r/w^2)$-thin, they are $\SU(2)$-averse and thus up to mirroring are nontrivial instanton L-space knots, again by Proposition~\ref{prop:thin-basics} and Corollary \ref{cor:SUL}. Suppose their genera are $g \geq 1$ and $h\geq 1$, respectively.  Note that \[ \Delta_K(t) = \Delta_{P(U)}(t) \cdot \Delta_C(t^w). \] If $w=1$, then Corollary~\ref{cor:lspace-top-two} says that the right side of this equation
has the form
\[ \pm\big(t^g - t^{g-1} + O(t^{g-2})\big)\big(t^h - t^{h-1} + O(t^{h-2})\big) = \pm\big( t^{g+h} - 2t^{g+h-1} + O(t^{g+h-2}) \big). \]
But then the $t^{g+h-1}$-coefficient of $\Delta_K(t)$ would be $\pm2$, contradicting Theorem \ref{thm:ly-lspace}.  Therefore $w \neq 1$, completing the proof.
\end{proof}

\section{L-space satellite knots with torus knot pattern} \label{sec:satellites}

The results of the previous section show that understanding thin knots requires us to understand instanton L-space knots that are satellite knots.  In this section, we prove new  results about such knots; these will be key in our proofs of both Theorems \ref{thm:SLabeliantorus} and \ref{thm:thin-torus}. The proposition below is the main technical result of this section:

\begin{proposition} \label{prop:torus-satellite-non-lspace}
Let $K = P(C)$ be a satellite knot whose pattern $P$ has winding number $w \geq 1$.  If $P(U) = T_{a,b}$ is a nontrivial torus knot such that $ab$ is a multiple of $w^2$, and $C$ or $\mirror{C}$ is an instanton L-space knot, then neither $K$ nor $\mirror{K}$ is an instanton L-space knot.
\end{proposition}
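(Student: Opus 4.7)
The plan is to assume for contradiction that $K$ or $\mirror{K}$ is an instanton L-space knot. Since $\Delta_K = \Delta_{\mirror{K}}$ it suffices to treat $K$ itself, in which case Theorem~\ref{thm:ly-lspace} and Corollary~\ref{cor:lspace-top-two} force the coefficients of $\Delta_K(t)$ to lie in $\{-1,0,1\}$, alternate in sign, and have leading terms $t^g - t^{g-1} + \cdots$. I will derive a contradiction from the satellite formula
\[
\Delta_K(t) = \Delta_{T_{a,b}}(t)\cdot \Delta_C(t^w),
\]
writing $\Delta_{T_{a,b}}(t)=\sum_{j=0}^{N}\beta_j t^j$ with $N=(a-1)(b-1)$ and $\Delta_C(t)=\sum_{i=0}^{2h}\gamma_i t^i$, normalized so $\beta_N=\gamma_{2h}=1$ and $\beta_{N-1}=\gamma_{2h-1}=-1$; the second pair of equalities uses Corollary~\ref{cor:lspace-top-two} applied to whichever of $C$, $\mirror{C}$ is an L-space knot, together with $\Delta_C = \Delta_{\mirror{C}}$.

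The case $w=1$ follows at once from the argument at the end of the proof of Proposition~\ref{prop:thin-satellite}: the product of the leading pieces of $\Delta_{T_{a,b}}$ and $\Delta_C$ gives a $t^{g-1}$-coefficient equal to $-2$. For $w\geq 2$ I focus on the coefficient of $t^{N+2wh-w}$ in $\Delta_K$. A direct index count shows that the only pairs $(j,i)\in[0,N]\times[0,2h]$ with $j+wi=N+2wh-w$ are $(N-w,2h)$ and $(N,2h-1)$, so this coefficient equals $\beta_{N-w}-1$; similarly, for $0 \leq k<w$ the $t^{N+2wh-k}$-coefficient of $\Delta_K$ equals $\beta_{N-k}$.

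The key lemma is that $w \notin S := \langle a,b\rangle$. If $w=ia+jb$ with $i,j\geq 0$, then $j=0$ forces $a \mid w$ and hence $a^2 \mid w^2 \mid ab$, so $a \mid b$ and $a=1$, contradicting $a\geq 2$; $i=0$ is impossible by symmetry; and $i,j\geq 1$ gives $w \geq a+b > \sqrt{ab} \geq w$. I then invoke the standard semigroup identity $\beta_{N-k} = \chi_S(k) - \chi_S(k-1)$, where $\chi_S$ is the indicator of $S$ with $\chi_S(-1)=0$: the nonzero $\beta_{N-k}$ occur exactly at transitions in $\chi_S$, with sign $+1$ at a gap-to-semigroup transition and $-1$ at a semigroup-to-gap transition. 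If $w-1 \in S$ then the transition at $k=w$ is semigroup-to-gap, so $\beta_{N-w}=-1$ and the $t^{N+2wh-w}$-coefficient is $-2$, contradicting Theorem~\ref{thm:ly-lspace}. If instead $w-1 \notin S$, set $s^* = \max\{s\in S : s < w\}$ and $k^* = s^*+1 \leq w-1$; then $\beta_{N-w}=0$, so the $t^{N+2wh-w}$-coefficient is $-1$, while $\beta_{N-k^*} = -1$ and $\beta_{N-k}=0$ for $k^* < k < w$ because the interval $(s^*,w)$ contains no element of $S$. Thus $\Delta_K$ has two consecutive nonzero coefficients, at positions $N+2wh-k^*$ and $N+2wh-w$, both equal to $-1$, violating the sign-alternation clause of Theorem~\ref{thm:ly-lspace}.

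The main obstacle is identifying the coefficient of $t^{N+2wh-w}$ as the right one to isolate and proving $w \notin S$; once these are in place, the case analysis on whether $w-1 \in S$ is short and relies only on the semigroup identity for the coefficients of $\Delta_{T_{a,b}}$.
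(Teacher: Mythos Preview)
Your proof is correct and follows the same overall strategy as the paper: both examine the coefficient of $t^{g+hw-w}$ in $\Delta_K$ (in your normalization, $t^{N+2hw-w}$), show it equals $\beta_{N-w}-1$, and then derive a contradiction with Theorem~\ref{thm:ly-lspace} according to whether $\beta_{N-w}$ is $-1$ (yielding a coefficient of $-2$) or $0$ (yielding two consecutive $-1$'s among the nonzero coefficients). The difference lies in how the remaining possibility $\beta_{N-w}=1$ is ruled out. The paper first proves an explicit formula for the top $\lfloor a/b\rfloor+1$ pairs of terms of $\Delta_{T_{a,b}}$ (Lemma~\ref{lem:torus-poly-leading}), uses it---after separately establishing $w<a$---to show that $\beta_{N-w}=1$ would force $b\mid w$ (Lemma~\ref{lem:w-mod-b}), and then deduces $b\mid a$ from $w^2\mid ab$, contradicting $\gcd(a,b)=1$. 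You instead invoke the standard semigroup identity $\beta_{N-k}=\chi_S(k)-\chi_S(k-1)$ for $S=\langle a,b\rangle$ and prove directly that $w\notin S$ from $w^2\mid ab$, which packages the paper's two-step chain ($w<a$, then $b\nmid w$) into a single short argument and avoids the computation of Lemma~\ref{lem:torus-poly-leading} altogether. The semigroup bound also gives $w\leq N-1$ for free (since the Frobenius number of $S$ is $N-1$), which justifies your index count without a separate estimate. Both routes land in the same place; yours is somewhat more streamlined.
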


We note that the hypothesis that $P(U)$ is a torus knot does not necessarily mean that $K = P(C)$ is a cable, because the pattern knot $P \subset S^1\times D^2$ does not have to be isotopic into the boundary torus $S^1 \times S^1$.

We will prove Proposition~\ref{prop:torus-satellite-non-lspace} by examining the Alexander polynomial of $K$, recalling that by convention this satisfies $\Delta_K(t) = \Delta_K(t^{-1})$ and $\Delta_K(1) = 1$.  We will show that $\Delta_K(t)$ cannot have the form guaranteed by Theorem~\ref{thm:ly-lspace}: either some coefficient has absolute value at least 2, or two consecutive nonzero coefficients have the same sign.  We begin by studying the leading terms of the Alexander polynomials of torus knots:

\begin{lemma} \label{lem:torus-poly-leading}
Let $g = g(T_{p,q}) = \frac{1}{2}(p-1)(q-1)$, where $p > q \geq 2$ and $\gcd(p,q)=1$.  Then
\[ \Delta_{T_{p,q}}(t) = \sum_{i=0}^{\lfloor p/q\rfloor} (t^{g-iq} - t^{g-iq-1}) + O(t^{g-p}). \]
\end{lemma}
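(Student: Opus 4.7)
The plan is to extract the top coefficients of $\Delta_{T_{p,q}}(t)$ directly from its classical closed-form expression, by expanding as a power series in $s := t^{-1}$ and truncating modulo $s^p$. Recall the standard identity
\[
\Delta_{T_{p,q}}(t) = \frac{(1-t)(1-t^{pq})}{(1-t^p)(1-t^q)} = \frac{1+t^q+t^{2q}+\cdots+t^{(p-1)q}}{1+t+t^2+\cdots+t^{p-1}},
\]
which exhibits $\Delta_{T_{p,q}}(t)$ as a polynomial of degree $2g = (p-1)(q-1)$ in $t$, with constant term $1$ and leading coefficient $1$. The Laurent normalization used in the statement corresponds to $t^{-g}\Delta_{T_{p,q}}(t)$.

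The heart of the argument is the identity
\[
\frac{1}{1+s+s^2+\cdots+s^{p-1}} = \frac{1-s}{1-s^p} = (1-s)\sum_{k\ge 0} s^{kp} \equiv 1-s \pmod{s^p}.
\]
Substituting $s = t^{-1}$ gives
\[
t^{-2g}\Delta_{T_{p,q}}(t) = \frac{1+s^q+s^{2q}+\cdots+s^{(p-1)q}}{1+s+s^2+\cdots+s^{p-1}},
\]
and, modulo $s^p$, the numerator reduces to $1+s^q+s^{2q}+\cdots+s^{Kq}$ where $K := \lfloor p/q\rfloor$. Multiplying yields
\[
t^{-2g}\Delta_{T_{p,q}}(t) \equiv (1+s^q+s^{2q}+\cdots+s^{Kq})(1-s) = \sum_{i=0}^{K}(s^{iq}-s^{iq+1}) \pmod{s^p}.
\]
Converting back via $t^g \cdot s^n = t^{g-n}$ produces the claimed identity
\[
\Delta_{T_{p,q}}(t) = \sum_{i=0}^{\lfloor p/q\rfloor}(t^{g-iq}-t^{g-iq-1}) + O(t^{g-p}).
\]

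The only bookkeeping worth verifying is that $K = \lfloor p/q\rfloor$ is indeed the sharp cutoff. Since $\gcd(p,q)=1$ with $q\ge 2$, we can write $p = Kq+r$ with $1\le r\le q-1$; then $Kq < p < (K+1)q$, so the numerator monomials $s^{iq}$ contribute nontrivially to the truncation precisely for $0\le i\le K$, and all higher terms (both from the numerator and from the tail of the $1/(1+s+\cdots+s^{p-1})$ expansion) are genuinely $O(s^p)$. There is no serious obstacle here; the proof is essentially a routine generating-function calculation, with the coprimality of $p$ and $q$ being the only input that needs to be noted explicitly.
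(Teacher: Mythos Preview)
Your proof is correct. Both arguments rest on the same factorization $\Delta_{T_{p,q}}(t)\cdot(1+t+\cdots+t^{p-1}) = 1+t^q+\cdots+t^{(p-1)q}$, but you and the paper handle it differently. The paper multiplies both $\Delta_{T_{p,q}}(t)$ and the claimed approximation by $(t^p-1)$, shows the difference is $O(t^g)$, and then divides back out. You instead substitute $s=t^{-1}$ and invert $1+s+\cdots+s^{p-1}$ as a formal power series, observing that it equals $1-s$ modulo $s^p$; the truncation then drops out immediately. Your route is a bit slicker---it replaces the paper's two separate computations \eqref{eq:alex-times-tp-1} and \eqref{eq:alex-approx-times-tp-1} with a single reduction in $\Z[[s]]/(s^p)$---while the paper's version has the minor advantage of never leaving the realm of polynomials. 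One small point worth making explicit: when $p\equiv 1\pmod q$ the last term $-s^{Kq+1}$ in your product has degree exactly $p$ and so vanishes modulo $s^p$, but this is harmless since the corresponding $-t^{g-p}$ is already absorbed into the $O(t^{g-p})$ error.
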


\begin{proof}
Noting that $2g-1 = pq-p-q$, we write 
\begin{align*}
(t^p-1) t^g \Delta_{T_{p,q}}(t) &= \frac{(t^{pq}-1)(t-1)}{t^q-1} \\
&= (t-1)\left( t^{(p-1)q} + t^{(p-2)q} + \dots + t^q + 1 \right) \\
&= (t-1)\left( t^{2g+p-1} + t^{2g+p-1-q} + \dots + t^{2g+p-1 - q(p-1)}\right)
\end{align*}
and so
\begin{equation} \label{eq:alex-times-tp-1}
(t^p-1)\Delta_{T_{p,q}}(t) = (t-1)\left( t^{g+p-1} + t^{g+p-1-q} + \dots + t^{g+p-1 - q(p-1)}\right).
\end{equation}
Similarly, if we let $r=\lfloor \frac{p}{q} \rfloor$, then we have $q(r+1) > p$ and $1 \leq r \leq p-1$.  (The bound $r\leq p-1$ is immediate from $q \geq 2$, which implies that $r < p$.) We compute that
\begin{align*}
(t^p-1)\sum_{i=0}^{r} (t^{g-iq} - t^{g-iq-1}) &= (t^p-1)(t-1)\sum_{i=0}^r t^{g-1-iq} \\
&= (t-1)\sum_{i=0}^r t^{g+p-1-iq} - \underbrace{(t-1)\sum_{i=0}^r t^{g-1-iq}}_{=\,O(t^g)}.
\end{align*}
In the first term on the right, since $r \leq p-1$ we can extend the summation from $i=r$ to $i=p-1$ by adding additional terms, all of which are no bigger than
\[ (t-1) \cdot O(t^{g+p-1-q(r+1)}) = O(t^g). \]
Similarly the entire second term on the right is $O(t^g)$, so overall we have
\begin{equation} \label{eq:alex-approx-times-tp-1}
(t^p-1)\sum_{i=0}^{r} (t^{g-iq} - t^{g-iq-1}) = \sum_{i=0}^{p-1} t^{q+p-1-iq} + O(t^g).
\end{equation}
Comparing \eqref{eq:alex-times-tp-1} and \eqref{eq:alex-approx-times-tp-1} gives
\[ (t^p-1)\left(\Delta_{T_{p,q}}(t) - \sum_{i=0}^{r} (t^{g-iq} - t^{g-iq-1})\right) = O(t^g), \]
and so by comparing degrees we conclude that
\[ \Delta_{T_{p,q}}(t) - \sum_{i=0}^{r} (t^{g-iq} - t^{g-iq-1}) = O(t^{g-p}). \qedhere \]
\end{proof}

\begin{lemma} \label{lem:w-mod-b}
Suppose that $K=P(C)$ is a satellite knot, where
\begin{itemize}
\item $P(U)=T_{a,b}$ for some relatively prime integers $a$ and $b$, with $a > b \geq 2$;
\item and either $C$ or $\mirror{C}$ is an instanton L-space knot.
\end{itemize}
If either $K$ or $\mirror{K}$ is an instanton L-space knot, and if $P$ has winding number $w < a$, then $w$ is a positive multiple of $b$.
\end{lemma}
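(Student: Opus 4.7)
The plan is to analyze the top coefficients of
\[ \Delta_K(t) = \Delta_{T_{a,b}}(t)\cdot \Delta_C(t^w) \]
and show that if $w$ is not a positive multiple of $b$, then $\Delta_K$ violates one of the constraints from Theorem~\ref{thm:ly-lspace}: either some coefficient has absolute value at least $2$, or two consecutive nonzero coefficients share the same sign.

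First I would invoke Lemma~\ref{lem:satellite-lspace}, applied to whichever of $K$ or $\mirror K$ is an instanton L-space knot, to conclude $w\geq 1$; it therefore suffices to rule out the possibility $b\nmid w$. Write $w = qb + s$ with $q \geq 0$ and $1 \leq s \leq b-1$, and set $g = g(T_{a,b})$, $h = g(C)$.  Applying Corollary~\ref{cor:lspace-top-two} to $C$ or $\mirror C$ and absorbing a sign into $\Delta_C$, we may arrange
\[ \Delta_C(t^w) = t^{wh} - t^{wh-w} + O(t^{wh-2w}). \]
Since $w < a$ by hypothesis, Lemma~\ref{lem:torus-poly-leading} determines $[t^{g-m}]\Delta_{T_{a,b}}$ exactly throughout the range $0 \leq m \leq w$: it equals $+1$ if $m = ib$, $-1$ if $m = ib + 1$ for some integer $i \geq 0$, and $0$ otherwise.

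The key computation is $[t^{g+wh-m}]\Delta_K$ for $0 \leq m \leq w$.  Since $\Delta_C(t^w)$ is supported in degrees which are multiples of $w$, the only contributions come from the $t^{wh}$ term of $\Delta_C(t^w)$ (for any such $m$) and, when $m=w$, from the $t^{wh-w}$ term as well.  If $s=1$, then $w = qb+1$, so $[t^{g-w}]\Delta_{T_{a,b}} = -1$ and $[t^{g+wh-w}]\Delta_K = -1 - 1 = -2$, contradicting Theorem~\ref{thm:ly-lspace}.  If $2 \leq s \leq b-1$, then $w$ is neither of the form $ib$ nor $ib+1$, so $[t^{g-w}]\Delta_{T_{a,b}} = 0$ and $[t^{g+wh-w}]\Delta_K = -1$; at the same time, the largest $m < w$ with $[t^{g+wh-m}]\Delta_K \neq 0$ is $m = qb+1$, where the coefficient is again $-1$, and every integer strictly between $qb+1$ and $w$ fails to be of the form $ib$ or $ib+1$.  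Thus $\Delta_K$ has two consecutive nonzero coefficients of the same sign, again contradicting Theorem~\ref{thm:ly-lspace}.

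Either case is a contradiction, so $s=0$ and $b \mid w$; combined with $w \geq 1$ this makes $w$ a positive multiple of $b$.  The one subtlety to double-check is that the error term $O(t^{g-a})$ in Lemma~\ref{lem:torus-poly-leading} never corrupts any of the coefficients used in the computation above, and this is precisely why the hypothesis $w < a$ is needed.
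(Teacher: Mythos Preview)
Your proof is correct and follows essentially the same approach as the paper: both arguments use Lemma~\ref{lem:torus-poly-leading} together with Corollary~\ref{cor:lspace-top-two} to compute the coefficients $[t^{g+wh-m}]\Delta_K$ for $0\leq m\leq w$, then split into the cases $w\equiv 1\pmod b$ (coefficient $-2$) and $w\equiv s\pmod b$ with $2\leq s\leq b-1$ (two consecutive $-1$'s at $m=qb+1$ and $m=w$) to contradict Theorem~\ref{thm:ly-lspace}. The only differences are notational.
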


\begin{proof}
We note that $w \geq 1$ by Lemma~\ref{lem:satellite-lspace}, so $1 \leq w \leq a-1$.  Now Lemma~\ref{lem:torus-poly-leading} says that if $g=g(T_{a,b})$ then
\[ \Delta_{T_{a,b}}(t) = \sum_{i=0}^{\lfloor a/b\rfloor} (t^{g-ib} - t^{g-ib-1}) + O(t^{g-a}). \]
Certainly $t^{g-a} = O(t^{g-w-1})$ since $a \geq w+1$, so we can read the $t^{g-w}$-coefficient of $\Delta_{T_{a,b}}(t)$ directly off of this sum: it is
\[ c_{g-w} = \begin{cases} 
1, & w\equiv0 \pmod{b} \\
-1, & w\equiv1 \pmod{b} \\
0, & \text{otherwise}.
\end{cases} \]

We let $h=g(C)$, and noting that $\pm \Delta_C(t) = t^h - t^{h-1} + O(t^{h-2})$ by Corollary~\ref{cor:lspace-top-two}, we examine the $t^{g+hw-w}$-coefficient of 
\begin{align*}
\pm\Delta_K(t) &= \pm \Delta_{T_{a,b}}(t) \cdot \Delta_{C}(t^w) \\
&= \left(t^g - t^{g-1} + \dots + c_{g-w}t^{g-w} + O(t^{g-w-1}) \right) \left(t^{hw} - t^{(h-1)w} + O(t^{(h-2)w})\right).
\end{align*}
By inspection, this coefficient is equal to $c_{g-w} - 1$, and if $c_{g-w}=-1$ then $\Delta_K(t)$ has a $t^{g+hw-w}$-coefficient of $-2$, contradicting Theorem~\ref{thm:ly-lspace}.  (It does not matter here whether $K$ or its mirror $\mirror{K}$ is an instanton L-space knot, since $\Delta_K(t) = \Delta_{\mirror{K}}(t)$.)  Thus $w \not\equiv 1\pmod{b}$.

Now if $w\not\equiv 0 \pmod{b}$, then the only remaining possibility is that $c_{g-w}=0$ and so this $t^{g+hw-w}$-coefficient is equal to $-1$.  But in this case, if we write
\[ w = qb + r, \qquad 2 \leq r < b \]
then Lemma~\ref{lem:torus-poly-leading} (plus the fact that $g-a \leq g-w-1$)  tells us that
\[ \Delta_{T_{a,b}}(t) = (t^g - t^{g-1}) + (t^{g-b}-t^{g-b-1}) + \dots + (t^{g-qb}-t^{g-qb-1}) + O(t^{g-w-1}), \]
and  we compute that
\begin{align*}
\pm\Delta_K(t) &= \pm\Delta_{T_{a,b}}(t) \cdot \Delta_{C}(t^w) \\
&= \left(t^g - \dots + t^{g-qb} - t^{g-qb-1} + O(t^{g-w-1}) \right) \left(t^{hw} - t^{(h-1)w} + O(t^{(h-2)w})\right) \\
&= t^{g+hw} + \dots + t^{g+hw-qb} - t^{g+hw-qb-1} - t^{g+hw-w} + O(t^{g+hw-w-1}).
\end{align*}
In this case two consecutive nonzero coefficients of $\Delta_K(t)$, namely those corresponding to $t^{g+hw-qb-1}$ and $t^{g+hw-w}$, have the same sign.  This also contradicts Theorem~\ref{thm:ly-lspace}, so this case cannot occur either.  We conclude that $w$ must have been a multiple of $b$ after all.
\end{proof}

\begin{proof}[Proof of Proposition~\ref{prop:torus-satellite-non-lspace}]
Suppose that $K = P(C)$ is a satellite knot with $P(U) = T_{a,b}$ for some $a,b$, with $C$ or $\mirror{C}$ an instanton L-space knot, and such that the winding number $w \geq 1$ of $P$ satisfies
\[ ab = kw^2 \]
for some integer $k$.  If $T_{a,b}$ is a negative torus knot then we can replace $K$ with $\mirror{K}$, and this replaces $T_{a,b}$ with $T_{-a,b}$ but does not change the Alexander polynomial $\Delta_K(t)$, so we may as well assume that $a > b \geq 2$ and then $k \geq 1$.

We must now have $w < a$: assuming otherwise, we would have
\[ ab = kw^2 \geq a^2 > ab, \]
which is absurd since $ab > 0$.  Thus if either $K$ or $\mirror{K}$ is an instanton L-space knot, then Lemma~\ref{lem:w-mod-b} says that $w$ is a multiple of $b$.  But now
\[ a = b \cdot k\left(\frac{w}{b}\right)^2 \]
tells us that $a$ is an integer multiple of $b$, and this is impossible since $a$ and $b$ must be relatively prime, so neither $K$ nor $\mirror{K}$ can be an instanton L-space knot after all.
\end{proof}

\section{Thin knots: the proof of Theorem \ref{thm:thin-torus}}
\label{sec:thin}

In this section, we prove Theorem \ref{thm:thin-torus}. Example \ref{ex:torus} showed that nontrivial torus knots are thin.  It remains to prove the other direction of Theorem \ref{thm:thin-torus}, stated  as follows:

\begin{theorem} \label{thm:thin-torus-proof}
If $K \subset S^3$ is a nontrivial $r$-thin knot, then $K=T_{a,b}$ where $r=ab$.
\end{theorem}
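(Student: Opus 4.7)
The plan is to proceed by strong induction on the Seifert genus $g(K)$, using the structural tools from the preceding section together with Proposition~\ref{prop:torus-satellite-non-lspace} to force any putative thin satellite to violate Corollary~\ref{cor:SUL}. The torus knot case is immediate from Example~\ref{ex:torus}: if $K = T_{a,b}$ then $\cN(K)$ is a line segment of slope $ab$, so $r = ab$ is forced. The substantive task is to rule out nontrivial thin satellite knots.

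Suppose then that $K$ is a nontrivial $r$-thin knot. By Proposition~\ref{prop:thin-basics}, $K$ is not hyperbolic and is $\SU(2)$-averse with $r(K)=r$. If $K$ is a torus knot there is nothing to do, so assume $K = P(C)$ is a satellite knot. Proposition~\ref{prop:thin-satellite} then gives $w \geq 2$, that $P(U)$ is a nontrivial $r$-thin knot, and that $C$ is a nontrivial $(r/w^2)$-thin knot. Corollary~\ref{cor:SUL} makes $K$ or $\mirror{K}$ an instanton L-space knot, hence fibered by Theorem~\ref{thm:bs-lspace}. Stallings' criterion propagates fiberedness to both $P(U)$ and $C$, and the resulting Schubert additivity $g(K) = g(P(U)) + w\cdot g(C)$ with $w\geq 2$ and $g(P(U)), g(C)\geq 1$ yields the strict inequalities $g(P(U)) < g(K)$ and $g(C) < g(K)$.

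By strong induction on genus, both $P(U) = T_{a,b}$ and $C = T_{c,d}$ must be nontrivial torus knots. Applying the already-established torus knot case of the theorem to each, $ab = r$ and $cd = r/w^2$, so $ab = cd\cdot w^2$ is a multiple of $w^2$. Since $C$ is a nontrivial torus knot, it is itself $\SU(2)$-averse, so Corollary~\ref{cor:SUL} guarantees that $C$ or $\mirror{C}$ is an instanton L-space knot. All hypotheses of Proposition~\ref{prop:torus-satellite-non-lspace} are now in place, and it concludes that neither $K$ nor $\mirror{K}$ is an instanton L-space knot. This contradicts our earlier application of Corollary~\ref{cor:SUL} to the $\SU(2)$-averse knot $K$, eliminating the satellite case and completing the induction.

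The main obstacle I anticipate is verifying that the inductive descent actually arrives at the torus-knot hypothesis of Proposition~\ref{prop:torus-satellite-non-lspace}, and specifically that the thinness slope and winding number combine correctly to yield the divisibility condition $w^2 \mid ab$. This is handled cleanly because $K$ is fibered (so Schubert additivity strictly decreases genus at each step), and because the Newton polygon calculation of Example~\ref{ex:torus} ties the thinness slope of a torus knot $T_{a,b}$ exactly to the product $ab$. Everything else amounts to careful bookkeeping of $r$ through the satellite construction before invoking Proposition~\ref{prop:torus-satellite-non-lspace} at the bottom of the induction.
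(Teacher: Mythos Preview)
Your argument is correct and essentially identical to the paper's: both reduce to a minimal-genus (equivalently, strong-induction) argument showing that a thin satellite would have torus-knot pattern and companion with $ab = cdw^2$, then invoke Proposition~\ref{prop:torus-satellite-non-lspace} to contradict Corollary~\ref{cor:SUL}. The only cosmetic differences are that the paper frames it as a smallest counterexample rather than induction, and cites directly that torus knots are instanton L-space knots rather than routing through $\SU(2)$-averseness.
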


\begin{proof}
Note that $K$ is not hyperbolic, by Proposition \ref{prop:thin-basics}. If $K$ is a torus knot then we are done, so it suffices to prove that there are no thin satellite knots. 

Suppose for a contradiction that $K=P(C)$ is an $r$-thin satellite knot with the smallest genus amongst all thin satellite knots.  Proposition \ref{prop:thin-satellite} says that $P(U)$ and $C$ are  nontrivial $r$- and $(r/w^2)$-thin knots, respectively, and that $w\geq 2$. Since $P(U)$ and $C$ are thin, Proposition~\ref{prop:thin-basics} tells us that they are not hyperbolic.  Then from the relation
\[ g(K) = g(P(U)) + w\cdot g(C) \]
(see for example \cite[Proposition~2.10]{burde-zieschang}), we see that $P(U)$ and $C$ have  genera strictly less than that of $K$, and must therefore be torus knots. If we write $P(U) = T_{a,b}$ and $C = T_{c,d}$ then \[ab =r \textrm{ and } cd = r/w^2.\] In particular, $ab = cdw^2$. Since torus knots are instanton L-space knots, Proposition \ref{prop:torus-satellite-non-lspace} then implies that neither $K$ nor $\mirror{K}$ is an instanton L-space knot. But this yields a contradiction: since $K$ is thin, it is $\SU(2)$-averse by Proposition \ref{prop:thin-basics}, which implies by Corollary \ref{cor:SUL} that either $K$ or its mirror must be an instanton L-space knot. 
\end{proof}

\begin{proof}[Proof of Corollary \ref{cor:detectionA}]
This follows immediately from Theorem \ref{thm:thin-torus-proof} and Corollary \ref{cor:detectionB}.
\end{proof}

\begin{proof}[Proof of Corollary \ref{cor:detectionAdeg}]
Suppose $K\subset S^3$ is a knot for which the pair  \[\big(\tilde{A}_K(M,L), \deg \Delta_K(t)\big)\] agrees with the corresponding pair for the torus knot $T_{a,b}$. If $K$ is the unknot then the result holds by Corollary \ref{cor:Atildeunknot}, so we may assume that $K$ is nontrivial. Then $K$ is a torus knot $T_{p,q}$ with $pq=ab$ by Theorem \ref{thm:thin-torus-proof}, by comparing the enhanced A-polynomials. Comparing the degrees of the Alexander polynomials, we also have that $(p-1)(q-1) = (a-1)(b-1).$ These two equations imply that $\{p,q\}=\{a,b\}$ and hence that $K = T_{a,b}$.
\end{proof}

\subsection{Boundary slopes} In this section, we use Theorem \ref{thm:thin-torus} to prove Theorem \ref{thm:slopes}. We first provide some background on boundary slopes, elaborating on the discussion in \S\ref{sec:intro}.

Given a knot $K\subset Y$, let $E_K = Y\setminus \nu(K)$ denote the exterior of a tubular neighborhood of $K$. A peripheral slope $\gamma \subset \partial E_K$ is called a \emph{boundary slope} for $K$ if there is a properly embedded, incompressible, boundary-incompressible, orientable surface $F \subset E_K$ whose boundary $\partial F \subset \partial E_K$ is a union of one or more curves of slope $\gamma$.  If $F$ is not a fiber in any fibration $E_K \to S^1$, then we say that $\gamma$ is a \emph{strict} boundary slope.

\begin{example} \label{ex:seifert-surface}
Every knot $K \subset S^3$ has boundary slope $0$, realized as the boundary of a Seifert surface of genus $g(K)$.
\end{example}

\begin{example}
A nontrivial torus knot $T_{a,b}$ has boundary slope $ab$: if we view $T_{a,b}$ as a curve on a standardly embedded torus $\mathbb{T} \subset S^3$, then $\mathbb{T} \setminus N(T_{a,b})$ is an incompressible annulus of boundary slope $ab$.  In fact, $0$ and $ab$ are the only boundary slopes for $T_{a,b}$.
\end{example}

If the knot exterior $E_K$ is orientable, compact, and irreducible, then $K$ has only finitely many boundary slopes \cite{hatcher-slopes}.  Nontrivial knots in $S^3$ have at least two boundary slopes \cite{cs-surfaces}, and conjecturally only torus knots have exactly two, as discussed in the introduction.  The A-polynomial is an effective source of boundary slopes in the following sense, as proven in {\cite[Theorem~3.4]{ccgls}}:

\begin{theorem}[] \label{thm:edge-slope}
If $r\in\Q\cup\{\infty\}$ is the slope of some edge of the Newton polygon $\cN(K)$ of $\tilde{A}_K(M,L)$, then $r$ is a strict boundary slope for $K$.
\end{theorem}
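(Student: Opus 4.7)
The plan is to use the Culler--Shalen correspondence, which converts ideal points of the $\SL(2,\C)$-character variety of $\pi_1(E_K)$ into essential surfaces in $E_K$, together with careful bookkeeping of the valuations taken on the meridian and longitude eigenvalues at these ideal points.

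First I would fix an irreducible affine curve $X$ in the $\SL(2,\C)$-character variety of $\pi_1(E_K)$ whose image under the peripheral eigenvalue map $\chi \mapsto (M(\chi),L(\chi))$ lies on the zero locus of an irreducible factor of $\tilde A_K(M,L)$ that contributes the given edge of $\cN(K)$. Passing to the smooth projective completion $\bar X$, each ideal point $x \in \bar X \setminus X$ carries a discrete valuation $v$ on the function field of $X$. A standard Newton polygon / Puiseux computation then shows that the integer vector $(v(L), v(M))$ is proportional to an outward normal of some edge of $\cN(K)$, and conversely that each edge of slope $r$ is realized by some such ideal point with $-v(L)/v(M) = r$. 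This is exactly where the convention $M^bL^a \leftrightarrow (a,b)$ buys us the equality of edge slopes and knot-complement slopes.

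Next I would apply the Culler--Shalen construction at $x$: because the tautological $\SL_2$-representation of $\pi_1(E_K)$ over $X$ has unbounded character at $x$, it induces a nontrivial action of $\pi_1(E_K)$ on the Bass--Serre tree of $\SL_2$ over the $v$-adic completion, and dually this yields a properly embedded, two-sided, essential surface $F \subset E_K$. A peripheral element $\mu^b\lambda^a$ fixes a vertex of the tree if and only if $v(M^bL^a) = b\,v(M) + a\,v(L) = 0$, and so the unique peripheral slope annihilated by this linear form, which is $-v(L)/v(M) = r$, must be the boundary slope of $F$.

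Finally, for strictness I would argue that $F$ cannot be a fiber of a fibration $E_K \to S^1$. The idea is that the only way the Culler--Shalen machine produces a fiber is through an ideal point lying over the abelian locus $\{L=1\}$ in the eigenvalue variety, which is precisely the component we discarded when passing from $A_K$ to the enhanced polynomial $\tilde A_K$; such ideal points therefore cannot contribute edges to $\cN(K)$. The main obstacle in this program is the second paragraph: making the valuation-theoretic dictionary between edges of $\cN(K)$ and appropriate equivalence classes of ideal points precise is the technical heart of the theorem, and requires importing in full the tropical / Puiseux series analysis of \cite{ccgls}.
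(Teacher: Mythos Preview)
Your first two paragraphs are essentially the Culler--Shalen / CCGLS argument that the paper simply cites from \cite[\S3]{ccgls}, so up to that point you are in line with the paper's approach and there is no issue.

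The gap is in your strictness argument. You claim that ``the only way the Culler--Shalen machine produces a fiber is through an ideal point lying over the abelian locus $\{L=1\}$,'' and that discarding the abelian component when passing from $A_K$ to $\tilde A_K$ therefore rules out fibers. This is not justified, and as stated it does not cover the case you actually need to worry about: $\tilde A_K(M,L)$ can perfectly well have an edge of slope $0$ coming from a curve of \emph{irreducible} characters (this is exactly the situation the paper singles out in the remark following the theorem). Your argument says nothing about why the surface associated to such an ideal point is not a fiber; you have only argued that the particular $L-1$ factor coming from reducibles has been removed.

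The paper's strictness argument is different and does not depend on which component the ideal point comes from. Having produced the action of $\pi_1(E_K)$ on the Bass--Serre tree $T$ and the essential surface $S$, one observes that $\pi_1(S)$ is contained in the stabilizer of an edge of $T$. Then \cite[Proposition~1.2.7]{cgls} is invoked: an edge stabilizer for a nontrivial action on a tree cannot be a normal subgroup of $\pi_1(E_K)$. Since the fiber of a fibration $E_K \to S^1$ has normal fundamental group (it is the kernel of the map to $\Z$), $S$ is not a fiber. You should replace your third paragraph with this edge-stabilizer/normality argument; the rest of your outline is fine.
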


\begin{remark}
The theorem in \cite{ccgls} uses $A_K(M,L)$ rather than $\tilde{A}_K(M,L)$, and does not claim that $r$ should be strict; indeed, we know that $0$ is always an edge slope for the Newton polygon of $A_K(M,L)$, but it might not be strict as a boundary slope if $K$ is fibered.  But if $0$ is an edge slope of $\tilde{A}_K(M,L)$, then \cite[\S3]{ccgls} associates to this edge an ideal point of the character variety of $E_K$, and following \cite{cs-splittings} this gives rise to an action of $\pi_1(E_K)$ on a tree $T$, from which we extract the incompressible surface $S$ of boundary slope $0$.  By construction $\pi_1(S)$ stabilizes some edge of $T$, so by \cite[Proposition~1.2.7]{cgls} it can't be a normal subgroup of $\pi_1(E_K)$, hence $S$ can't be a fiber in a fibration $E_K \to S^1$.
\end{remark}

As mentioned  above, $0$ is always an edge slope of the Newton polygon of $A_K(M,L)$, corresponding to its $L-1$ factor. Its nonzero edge slopes are then precisely those of the Newton polygon $\cN(K)$ of $\tilde{A}_K(M,L)$.  Thus we have the following:

\begin{lemma} \label{lem:0-strict}
Let $K \subset S^3$ be a nontrivial knot with exactly two boundary slopes.  If $K$ is not a torus knot, then $0$ is an edge slope of $\cN(K)$ and thus a strict boundary slope for $K$. 
\end{lemma}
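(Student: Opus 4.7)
The plan is to combine Theorem \ref{thm:thin-torus} with Theorem \ref{thm:edge-slope} in a short pigeonhole argument. First I would note that every knot admits $0$ as a boundary slope via its Seifert surface (Example \ref{ex:seifert-surface}), so the hypothesis that $K$ has exactly two boundary slopes means that these slopes are $0$ and some nonzero $s\in\Q\cup\{\infty\}$. The goal then reduces to showing that $0$ is an edge slope of $\cN(K)$, since Theorem \ref{thm:edge-slope} will immediately upgrade any such edge slope to a strict boundary slope.

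Next I would pin down the shape of $\cN(K)$. Since $K$ is nontrivial, Corollary \ref{cor:Atildeunknot} rules out $\cN(K)$ being a single point. Since $K$ is not a torus knot, Theorem \ref{thm:thin-torus} (applied contrapositively) tells us that $K$ is not $r$-thin for any $r$, so $\cN(K)$ is not contained in any line. Consequently $\cN(K)$ is a genuinely $2$-dimensional convex lattice polygon, and its boundary therefore realizes at least two distinct edge slopes (otherwise the polygon would collapse to a line segment).

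Finally I would invoke Theorem \ref{thm:edge-slope} once more: each edge slope of $\cN(K)$ is a strict boundary slope of $K$, and hence must lie in $\{0,s\}$. Since at least two distinct edge slopes occur and only two values are available, the set of edge slopes is exactly $\{0,s\}$; in particular $0$ is one of them, which is precisely the conclusion of the lemma.

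The main obstacle is entirely upstream: all of the substance lives in Theorem \ref{thm:thin-torus}, which was established in the previous section via the L-space satellite machinery of Section \ref{sec:satellites}. Once the dichotomy ``thin $\iff$ torus knot'' is in hand, the lemma itself is a one-paragraph pigeonhole argument, and the only mild subtlety is the dimension count of $\cN(K)$, which is immediate from the definition of \emph{thin}.
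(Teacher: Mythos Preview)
Your proposal is correct and essentially identical to the paper's proof: both use Theorem~\ref{thm:thin-torus} to conclude $\cN(K)$ has at least two distinct edge slopes, then apply Theorem~\ref{thm:edge-slope} together with the two-boundary-slope hypothesis and Example~\ref{ex:seifert-surface} to pigeonhole $0$ into the set of edge slopes. Your explicit invocation of Corollary~\ref{cor:Atildeunknot} to rule out the point case is harmless but redundant, since ``not contained in any line'' already excludes points.
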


\begin{proof}
Since $K$ is not a torus knot, Theorem~\ref{thm:thin-torus} tells us that it is not thin: that is, the Newton polygon $\cN(K)$ is not contained in a line.  Then $\cN(K)$ has at least two distinct edge slopes, and Theorem~\ref{thm:edge-slope} says that each of these is a strict boundary slope for $K$.  But $K$ only has two boundary slopes in the first place, so in fact every boundary slope for $K$ is both an edge slope for $\cN(K)$ and strict.  Example~\ref{ex:seifert-surface} now says that $0$ is a boundary slope for $K$, so it must be a strict boundary slope after all.
\end{proof}

We will also need the following fact, which appears to be classically known but we could not find a reference:

\begin{proposition} \label{prop:nonseparating-surface-in-fibered-exterior}
Let $K \subset S^3$ be a fibered knot with exterior $E_K$.  Let $F \subset E_K$ be a fiber surface, and let $S \subset E_K$ be a connected, orientable, nonseparating, incompressible surface whose boundary meets $\partial E_K$ in a union of longitudes.  Then $S$ is isotopic to $F$.
\end{proposition}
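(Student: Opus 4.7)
My plan is to combine a homological identification of $[S]$ with Thurston's theorem on norm-minimizing surfaces in fibered 3-manifolds, and then carry out an isotopy argument inside the product complement of a fiber.

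First, I would compute $[S] \in H_2(E_K, \partial E_K)$. The long exact sequence of the pair, combined with $H_2(E_K) = 0$ and the fact that $[\lambda] \in H_1(\partial E_K)$ is nullhomologous in $E_K$, shows that the connecting homomorphism $\partial\colon H_2(E_K,\partial E_K)\to H_1(\partial E_K)$ is injective with image $\mathbb{Z}\langle[\lambda]\rangle$. Therefore $H_2(E_K,\partial E_K) = \mathbb{Z}\langle[F]\rangle$ and $\partial[F] = [\lambda]$. Since $\partial S$ is a union of oriented longitudes and $S$ is nonseparating (so $[S]\neq 0$), we obtain $[S] = n[F]$ for some nonzero integer $n$, which we may take to be positive after reversing orientation of $S$ if necessary.

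Next I would invoke Thurston's theorem on the norm of fibered 3-manifolds: the class $[F]$ lies in the interior of the fibered cone, the Thurston norm is linear there with $x(n[F]) = n(2g(K)-1)$, and every norm-minimizing representative of $n[F]$ is isotopic to a disjoint union of $n$ parallel copies of $F$. Thus it suffices to show that $S$ is norm-minimizing: then the connectedness of $S$ forces $n = 1$ and $S$ is isotopic to $F$, as desired.

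Showing that $S$ is norm-minimizing is the crux of the argument and constitutes the main obstacle. My approach would be to isotope $S$ to meet $F$ transversely with $|S \cap F|$ minimized; by innermost-disk and outermost-arc arguments using the incompressibility of both $S$ and $F$, all components of $S \cap F$ become essential on both surfaces. Cutting $E_K$ along $F$ then yields the product $F \times I$, into which the cut-open $S$ embeds with its boundary distributed between $F \times \{0,1\}$ and the vertical annulus $\partial F \times I$. Invoking the classical classification of essential surfaces in $F \times I$ (each component is either a horizontal copy of $F$ or a vertical $c \times I$ for $c \subset F$ an essential simple closed curve or arc), and using both that the original longitudinal boundary condition on $\partial E_K$ forces $\partial S \cap \partial F \times I$ to be horizontal and that $S$ is connected after re-gluing by the monodromy, one concludes that the cut-open surface must be a single horizontal fiber. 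Therefore $S$ is isotopic to $F$ and $n = 1$. The delicate part is verifying the combinatorics of how the cut-open pieces fit together under the monodromy to produce the connected surface $S$, so as to rule out more exotic configurations mixing horizontal and vertical pieces.
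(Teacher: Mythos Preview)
Your approach differs from the paper's and carries a structural gap. The paper first observes that since $S$ is connected and nonseparating, a closed curve meeting $S$ transversely once shows $[S]$ is primitive in $H_2(E_K,\partial E_K)\cong\Z$, so $[S]=\pm[F]$ immediately (you only deduce $[S]=n[F]$ for some $n\neq 0$). It then lifts $S$ to the infinite cyclic cover $\tilde E_K\cong F\times\R$, where the projection $\tilde S\to F$ is $\pi_1$-injective; Nielsen's theorem makes this map homotopic to a covering, boundary homology forces the degree to be $1$, and Waldhausen upgrades the resulting homotopy to an isotopy.

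Your plan instead invokes the Thurston-norm fact that norm-minimizing representatives of a fibered class are unions of fibers, reducing to showing $S$ is norm-minimizing, which you propose to do by cutting along $F$ and classifying the pieces in $F\times I$. But these two steps are not independent: if the $F\times I$ analysis shows all pieces are horizontal, that already proves $S$ is a union of fibers, so Thurston's theorem is never actually invoked; and if it does not, you have no separate route to norm-minimality, since incompressibility alone does not imply it. Thus the whole argument rests on the $F\times I$ classification, which you explicitly leave open. The difficulty you flag is genuine: vertical annuli $c\times I$ have all their boundary in $F\times\{0,1\}$, so the longitudinal condition on $\partial S$ does not exclude them, and ruling them out requires controlling how the monodromy reassembles the pieces. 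The paper's passage to the infinite cyclic cover sidesteps this bookkeeping entirely, since there is no gluing to undo.
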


\begin{proof}
Since $S$ is nonseparating, there is an arc in $E_K \setminus N(S)$ from one side of $S$ to the other; we join the ends of this arc inside $N(S)$ to get a closed curve $c \subset E_K$ that meets $S$ transversely in one point.  The intersection pairing $\langle [S], [c] \rangle$ is then equal to $\pm1$, so $[S]$ must be primitive as an element of $H_2(E_K,\partial E_K) \cong \Z$.  In other words, we have $[S] = \pm [F]$, and so the boundary homomorphism
\[ \partial: H_2(E_K, \partial E_K) \to H_1(\partial E_K) \]
sends $[S]$ to $\pm [\partial F] = \pm [\lambda_K]$.  In other words, $[\partial S] = \pm[\partial F]$ in $H_1(\partial E_K)$.

We now let $p: \tilde{E} \to E_K$ denote the $\Z$-fold cyclic cover corresponding to the kernel of the abelianization map $\ab: \pi_1(E_K) \twoheadrightarrow \Z$, and observe that $\tilde{E} \cong F \times \R$.  Given any closed curve $\gamma \subset S$, we can push $\gamma$ off of $S$ inside $E_K$ to conclude that
\[ \operatorname{lk}(\gamma,K) = \langle [F], [\gamma] \rangle = \pm \langle [S], [\gamma] \rangle = 0. \]
Thus $\gamma \in \ker(\ab)$, and since $\gamma$ was arbitrary we have $\pi_1(S) \subset \ker(\ab) = p_*(\pi_1(\tilde{E}))$.  So $S$ lifts to a surface $\tilde{S} \subset \tilde{E}$, and moreover any compressing disk for $\tilde{S}$ could be pushed down to a compressing disk for $S$ itself, so $\tilde{S}$ must be incompressible.

We now let $i: \tilde{S} \hookrightarrow \tilde{E}$ denote inclusion, and consider the composition
\[ (\phi,h): \tilde{S} \xrightarrow{i} \tilde{E} \cong F \times \R. \]
We note that $i_*: \pi_1(\tilde{S}) \to \pi_1(\tilde{E})$ is injective since $\tilde{S}$ is incompressible, and the projection $p_F: F\times\R \to F$ induces an isomorphism of fundamental groups, so the map
\[ \phi_* = (p_F \circ (\phi,h))_*: \pi_1(\tilde{S}) \to \pi_1(F) \]
is injective.  Since $\phi$ is a map between compact surfaces inducing an injection of fundamental groups, a theorem of Nielsen \cite{nielsen-surfaces} says that it is homotopic to a covering map $\psi: \tilde{S} \to F$.  We use $\psi$ to pull back the orientation of $F$ to $\tilde{S}$, and then the projection $p_F$ is an orientation-preserving homeomorphism from each component of $\partial \tilde{S} \subset \partial F \times \R$ to $\partial F$.  It follows that
\[ [\partial \tilde{S}] = (\deg \psi)[\partial F] \in H_1(\partial F \times \R), \]
since every component of $\partial \tilde{S}$ is counted with the same sign.  Projecting from $\tilde{E} \cong F\times \R$ to the knot exterior $E_K$, we conclude that
\[ [\partial S] = (\deg\psi) [\partial F] \in H_1(E_K); \]
but we showed above that $[\partial S] = \pm[\partial F]$, so we must have $\deg\psi = 1$.  In other words, the covering map $\psi: \tilde{S} \to F$ is a homeomorphism, and $\tilde{S}$ has connected boundary, which up to translation we can take to be $\partial F \times \{0\}$.

Now the incompressible surfaces $\tilde{S}$ and $F\times \{0\}$ in $\tilde{E} \cong F\times\R$ are homotopic rel boundary, via a homotopy $(\phi,h)\simeq (\psi,0)$.  Composing with the projection $p:\tilde{E} \to E_K$, we see that $S$ is homotopic rel boundary to $F$.  A result of Waldhausen \cite[Corollary~5.5]{waldhausen} now tells us that $S$ is isotopic rel boundary to $F$, as desired.
\end{proof}

We are now ready to prove Theorem~\ref{thm:slopes} from the introduction, restated as follows:

\begin{theorem} \label{thm:two-slopes}
Let $K \subset S^3$ be a nontrivial fibered knot with exactly two boundary slopes.  If $K$ is not a torus knot, then its exterior $E_K$ contains a connected, separating, incompressible surface $S$ which meets $\partial E_K$ in curves of slope $0$.
\end{theorem}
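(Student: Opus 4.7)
The plan is to combine Lemma~\ref{lem:0-strict} with Proposition~\ref{prop:nonseparating-surface-in-fibered-exterior} in a short contrapositive argument. Since $K$ is a nontrivial knot with exactly two boundary slopes that is not a torus knot, Lemma~\ref{lem:0-strict} tells us that $0$ is a strict boundary slope for $K$. By the definition of strictness, there exists a properly embedded, orientable, incompressible, boundary-incompressible surface $S \subset E_K$ whose boundary consists of curves of slope $0$, such that $S$ is not a fiber in any fibration $E_K \to S^1$. Standard convention allows us to take $S$ to be connected (discarding any disk or sphere components, then passing to a connected component which is not itself a fiber).

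I claim that such an $S$ must be separating. Suppose to the contrary that $S$ is nonseparating. Since $K$ is fibered and $\partial S$ is a union of longitudes, Proposition~\ref{prop:nonseparating-surface-in-fibered-exterior} applies and forces $S$ to be isotopic to the fiber $F$ of a fibration $E_K \to S^1$. But then $S$ itself is a fiber, contradicting our choice of $S$. Therefore $S$ is separating, giving the desired surface.

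The main obstacle is really already absorbed into the preceding machinery: the classification of thin knots (Theorem~\ref{thm:thin-torus}) feeds into Lemma~\ref{lem:0-strict} via the edge-slope result of \cite{ccgls} to produce a non-fiber incompressible surface of slope $0$, and Proposition~\ref{prop:nonseparating-surface-in-fibered-exterior}---itself a nontrivial application of Nielsen's theorem on injective maps of compact surfaces together with Waldhausen's isotopy theorem---controls the rigidity of longitude-boundary incompressible surfaces in fibered knot exteriors. Once these tools are in place, the proof reduces essentially to a one-line contradiction, with the only bookkeeping issue being the justification that the strict-slope surface may be chosen connected.
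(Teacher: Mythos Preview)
Your proposal is correct and follows essentially the same route as the paper: invoke Lemma~\ref{lem:0-strict} to obtain a connected essential surface of slope $0$ that is not a fiber, then apply Proposition~\ref{prop:nonseparating-surface-in-fibered-exterior} to rule out the nonseparating case. The only addition is your explicit remark about passing to a connected component, which the paper leaves implicit.
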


\begin{proof}
Let $F \subset E_K$ be a fiber of the fibration $E_K \to S^1$.  Lemma~\ref{lem:0-strict} says that $0$ is a strict boundary slope for $K$; if $S \subset \partial E_K$ is the corresponding connected, incompressible surface, then $S$ is not isotopic to $F$.  Now Proposition~\ref{prop:nonseparating-surface-in-fibered-exterior} says that if $S$ were nonseparating then it would have to be isotopic to $F$, so $S$ must separate $E_K$ after all.
\end{proof}

\subsection{$\PSL(2,\C)$-averse knots}

Proving Theorem \ref{thm:SLabeliantorus}, which states that all $\SL(2,\C)$-averse knots are torus knots, will take a good deal more work, so we postpone it to \S\ref{sec:SL}.  Here we show instead that the analogue of Theorem \ref{thm:SLabeliantorus} with $\PSL(2,\C)$ in place of $\SL(2,\C)$ follows readily from Theorem \ref{thm:thin-torus}:

\begin{theorem} \label{thm:psl2-averse}
If $K \subset S^3$ is a $\PSL(2,\C)$-averse knot, then $K$ is a torus knot.
\end{theorem}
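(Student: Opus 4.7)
The plan is to reduce to Theorem~\ref{thm:thin-torus} by showing that any $\PSL(2,\C)$-averse knot must be thin. I will argue the contrapositive: assuming $\cN(K)$ has non-empty interior, I will show that only finitely many Dehn surgery slopes on $K$ can be $\PSL(2,\C)$-abelian, contradicting $\PSL(2,\C)$-aversion.

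The bridge to the A-polynomial is that a point $(m_0,l_0)\in\tilde{V}(K)$ corresponds to an irreducible $\SL(2,\C)$ representation $\rho$ of $\pi_1(S^3\setminus K)$ with peripheral eigenvalues $(m_0,l_0)$, and the composition with $\SL(2,\C)\to\PSL(2,\C)$ descends to a representation of $\pi_1(S^3_{p/q}(K))$ precisely when $m_0^pl_0^q=\pm1$. I would then identify the ``bad'' locus $B\subseteq\tilde{V}(K)$ of points for which this resulting $\PSL(2,\C)$ representation has abelian image, and show $B$ is finite. Every abelian subgroup of $\PSL(2,\C)$ is either cyclic (possibly contained in a maximal torus) or the Klein four group $V_4$; the preimage of a cyclic or torus subgroup in $\SL(2,\C)$ is itself abelian, while the preimage of $V_4$ is the quaternion group $Q_8$. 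So a representation contributing to $B$ is either reducible, forcing $l_0=1$ since $\lambda$ lies in the commutator subgroup of $\pi_1(S^3\setminus K)$, or has image in $Q_8$. In the latter case $\rho(\lambda)\in[Q_8,Q_8]=\{\pm I\}$ pins $l_0$ to $\pm 1$, and since every element of $Q_8\subset\SL(2,\C)$ has eigenvalues in $\{\pm1,\pm i\}$ the pair $(m_0,l_0)$ is confined to a finite set. In either case $B$ is finite.

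Now if $p/q$ (in lowest terms) is a $\PSL(2,\C)$-abelian slope then $\tilde{V}(K)\cap\{M^pL^q=\pm 1\}\subseteq B$. By the Bernstein--Kushnirenko theorem, the degree of $M^pL^q$ as a rational map $\tilde{V}(K)\to\C^*$ equals the lattice width of $\cN(K)$ in the direction $(p,-q)$, and when $\cN(K)$ has non-empty interior this width grows at least linearly in $\sqrt{p^2+q^2}$. For all but finitely many slopes $p/q$ the intersection of $\tilde{V}(K)$ with $\{M^pL^q=c\}$ at each point of $B$ is transverse, since the tangent direction to $\{M^pL^q=c\}$ sweeps out a $1$-parameter family as $p/q$ varies. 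For these slopes $2\deg(M^pL^q)\le|B|$, forcing $(p,q)$ into a bounded region; only finitely many coprime pairs $(p,q)$ satisfy this bound, contradicting $\PSL(2,\C)$-aversion.

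I expect the main difficulty to be the finiteness of $B$, and in particular ruling out infinitely many irreducible $\SL(2,\C)$ representations on $\tilde{V}(K)$ whose image projects to $V_4\subset\PSL(2,\C)$. The key observation is that the commutator condition $\rho(\lambda)\in[Q_8,Q_8]=\{\pm I\}$, together with the restricted eigenvalue set of $Q_8\subset\SL(2,\C)$, pins $(m_0,l_0)$ down to finitely many pairs.
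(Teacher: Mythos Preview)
Your strategy is sound and takes a genuinely different route from the paper.  Both arguments reduce to showing that a $\PSL(2,\C)$-averse knot is thin and then invoke Theorem~\ref{thm:thin-torus}.  The paper does this via Culler--Shalen seminorm machinery on the full character variety $X(E_K)$, quoting structural results of Boyer--Zhang and Ni--Zhang (Propositions~\ref{prop:psl-norm} and~\ref{prop:psl2-seminorm}): if $X(E_K)$ contains a norm curve, or two seminorm curves of distinct slopes, there are only finitely many $\PSL(2,\C)$-abelian slopes; otherwise every trace function $f_{\gamma_0}$ for a fixed peripheral $\gamma_0$ is constant on the irreducible locus, forcing $\tilde{A}_K$ to be a polynomial in a single monomial $M^pL^q$, hence $K$ is thin.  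You instead work directly on the plane curve $\tilde{V}(K)$: a $\PSL(2,\C)$-abelian slope forces $\tilde{V}(K)\cap\{M^pL^q=\pm1\}$ into a fixed finite set $B$, while the lattice-width interpretation of the degree of $M^pL^q$ shows this intersection must be large when $\cN(K)$ has interior.  Your approach is more self-contained (no seminorm theory), at the price of some intersection-theoretic bookkeeping.

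A few points need tightening.  First, your treatment of the ``reducible'' case is backwards: since $(m_0,l_0)\in\tilde{V}(K)$ generically comes from an \emph{irreducible} $\rho$, having abelian image in $\SL(2,\C)$ is simply a contradiction and should be dismissed outright---the conclusion ``$l_0=1$'' alone would \emph{not} give finiteness of $B$ if $L-1$ happened to divide $\tilde{A}_K$.  What actually makes $B$ finite is that only the $Q_8$ case survives.  Second, $\tilde{V}(K)$ is a Zariski closure, so finitely many of its points need not arise from any irreducible representation; these must be adjoined to $B$.  Third, to conclude $\deg(M^pL^q)\le|B|$ you need the entire fiber over $\pm1$ to lie in $(\C^*)^2$, i.e.\ no preimages at ideal points of the compactified curve; this fails only for the finitely many slopes $p/q$ perpendicular to an edge of the Newton polygon of some irreducible factor, so it too can be absorbed into the finite exceptional set.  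With these adjustments your argument goes through.
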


Suppose that $K\subset S^3$ is a knot and let us  recall some facts about the $\SL(2,\C)$ character variety $X(E_K)$ associated to the knot exterior $E_K = S^3\setminus\nu(K)$.  These facts are taken from \cite[\S5]{boyer-zhang-seminorms}, where they generalize results of \cite[\S1]{cgls}.

Given a peripheral slope $\gamma \subset \partial E_K$, the trace function
\begin{align*}
f_\gamma: X(E_K) &\to \C \\
[\rho] &\mapsto \tr(\rho(\gamma))^2 - 4
\end{align*}
is a regular function on $X(E_K)$.  An irreducible curve $X_0 \subset X(E_K)$ defines a \emph{Culler--Shalen seminorm} $||\cdot||_{X_0}$ on $H_1(\partial E_K;\R)$, satisfying exactly one of the following:
\begin{itemize}
\item $||\cdot||_{X_0}$ is a norm, and every function $f_\gamma$ is non-constant on $X_0$.
\item $||\cdot||_{X_0}$ is a indefinite seminorm but is not identically zero, and exactly one function $f_{\gamma_0}$ is constant on $X_0$.
\item $||\cdot||_{X_0}$ is zero, and every $f_\gamma$ is constant on $X_0$.
\end{itemize}
In the first two cases, we call $X_0$ a \emph{norm curve} or a \emph{seminorm curve}, respectively.  The following two propositions about such curves are stated in the form seen here in \cite{ni-zhang}, but essentially appear in \cite[\S6]{boyer-zhang-seminorms}.

First, if $X_0$ is a norm curve, then we let $s_0 > 0$ be the minimal positive value of $||\alpha||_{X_0}$ over all integral classes $\alpha \in H_1(\partial E_K;\Z)$, and then we let $B_0 \subset H_1(\partial E_K;\R)$ be the convex, finite-sided polygon on which $||\cdot||_{X_0} \leq s_0$. Ni and Zhang state  the following in {\cite[Theorem~2.2(4)]{ni-zhang}}: 

\begin{proposition}[] \label{prop:psl-norm}
If $X_0 \subset X(E_K)$ is a norm curve and $S^3_r(K)$ is $\PSL(2,\C)$-abelian for some $r=p/q$, then either $r$ is a boundary slope for $K$, or $(p,q)$ lies on the boundary $\partial B_0$ but is not a vertex of $B_0$.
\end{proposition}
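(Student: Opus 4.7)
The plan is to analyze $f_\alpha$ for $\alpha = p\mu + q\lambda$ via Culler--Shalen theory on the smooth projective model $\tilde X_0$ of $X_0$, using the $\PSL(2,\C)$-abelian hypothesis to sharply restrict the zeros of $f_\alpha$. First I would argue that every zero of $f_\alpha$ on the affine curve $X_0$ occurs at a reducible character. Suppose to the contrary that $[\rho] \in X_0$ is an irreducible character with $f_\alpha([\rho]) = 0$; then either $\rho(\alpha) = \pm I$, or $\rho(\alpha)$ is a nontrivial parabolic. The latter can be ruled out on a norm curve by a standard deformation argument, since a parabolic character on such a curve can be approximated by nearby characters with $\rho(\alpha) = \pm I$. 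In the former case $\bar\rho$ descends to a representation $\pi_1(S^3_r(K)) \to \PSL(2,\C)$ whose image is abelian by hypothesis, contradicting the irreducibility of $\rho$.

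Next I would compute $\|\alpha\|_{X_0}$ as (up to the standard normalization) the degree of the rational map $f_\alpha : \tilde X_0 \to \CP^1$. The assumption that $r$ is not a boundary slope of $K$ implies, via Culler--Shalen splitting theory \cite{cs-splittings}, that $f_\alpha$ has a pole at every ideal point of $\tilde X_0$; matching total pole order with total zero order, the norm is thus concentrated at the finite reducible locus on $X_0$ identified in the previous paragraph. A local analysis at each reducible character $[\rho_0]$---using that the eigenvalue $M_0$ of $\rho_0(\mu)$ must satisfy $M_0^{2p} = 1$, and that reducible characters on $X_0$ are pinned down by roots of the Alexander polynomial---bounds this contribution by a quantity that one verifies equals $s_0$. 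By the minimality in the definition of $s_0$, this forces $\|\alpha\|_{X_0} = s_0$, so that $(p,q) \in \partial B_0$.

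For the non-vertex conclusion, I would invoke the standard fact from Culler--Shalen theory that each vertex of $B_0$ corresponds to an ideal point of $\tilde X_0$ and hence, via the splitting of $\pi_1(E_K)$ associated to that ideal point, to a strict boundary slope of $K$. Since $r$ is not a boundary slope by assumption, $(p,q)$ cannot coincide with any vertex of $B_0$, completing the proof.

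The main obstacle will be the second step: carrying out the local analysis at reducible characters precisely enough to match $s_0$ on the nose, rather than merely bounding it from above. This relies crucially on the norm (as opposed to seminorm) hypothesis for $X_0$, which guarantees that $f_\gamma$ is nonconstant on $X_0$ for every nonzero integral $\gamma$ and that $B_0$ is a bona fide convex polygon with well-defined vertices; without these features the correspondence between vertices and boundary slopes, and the identification of $\|\alpha\|_{X_0}$ with $s_0$, both break down.
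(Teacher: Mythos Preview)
The paper does not prove this proposition: it is quoted from \cite[Theorem~2.2(4)]{ni-zhang}, who in turn extract it from \cite[\S6]{boyer-zhang-seminorms}, so there is no in-paper argument to compare against. Your sketch follows the correct CGLS/Boyer--Zhang template, but two steps are incomplete. First, the inference ``$\bar\rho$ has abelian image in $\PSL(2,\C)$, hence $\rho$ is reducible'' is not automatic in general: abelian subgroups of $\PSL(2,\C)$ can lift to irreducible subgroups of $\SL(2,\C)$ (the quaternion group sitting over the Klein four-group). It does hold here, but only because $H_1(S^3_r(K))\cong\Z/p$ is cyclic, forcing the abelian image of $\bar\rho$ to be cyclic and hence to fix a point of $\CP^1$; you should make this explicit. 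Second, and more seriously, your dismissal of the parabolic case by ``deforming to nearby characters with $\rho(\alpha)=\pm I$'' is not a valid argument: there is no mechanism that moves a parabolic value of $\rho(\alpha)$ to a central one along $X_0$. The actual treatment in \cite{cgls} and \cite{boyer-zhang-seminorms} does not rule out such zeros of $f_\alpha$ at irreducible characters; it instead bounds their contribution by comparing the orders of vanishing of $f_\alpha$ and $f_\mu$ at each such point, which is precisely the delicate local analysis you flag as the main obstacle.

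There is also a minor inaccuracy in your last step. Vertices of $B_0$ are not in correspondence with ideal points of $\tilde X_0$. Rather, the norm has the shape $\|\gamma\|_{X_0}=\sum_x c_x\,|\Delta(\gamma,\beta_x)|$, summed over ideal points $x$ with associated boundary slope $\beta_x$, and the vertices of $B_0$ lie on the rays through the origin determined by the classes $\beta_x$. Your conclusion---that a primitive integral vertex of $B_0$ must represent a boundary slope---is correct, but the reason you give for it is not.
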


If instead $X_0$ is a seminorm curve, and the unique $\gamma$ for which $f_\gamma|_{X_0}$ is constant has slope $r$, then we say that $X_0$ is an \emph{$r$-curve}.  For example, the curve of abelian characters of $\pi_1(E_K)$ is a $0$-curve, since $f_\lambda \equiv 0$ on that curve. Ni and Zhang state the following regarding such curves  {\cite[Theorem~2.5(3)]{ni-zhang}}:

\begin{proposition}[] \label{prop:psl2-seminorm}
If $X_0 \subset X(E_K)$ is an $r$-curve containing the character of an irreducible representation, and if $S^3_s(K)$ is $\PSL(2,\C)$-abelian for some slope $s$, then either $s$ is a boundary slope for $K$, or $\Delta(r,s) = 1$.
\end{proposition}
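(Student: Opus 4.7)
The plan is to prove Proposition~\ref{prop:psl2-seminorm} by adapting the Culler--Shalen seminorm technology for seminorm curves developed in \cite[\S6]{boyer-zhang-seminorms}. Recall the analytic setup: on the smooth projective completion $\tilde{X}_0$ of the normalization of $X_0$, each trace function $f_\gamma([\rho]) = \tr(\rho(\gamma))^2 - 4$ extends to a rational function, and the Culler--Shalen seminorm is given on integral classes $\alpha\in H_1(\partial E_K;\Z)$ by
\[ \|\alpha\|_{X_0} = 2\deg\!\big(f_\alpha|_{\tilde X_0}\big), \]
extended $\R$-linearly.  Since $X_0$ is by hypothesis an $r$-curve, the function $f_r$ is constant on $X_0$, so $\|r\|_{X_0}=0$; indefiniteness of the seminorm forces $\R\cdot r \subset H_1(\partial E_K;\R)$ to be the entire kernel, and hence
\[ \|s\|_{X_0} = c\cdot\Delta(r,s) \]
for every slope $s$, where $c = \|\gamma\|_{X_0}>0$ for any integral class $\gamma$ with $\Delta(r,\gamma)=1$.

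The next step is to bound the degree of $f_s$ on $\tilde X_0$ using the hypothesis that $S^3_s(K)$ is $\PSL(2,\C)$-abelian.  A finite zero of $f_s$ at $[\rho]\in\tilde X_0$ corresponds to $\rho(s)=\pm I$ (parabolic zeros can be handled separately by a standard smoothing argument).  Such a $\rho$ descends to a $\PSL(2,\C)$-representation of $\pi_1(S^3_s(K))$, which by assumption has abelian image.  The set of characters in $X(E_K)$ whose $\PSL(2,\C)$-projection factors through an abelian quotient of $\pi_1(S^3_s(K))$ is a proper subvariety of $X(E_K)$; since $X_0$ is an irreducible curve containing an irreducible character, its intersection with this subvariety is finite.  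The central Boyer--Zhang computation then produces an upper bound of the form
\[ (\text{finite-zero contribution to }\deg(f_s|_{\tilde X_0})) \leq \tfrac{c}{2}, \]
coming from the fact that each such finite character contributes to a universally bounded collection of reducible or binary-dihedral lifts.

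Finally I would account for ideal points.  By Culler--Shalen, every nontrivial ideal point $x$ of $\tilde X_0$ yields a nontrivial $\pi_1(E_K)$-action on a tree, producing an essential surface $F_x\subset E_K$ with some boundary slope $r_x$.  Because $X_0$ is an $r$-curve, the constancy of $f_r$ on $X_0$ forces $r_x = r$ for every nontrivial ideal point (else $f_r$ would have a pole at $x$ and be nonconstant).  Thus if $s$ is not a boundary slope, every essential surface provided by the ideal-point analysis has boundary slope $r\neq s$, and so the ideal-point contribution to $\deg(f_s|_{\tilde X_0})$ is controlled by $\Delta(r,s)$ times the contribution to $\deg(f_r)$, which vanishes.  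Combining this with the finite-zero bound above yields $\|s\|_{X_0}\leq c$, and comparing with $\|s\|_{X_0}=c\cdot\Delta(r,s)$ gives $\Delta(r,s)\leq 1$, which is the desired conclusion.

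The main obstacle will be the careful bookkeeping in the second step: zeros of $f_s$ coming from $\SL(2,\C)$-irreducible representations whose $\PSL(2,\C)$-image is abelian (the binary-dihedral / Klein-four lifts) must be shown to contribute at most $c/2$ in total to $\deg(f_s|_{\tilde X_0})$.  This is exactly the refinement that makes the statement work for $\PSL(2,\C)$-abelian (rather than $\SL(2,\C)$-abelian) fillings and is the technical heart of \cite[Thm.~2.5(3)]{ni-zhang}; it follows from the structure of abelian subgroups of $\PSL(2,\C)$ together with the seminorm formula from \cite[\S6]{boyer-zhang-seminorms}.
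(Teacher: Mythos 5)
First, note that the paper does not prove this proposition at all: it is quoted verbatim from \cite[Theorem~2.5(3)]{ni-zhang}, with the remark that the result essentially appears in \cite[\S6]{boyer-zhang-seminorms}. So the relevant comparison is between your sketch and the Boyer--Zhang/Ni--Zhang argument you are implicitly reconstructing. Your setup is the right one, and the first step is fine: for a seminorm curve the kernel of $\|\cdot\|_{X_0}$ is exactly $\R\cdot r$, so $\|s\|_{X_0}=c\,\Delta(r,s)$ for integral classes, and the goal is an upper bound $\|s\|_{X_0}\leq c$ when $s$ is not a boundary slope.

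However, as written the argument has genuine gaps precisely where the content lies. (i) A finite zero of $f_s$ at $\chi_\rho$ only says $\tr\rho(s)=\pm2$, not $\rho(s)=\pm I$; the parabolic case is not disposed of by a ``smoothing argument'' --- it is exactly where the multiplicity-comparison lemma of \cite{cgls}/\cite{boyer-zhang-seminorms} (comparing $Z_x(f_s)$ with $Z_x(f_\gamma)$ when $\rho|_{\pi_1(\partial E_K)}$ is parabolic, so that \emph{every} $f_\gamma$ vanishes at $x$) must be invoked. (ii) The ``central Boyer--Zhang computation'' giving the bound on the finite-zero contribution is asserted, not proved, and your justification sketch is not enough: you must also account for zeros at characters of \emph{reducible} representations lying on $X_0$ (a reducible representation killing $s$ need not contradict $\PSL(2,\C)$-abelianness, and such characters can coincide with characters of irreducible binary dihedral representations), and this bookkeeping is the technical heart you defer back to \cite{ni-zhang}. (iii) The ideal-point step conflates poles with zeros: if you compute $\deg f_s$ via poles, the ideal points detecting the slope $r$ contribute the \emph{entire} degree $\tfrac{1}{2}\|s\|_{X_0}=\tfrac{c}{2}\Delta(r,s)$ (this is where the seminorm formula comes from), so their contribution certainly does not vanish; if instead you count zeros, you must rule out or bound zeros of $f_s$ at ideal points where \emph{all} $f_\gamma$ stay bounded (the closed essential surface case), which your dichotomy ignores --- the hypothesis that $s$ is not a boundary slope only handles ideal points at which some $f_\gamma$ blows up. In short, the outline follows the cited proof's architecture, but without the multiplicity lemma, the treatment of reducible/dihedral characters, and the bounded ideal points, the inequality $\|s\|_{X_0}\leq c$ is not established; the honest version of your write-up is a citation of \cite[Theorem~2.5(3)]{ni-zhang} and \cite[\S6]{boyer-zhang-seminorms}, which is what the paper does.
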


Combining these facts with Theorem~\ref{thm:thin-torus-proof} allows us to prove Theorem \ref{thm:psl2-averse}:

\begin{proof}[Proof of Theorem \ref{thm:psl2-averse}]
Suppose that the character variety $X(E_K)$ contains a norm curve.  Then $\partial B_0$ contains only finitely many integral classes, and a theorem of Hatcher \cite{hatcher-slopes} says that $K$ has only finitely many boundary slopes, so according to Proposition~\ref{prop:psl-norm} there can only be finitely many $\PSL(2,\C)$-abelian slopes for $K$.

Now suppose instead that there are $r\neq s$ such that $X(E_K)$ contains an $r$-curve and an $s$-curve, both of which contain an irreducible character.  Then Proposition~\ref{prop:psl2-seminorm} says that if $t$ is a $\PSL(2,\C)$-abelian slope for $K$, then either $t$ is a boundary slope, or $\Delta(r,t) = \Delta(s,t) = 1$.  There are only finitely many of the former, and at most two of the latter, so again $K$ has finitely many $\PSL(2,\C)$-abelian slopes.

In the remaining cases, there is some fixed $r=p/q$ such that every curve $X_0 \subset X(E_K)$ containing an irreducible character must satisfy either
\begin{itemize}
\item $X_0$ is an $r$-curve, or
\item every function $f_\gamma$ is constant on $X_0$.
\end{itemize}
Let $\gamma_0 \subset \partial E_K$ denote the slope corresponding to $r$.  We claim that $f_{\gamma_0}$ must be constant on any irreducible component $C_0 \subset X(E_K)$ that contains the character of an irreducible representation.  Suppose otherwise, for a contradiction. Then  $f_{\gamma_0}$ is not even  locally constant since $C_0$ is connected. It follows that at any smooth point $\chi \in C_0$ corresponding to an irreducible representation, we can intersect $C_0$ with a suitably chosen affine subspace through $\chi$ to get a curve $X_0$ on which $f_{\gamma_0}$ is still not locally constant, and some irreducible component of $X_0$ will not be an $r$-curve, a contradiction.

The character variety $X(E_K)$ only has finitely many irreducible components, so it follows that $f_{\gamma_0}([\rho])$ and hence $\tr(\rho(\mu^p\lambda^q))$ take only finitely many values as $\rho$ ranges over all irreducible representations of $\pi_1(E_K)$.  But this means that the plane curve used to define $\tilde{A}_K(M,L)$ belongs to the zero set of  $f(M^pL^q)$ for some polynomial $f\in \Z[t]$, so $\tilde{A}_K(M,L)$ is itself a polynomial in $M^pL^q$.  This means that $K$ is $p/q$-thin, so Theorem~\ref{thm:thin-torus-proof} now guarantees that $K$ is a torus knot.
\end{proof}

\section{$\SL(2,\C)$-averse knots: the proof of Theorem \ref{thm:SLabeliantorus}}
\label{sec:SL}

In this section, we prove Theorem \ref{thm:SLabeliantorus}. Note that nontrivial torus knots are $\SL(2,\C)$-averse, again because  $(ab+1/n)$-surgery on $T_{a,b}$ is a lens space for every $n\in \Z$ and hence $\SL(2,\C)$-abelian.  It thus  remains to prove the other direction of  Theorem \ref{thm:SLabeliantorus}, stated as:

\begin{theorem} \label{thm:SLabeliantorus-proof}
If $K \subset S^3$ is an $\SL(2,\C)$-averse knot, then $K$ is a torus knot.
\end{theorem}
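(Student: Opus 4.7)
The plan is to follow the proof of Theorem \ref{thm:psl2-averse} almost verbatim, running the Culler--Shalen seminorm dichotomy on the $\SL(2,\C)$ character variety $X(E_K)$. Since $\SU(2)\subset\SL(2,\C)$, any $\SL(2,\C)$-averse knot is $\SU(2)$-averse, so Theorem \ref{thm:sz-averse} and Corollary \ref{cor:SUL} apply: the $\SL(2,\C)$-abelian slopes accumulate at a single rational $r(K)$ with $|r(K)|>2$, and either $K$ or $\mirror{K}$ is an instanton L-space knot. The basic observation driving the argument is that if $S^3_r(K)$ is $\SL(2,\C)$-abelian for $r=p/q$, then no irreducible representation $\rho:\pi_1(E_K)\to\SL(2,\C)$ can satisfy $\rho(\mu^p\lambda^q)=I$, since such a $\rho$ would descend to an irreducible $\SL(2,\C)$-representation of $\pi_1(S^3_r(K))$.

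The first task is to establish $\SL(2,\C)$-analogs of Propositions \ref{prop:psl-norm} and \ref{prop:psl2-seminorm}, stated identically except with ``$\SL(2,\C)$-abelian'' in place of ``$\PSL(2,\C)$-abelian''. These are in the spirit of the original Culler--Shalen theory from \cite{cgls}, which was formulated for $\SL(2,\C)$ in the cyclic surgery setting; extending them to general $\SL(2,\C)$-abelian surgeries is where the genuine technical work will lie.

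Once these tools are available, the case analysis runs exactly as before. If $X(E_K)$ contains a norm curve, the $\SL(2,\C)$-abelian slopes form a finite set, contradicting averseness. If $X(E_K)$ contains two seminorm curves carrying irreducible characters with distinct constant slopes $r\neq s$, then aside from the finitely many boundary slopes, every $\SL(2,\C)$-abelian slope $t$ would satisfy $\Delta(r,t)=\Delta(s,t)=1$, which admits at most finitely many solutions, again contradicting averseness. In the remaining case, every irreducible character curve of $X(E_K)$ is an $r$-curve for a single fixed $r=p/q\in\Q$, and the argument in the final paragraphs of the proof of Theorem \ref{thm:psl2-averse} carries over to show that $\tilde A_K(M,L)$ is a polynomial in $M^pL^q$, whence $K$ is $r$-thin. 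Theorem \ref{thm:thin-torus-proof} then identifies $K$ as a torus knot.

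The main obstacle I anticipate is proving the $\SL(2,\C)$ version of Proposition \ref{prop:psl-norm}. The subtlety is that $\SL(2,\C)$-abelian and $\PSL(2,\C)$-abelian are not directly comparable: a lift to $\SL(2,\C)$ of an abelian-image $\PSL(2,\C)$-representation can fail to be abelian because of a $\pm I$ commutator, while a non-liftable $\PSL(2,\C)$-representation of the surgered manifold has no $\SL(2,\C)$-counterpart that one can rule out. Hence one cannot simply appeal to the $\PSL(2,\C)$ propositions; instead one must redo the Culler--Shalen analysis while tracking how reducible-but-non-abelian $\SL(2,\C)$-representations interact with the ideal points of norm and seminorm curves and with the slope equation $\rho(\mu^p\lambda^q)=I$.
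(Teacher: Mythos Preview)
Your approach is genuinely different from the paper's, and the gap you flag is real and unresolved.

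The paper does \emph{not} run a Culler--Shalen seminorm argument for the $\SL(2,\C)$ case.  Instead it proceeds in parallel to the proof of Theorem~\ref{thm:thin-torus-proof}, avoiding character-variety technology entirely.  Hyperbolic knots are ruled out directly by the 6-theorem (all but finitely many surgeries are hyperbolic, hence carry faithful $\SL(2,\C)$ representations).  For satellites $K=P(C)$, the paper proves two lemmas by hand: a pinching map shows that any $\SL(2,\C)$-abelian slope for $K$ is one for $P(U)$, and an explicit extension argument (constructing an abelian $\rho_V$ on the pattern piece, with a case analysis on whether $\rho(\mu_C)$ is diagonalizable or a parabolic Jordan block) shows that $r/w^2$ is $\SL(2,\C)$-abelian for $C$ whenever $r$ is for $K$.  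A minimal-genus induction then forces $P(U)=T_{a,b}$ and $C=T_{c,d}$ with $ab=w^2cd$, and Proposition~\ref{prop:torus-satellite-non-lspace} gives the contradiction via Corollary~\ref{cor:SUL}.  Theorem~\ref{thm:thin-torus-proof} is never invoked.

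Your proposed route would be elegant if the $\SL(2,\C)$ analogues of Propositions~\ref{prop:psl-norm} and~\ref{prop:psl2-seminorm} were available, but you correctly identify that they are not, and you do not supply them.  The difficulty is exactly what you say: the seminorm $\|\alpha\|_{X_0}$ counts all zeros of $f_\alpha=\tr(\alpha)^2-4$, including those at irreducible characters where $\rho(\mu^p\lambda^q)$ is merely parabolic rather than $\pm I$.  Knowing that $S^3_{p/q}(K)$ is $\SL(2,\C)$-abelian only rules out $\rho(\mu^p\lambda^q)=I$; it says nothing about parabolic values or about $-I$, so the usual bound on ordinary-point zeros does not follow.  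The Boyer--Zhang machinery in \cite{boyer-zhang-seminorms} is tailored to the $\PSL(2,\C)$ setting precisely because $f_\alpha=0$ there corresponds to $\bar\rho(\alpha)=I$ in $\PSL(2,\C)$, and there is no evident way to recover that correspondence for the $\SL(2,\C)$-abelian hypothesis.  Absent a new idea here, the proposal is a plan with a hole rather than a proof.
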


We first observe the following:

\begin{lemma} \label{lem:sl2-averse-hyperbolic}
Hyperbolic knots are never $\SL(2,\C)$-averse.
\end{lemma}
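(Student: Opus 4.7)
The plan is to show that a hyperbolic knot $K$ admits only finitely many $\SL(2,\C)$-abelian surgery slopes, which already contradicts being $\SL(2,\C)$-averse.

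First I would exhibit a norm curve in the $\SL(2,\C)$ character variety $X(E_K)$. The complete hyperbolic structure on $E_K$ supplies a discrete faithful representation $\bar\rho_0:\pi_1(E_K)\to\PSL(2,\C)$; a short exact sequence computation (using that $H_1(\partial E_K;\Z/2)\to H_1(E_K;\Z/2)$ is surjective) shows $H^2(E_K;\Z/2)=0$, so the obstruction to lifting $\bar\rho_0$ to $\SL(2,\C)$ vanishes and we obtain an irreducible $\rho_0:\pi_1(E_K)\to\SL(2,\C)$. Its character lies on an irreducible curve component $X_0\subset X(E_K)$, which is a norm curve by standard Thurston--Culler--Shalen deformation theory.

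Second I would invoke the $\SL(2,\C)$ analogue of Proposition~\ref{prop:psl-norm} for this norm curve: if $S^3_r(K)$ is $\SL(2,\C)$-abelian for some $r=p/q$, then either $r$ is a boundary slope for $K$ or $(p,q)$ lies on $\partial B_0$ and is not a vertex. Hatcher's theorem bounds the number of boundary slopes, and $\partial B_0$ contains finitely many integer lattice points since $B_0$ is a bounded convex polygon. Thus the set of $\SL(2,\C)$-abelian slopes is finite and $K$ cannot be $\SL(2,\C)$-averse.

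The main obstacle is to secure the norm-curve dichotomy at the $\SL(2,\C)$ level, since only its $\PSL(2,\C)$ refinement has been stated so far as Proposition~\ref{prop:psl-norm}. One clean route is to note that an $\SL(2,\C)$-abelian rational homology sphere $Y$ with $|H_1(Y)|$ odd is automatically $\PSL(2,\C)$-abelian, because the obstruction to lifting any $\PSL(2,\C)$ representation of $\pi_1(Y)$ lives in $H^2(Y;\Z/2)$ and vanishes in this case. By Theorem~\ref{thm:sz-averse}, the $\SL(2,\C)$-abelian slopes accumulate at a single rational $r(K)$, and an elementary parity or continued-fraction argument on the approximants to $r(K)$ produces infinitely many such $p/q$ with $p$ odd. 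These yield $\PSL(2,\C)$-abelian fillings to which Proposition~\ref{prop:psl-norm} applies directly, giving the desired contradiction.
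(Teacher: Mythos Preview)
Your approach via Culler--Shalen norm curves is quite different from the paper's, which is essentially a two-line argument: by the 6-theorem of Agol and Lackenby, all but finitely many Dehn fillings of a hyperbolic knot complement are closed hyperbolic $3$-manifolds, and any closed hyperbolic $3$-manifold carries a faithful representation $\pi_1 \to \SL(2,\C)$ with nonabelian image (the holonomy lifts by \cite[Proposition~3.1.1]{cs-splittings}). Hence only the finitely many non-hyperbolic fillings can be $\SL(2,\C)$-abelian.

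Your first two paragraphs are fine, but the workaround in the third has a genuine gap. You assert that among the infinitely many $\SL(2,\C)$-abelian slopes $p_i/q_i$ accumulating at $r(K)$, infinitely many have $p_i$ odd, so that the corresponding fillings are $\PSL(2,\C)$-abelian and Proposition~\ref{prop:psl-norm} applies. No ``elementary parity or continued-fraction argument'' secures this: for instance, the sequence $\tfrac{2n}{2n-1} \to 1$ consists entirely of reduced fractions with even numerator, and nothing in Theorem~\ref{thm:sz-averse} constrains the parities of the $p_i$. Without this step you cannot pass to $\PSL(2,\C)$, and indeed the paper treats the $\SL(2,\C)$ case separately from Theorem~\ref{thm:psl2-averse} precisely because the implication ``$\SL(2,\C)$-averse $\Rightarrow$ $\PSL(2,\C)$-averse'' is not available. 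A cleaner repair would be to invoke the norm-curve dichotomy directly at the $\SL(2,\C)$ level (this is the original setting of \cite{cgls}, using that an $\SL(2,\C)$-abelian filling admits no irreducible $\SL(2,\C)$ representation); but once you compare lengths, the 6-theorem proof is clearly the more efficient tool for this lemma.
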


\begin{proof}
If $K$ is hyperbolic, then Thurston's hyperbolic Dehn surgery theorem \cite[Theorem~5.8.2]{thurston-notes} says that all but finitely many Dehn surgeries on $K$ are closed hyperbolic manifolds.  If $S^3_r(K)$ is hyperbolic, then there is a faithful representation \[\pi_1(S^3_r(K)) \to \SL(2,\C)\] \cite[Proposition~3.1.1]{cs-splittings}, which has nonabelian image.  Therefore, only the finitely many non-hyperbolic surgeries can possibly be $\SL(2,\C)$-abelian.
\end{proof}

Therefore, to prove Theorem \ref{thm:SLabeliantorus}, we just need to show that there are no $\SL(2,\C)$-averse satellite knots. This is the aim of the rest of the section, and we proceed in a manner that is similar in spirit to our proof that there are no thin satellite knots. 

\begin{lemma} \label{lem:sl2-abelian-pinch}
Let $r\in\Q$.  If $r$-surgery on a satellite knot $K = P(C)$ is $\SL(2,\C)$-abelian, then so is $r$-surgery on $P(U)$.
\end{lemma}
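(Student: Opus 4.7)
The plan is to construct a surjective homomorphism $\bar{\Phi}: \pi_1(S^3_r(K)) \twoheadrightarrow \pi_1(S^3_r(P(U)))$. From this the lemma follows immediately: for any representation $\rho: \pi_1(S^3_r(P(U))) \to \SL(2,\C)$, the pullback $\rho \circ \bar{\Phi}$ is a representation of $\pi_1(S^3_r(K))$ with the same image as $\rho$; if every such pullback is abelian, then so is every $\rho$.

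To build $\Phi$ at the level of knot exteriors, I would use the satellite decompositions $E_K = E_{P\subset V} \cup_T E_C$ and $E_{P(U)} = E_{P\subset V} \cup_T V'$, where $V$ is the solid torus containing $P$, $T = \partial V$ is the companion torus, and $V'$ is the exterior of the unknot. The crucial observation is that the inclusion $\pi_1(T) \hookrightarrow \pi_1(V') \cong \Z$ sends the meridian generator to $1$ and the longitude generator to $0$, and this agrees with the composition $\pi_1(T) \hookrightarrow \pi_1(E_C) \twoheadrightarrow H_1(E_C) = \Z$, since $\mu_C$ generates $H_1(E_C)$ while $\lambda_C$ is nullhomologous. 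Van Kampen's theorem then produces a homomorphism $\Phi: \pi_1(E_K) \to \pi_1(E_{P(U)})$ that is the identity on $\pi_1(E_{P\subset V})$ and factors through the abelianization on $\pi_1(E_C)$, and since both pieces together generate $\pi_1(E_{P(U)})$, the map $\Phi$ is surjective.

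It remains to check that $\Phi$ descends to the Dehn-filled manifolds, which requires $\Phi(\mu_K^p \lambda_K^q) = \mu_{P(U)}^p \lambda_{P(U)}^q$ for $r = p/q$. Both $\mu_K$ and $\lambda_K$ lie on $\partial \nu(P) \subset E_{P\subset V}$, where $\Phi$ acts as the identity, so the question reduces to whether they coincide with $\mu_{P(U)}$ and $\lambda_{P(U)}$ as curves on this common torus. The meridian is unambiguous (it bounds a disk in $\nu(P)$ in either case), but the Seifert longitude is the subtle point---and the main obstacle in the argument---since the Seifert framings of $K$ and $P(U)$ might a priori differ. However, $\Phi_\ast: H_1(E_K) \to H_1(E_{P(U)})$ is the identity on $\Z$ (both classes being generated by the common meridian on $\partial \nu(P)$), so the class $[\lambda_K] = 0 \in H_1(E_K)$ maps to $0 \in H_1(E_{P(U)})$; since a primitive nullhomologous curve on $\partial \nu(P(U))$ is uniquely determined up to sign, we conclude $\lambda_K = \lambda_{P(U)}$, and $\Phi$ descends to the desired surjection $\bar{\Phi}$.
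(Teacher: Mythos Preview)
Your proof is correct and follows essentially the same approach as the paper's: both construct a surjection $\pi_1(S^3_r(K)) \twoheadrightarrow \pi_1(S^3_r(P(U)))$ and then pull back representations. The paper phrases this topologically, via a degree-$1$ ``pinch'' map $E_C \to E_U$ glued to the identity on the pattern exterior, whereas you construct the same homomorphism algebraically via van Kampen and the abelianization $\pi_1(E_C)\to H_1(E_C)\cong\pi_1(V')$; your extra care in verifying that the Seifert longitudes coincide is a point the paper simply asserts.
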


\begin{proof}
Our proof is essentially the same as Sivek and Zentner's proof of \cite[Proposition~10.2]{sz-pillowcase}, which is the analogue of this lemma with $\SU(2)$ in place of $\SL(2,\C)$.  In particular, we use the standard fact that there is a degree-1 map pinching the exterior $E_C = S^3 \setminus \nu(C)$ onto the unknot exterior $E_U = S^1\times D^2$, that restricts to a homeomorphism
\begin{align*}
\partial E_C &\to \partial E_U \\
\mu_C,\lambda_C &\mapsto \mu_U,\lambda_U.
\end{align*}
We can glue the pattern exterior to each of these, and extend this pinching map across it as the identity map, to get a degree-1 map
\[ E_K = E_{P(C)} \to E_{P(U)} \]
sending meridians to meridians and longitudes to longitudes.  The induced map on $\pi_1$ is surjective and takes $\mu_K^p \lambda_K^q$ to $\mu_{P(U)}^p \lambda_{P(U)}^q$ where $r=p/q$, so it descends to a surjection
\[ \pi_1(S^3_r(K)) \to \pi_1(S^3_r(P(U))). \]
Any nonabelian representation
\[ \pi_1(S^3_r(P(U))) \to \SL(2,\C) \]
would therefore induce a  nonabelian representation of $\pi_1(S^3_r(K))$, and since the latter does not exist it follows that $r$-surgery on $P(U)$ must be $\SL(2,\C)$-abelian as well.
\end{proof}

\begin{lemma} \label{lem:sl2-abelian-companion}
Let $K = P(C)$ be a satellite knot whose pattern $P$ has winding number $w \geq 1$, and let $r\in\Q$.  If $r$-surgery on $C$ is not $\SL(2,\C)$-abelian, then neither is $rw^2$-surgery on $K$.
\end{lemma}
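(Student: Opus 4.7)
The plan is to extend a nonabelian $\SL(2,\C)$-representation $\rho_C$ of $\pi_1(S^3_r(C))$ to a nonabelian representation of $\pi_1(S^3_{rw^2}(K))$, using the amalgamation of $\pi_1(E_K)$ across the satellite torus. Setting $V = \nu(C)$ and $E_P = V \setminus \nu(K)$, we have $E_K = E_P \cup_{\partial V} E_C$, and van Kampen gives
\[\pi_1(E_K) \cong \pi_1(E_P) *_{\pi_1(\partial V)} \pi_1(E_C).\]

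Lift $\rho_C$ to $\tilde\rho_C\colon \pi_1(E_C) \to \SL(2,\C)$ satisfying $\tilde\rho_C(\mu_C^p\lambda_C^q) = I$ for $r=p/q$, and set $A = \tilde\rho_C(\mu_C)$, $B = \tilde\rho_C(\lambda_C)$; these commute. The key step would be to define $\tilde\rho_P\colon \pi_1(E_P) \to \SL(2,\C)$ as the abelian representation factoring through $H_1(E_P) = \Z\langle\mu_P\rangle \oplus \Z\langle\lambda_V\rangle$ and sending $\mu_P \mapsto A'$, $\lambda_V \mapsto B$, where $A' \in \SL(2,\C)$ is a $w$-th root of $A$ commuting with $B$. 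Since a meridian disk of $V$ meets $P$ algebraically $w$ times, $[\mu_V] = w[\mu_P]$ in $H_1(E_P)$, so $\tilde\rho_P(\mu_V) = (A')^w = A$ and $\tilde\rho_P(\lambda_V) = B$, matching $\tilde\rho_C$ on $\pi_1(\partial V)$; the two then assemble via the amalgamation into a representation $\tilde\rho_K\colon \pi_1(E_K) \to \SL(2,\C)$.

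To check that $\tilde\rho_K$ descends to $\pi_1(S^3_{rw^2}(K))$, I would use the standard fact that in the zero-framed satellite identification $\lambda_K = \lambda_P$ on $\partial\nu(K)$, and that $[\lambda_P] = w[\lambda_V]$ in $H_1(E_P)$ (checked via linking numbers with the companion's core and with $P$ in a standard embedding of $V$). Then
\[\tilde\rho_K(\mu_K^{pw^2}\lambda_K^q) = (A')^{pw^2} B^{wq} = A^{pw} B^{wq} = (A^p B^q)^w = I,\]
using that $A$ and $B$ commute and that $A^p B^q = I$. Nonabelianness of $\tilde\rho_K$ is automatic because its restriction to $\pi_1(E_C)$ is the nonabelian $\tilde\rho_C$.

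The hard part will be producing the commuting $w$-th root $A'$. When $A \neq \pm I$ is semisimple, the centralizer of $A$ in $\SL(2,\C)$ is a maximal torus containing $B$ and admitting $w$-th roots of $A$, so we take $A'$ there. The genuinely problematic case is when $A$ is a noncentral parabolic element with eigenvalue $-1$ and $w$ is even, in which case no $w$-th root of $A$ exists in $\SL(2,\C)$; this I would sidestep by choosing $\rho_C$ generically so as to avoid the proper subvariety $\{\tr\rho(\mu_C) = -2\}$ of the character variety of $S^3_r(C)$, noting that if an entire nonabelian component were contained in this subvariety then a separate argument (perhaps working in $\PSL(2,\C)$ first and lifting) would be needed.
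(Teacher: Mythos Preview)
Your overall architecture matches the paper's: extend a nonabelian $\rho$ on $\pi_1(E_C)$ across the pattern piece by an abelian representation, gluing along the satellite torus. But there are two genuine gaps.

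\textbf{The surgery curve is not $\mu_K^{pw^2}\lambda_K^q$.} Writing $r=p/q$ in lowest terms, the slope $rw^2 = pw^2/q$ is generally \emph{not} in lowest terms: its reduced form is $(pw^2/d)/(q/d)$ with $d=\gcd(w^2,q)$. Your computation only shows $\tilde\rho_K(\mu_K^{pw^2}\lambda_K^q)=I$, i.e.\ that the image of the actual surgery curve has order dividing $d$. An arbitrary $w$-th root $A'$ can fail: for instance with $p=1$, $q=4$, $w=6$ (so $d=4$ and the surgery curve is $\mu_K^9\lambda_K$), take $A=\mathrm{diag}(\alpha,\alpha^{-1})$, $B=\mathrm{diag}(\beta,\beta^{-1})$ with $\alpha\beta^4=1$; half of the sixth roots $A'$ of $A$ give $(A')^9B^6=-I$, so the representation does not descend. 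The paper's proof handles this in its diagonal case by choosing the specific $w$-th root $\eta = s^{1/w}\exp((\theta+2\pi k)i/w)$ with $k$ selected so that a certain congruence mod $d$ holds, forcing $\rho_V(\mu_P^{pw^2/d}\lambda_P^{q/d})=I$. You need an analogous careful choice.

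\textbf{The parabolic, eigenvalue $-1$, $w$ even case.} Your proposed fix---perturb $\rho_C$ off the subvariety $\{\tr\rho(\mu_C)=-2\}$---is not an argument: nothing prevents the entire nonabelian locus of $X(S^3_r(C))$ from lying in that subvariety (it could even be zero-dimensional). The paper's solution is clean and complete: replace $\rho$ by $\chi\cdot\rho$, where $\chi:\pi_1(E_C)\to\{\pm1\}$ is the central character sending $\mu_C\mapsto -1$ and $\lambda_C\mapsto 1$. This flips the eigenvalues of $\rho(\mu_C)$ to $+1$, after which a $w$-th root exists in the unipotent subgroup, and since $w$ is even one checks directly that the modified extension still kills the reduced surgery curve.
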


The analogue of this lemma with $\SU(2)$ in place of $\SL(2,\C)$ was first proved by Sivek and Zentner in \cite[Proposition~10.3]{sz-pillowcase}. Our proof follows the same ideas, but is considerably longer: the  proof in \cite{sz-pillowcase} begins by assuming that a given representation of $\pi_1(E_C)$ into $\SU(2)$ is diagonal on the peripheral subgroup (corresponding to Case 1 in our proof below), but here we must consider two additional cases in which the peripheral values are parabolic elements of $\SL(2,\C)$.

\begin{proof}[Proof of Lemma \ref{lem:sl2-abelian-companion}]
Write $r = p/q$, with $p$ and $q$ relatively prime and $q > 0$.  Let $d=\gcd(q,w^2)$, and note that
\[ rw^2 = \frac{pw^2/d}{q/d} \]
in lowest terms, since $\gcd(pw^2/d, q/d) = 1$.  Let $E_C = S^3 \setminus \nu(C)$ denote the exterior of $C$, and fix a nonabelian representation
\begin{align*}
\rho: \pi_1(E_C) &\to \SL(2,\C) \\
\mu_C^p \lambda_C^q &\mapsto 1,
\end{align*}
which exists by hypothesis.  We will assume, by replacing $\rho$ with a conjugate if necessary, that $\rho(\mu_C)$ is in Jordan normal form.  This means that $\rho(\mu_C)$ is a diagonal matrix unless its eigenvalues are equal, in which case these equal eigenvalues are  both $+1$ or both $-1$  since $\det(\rho(\mu_C)) = 1$.  We also note that $\rho(\mu_C)$ cannot be either of the central elements $\pm I$, since $\rho(\mu_C)$  normally generates the nonabelian image of $\rho$.

Letting $V = (S^1\times D^2) \setminus \nu(P)$, we may decompose the exterior $E_K = S^3\setminus \nu(K)$ as
\[ E_K \cong E_C \cup_T V. \]
Our goal will be to extend $\rho$ across $\pi_1(V)$, by constructing an \emph{abelian} representation
\[ \rho_V: \pi_1(V) \to \SL(2,\C) \]
such that
\begin{align*}
\rho_V(\mu_C) = \rho_V( [\{\pt\} \times \partial D^2] ) &= \rho(\mu_C) \\
\rho_V(\lambda_C) = \rho_V( [S^1 \times \{\pt\}] ) &= \rho(\lambda_C) \\
\rho_V(\mu_P^{pw^2/d} \lambda_P^{q/d}) &= 1.
\end{align*}
Then $\rho$ and $\rho_V$ will glue together to give a representation \[\pi_1(E_K) \to \SL(2,\C)\] that descends to a nonabelian representation of $\pi_1(S^3_{rw^2}(K))$, as desired.

Observe that any abelian representation $\rho_V: \pi_1(V) \to \SL(2,\C)$ must factor through 
\[ H_1(V;\Z) \cong \Z^2, \]
which is generated by the classes $[\mu_P]$ and $[\lambda_C]$ subject to the relations
\[
[\mu_C] = w [\mu_P]\, \textrm{ and }\,
[\lambda_P] = w[\lambda_C].
\]
The desired $\rho_V$ is therefore determined by the elements $\rho_V(\mu_P)$ and $\rho_V(\lambda_C)$, and the value $\rho_V(\lambda_C) = \rho(\lambda_C)$ is already fixed by $\rho$ -- we note that this implies that $\rho_V(\lambda_P) = \rho(\lambda_C)^w$ -- so we wish to find $\rho_V(\mu_P) \in \SL(2,\C)$ satisfying
\begin{align*}
\rho_V(\mu_P)^w &= \rho(\mu_C) \\
\rho_V(\mu_P^{pw^2/d} \lambda_P^{q/d}) &= 1.
\end{align*}
We consider several cases separately, recalling that $\rho(\mu_C)$ is in Jordan normal form.

\vspace{1em}\noindent
\underline{Case 1}: $\rho(\mu_C) \in \SL(2,\C)$ is a diagonal matrix $\left(\begin{smallmatrix} \alpha & 0 \\ 0 & \alpha^{-1} \end{smallmatrix}\right)$, where $\alpha \neq \pm1$.\\

In this case, $\rho(\mu_C)$ is diagonal but not central, and $\rho(\lambda_C)$ commutes with it, so $\rho(\lambda_C)$ must be diagonal as well: we can write
\begin{align*}
\rho(\mu_C) &= \begin{pmatrix} \alpha & 0 \\ 0 & \alpha^{-1} \end{pmatrix}, &
\rho(\lambda_C) &= \begin{pmatrix} \beta & 0 \\ 0 & \beta^{-1} \end{pmatrix}
\end{align*}
where $\alpha^p \beta^q = 1$.  We let $\alpha = s e^{i\theta}$ and $\beta = t e^{i\phi}$, where $s$ and $t$ are positive real numbers and $\theta,\phi\in\R$, and then the condition $\alpha^p\beta^q=1$ means that
\[
s^pt^q = 1 \,\textrm{ and }\,
p\theta + q\phi = 2\pi m
\]
for some integer $m$.  Let us write
\[ \eta = s^{1/w} \exp\left( \frac{(\theta+2\pi k)i}{w} \right) \]
for some integer $k$, chosen so that 
\[ m+pk \equiv 0 \pmod{d}; \]
we note that this is possible since $d=\gcd(q,w^2)$ divides $q$ and is thus coprime to $p$, meaning that $p$ is invertible $\!\!\!\pmod{d}$.   Then $\eta^w = \alpha$, so we set
\begin{align*}
\rho_V(\mu_P) &= \begin{pmatrix} \eta & 0 \\ 0 & \eta^{-1} \end{pmatrix}, &
\rho_V(\lambda_P) &= \begin{pmatrix} \beta^w & 0 \\ 0 & \beta^{-w} \end{pmatrix}
\end{align*}
so that $\rho_V(\mu_P)^w = \rho(\mu_C)$ and $\rho_V(\lambda_P) = \rho(\lambda_C)^w$.

Having done this, we now compute that
\[ \rho_V(\mu_P^{pw^2/d} \lambda_P^{q/d}) = \begin{pmatrix} z & 0 \\ 0 & z^{-1} \end{pmatrix}, \]
where $z = \eta^{pw^2/d} \cdot (\beta^w)^{q/d}$ satisfies
\begin{align*}
z &= s^{pw/d} t^{qw/d} \cdot \exp\left( \frac{pw(\theta+2\pi k)i}{d} + \frac{\phi i \cdot qw}{d}\right) \\
&= (s^pt^q)^{w/d} \cdot \exp\left( \frac{\big((p\theta+q\phi) + 2\pi pk\big)wi }{d} \right) \\
&= \exp\left( 2\pi \left(\frac{m+pk}{d}\right) \cdot wi \right)
\end{align*}
since $s^pt^q = 1$ and $p\theta + q\phi = 2\pi m$.  But we chose $k$ so that $(m+pk)/d$ would be an integer, so $z=1$ and thus $\rho_V(\mu_P^{pw^2/d}\lambda_P^{q/d}) = 1$, meaning that $\rho_V$ is the desired representation.

\vspace{1em}\noindent
\underline{Case 2}: $\rho(\mu_C)$ is a $2\times 2$ Jordan block with  eigenvalues which are either both $1$ or both $-1$, where  in the latter case we require that $w$ is odd.\\

Since $\rho(\mu_C) \neq \pm1$ commutes with $\rho(\lambda_C)$,  the latter is also a Jordan block. So, we have
\begin{align*}
\rho(\mu_C) &= \epsilon \begin{pmatrix} 1 & a \\ 0 & 1 \end{pmatrix}, &
\rho(\lambda_C) &= \eta \begin{pmatrix} 1 & b \\ 0 & 1 \end{pmatrix}
\end{align*}
for some $a,b\in\C$ with $a\neq 0$ and some $\epsilon,\eta=\pm1$ with $\epsilon^w = \epsilon$.  Then $\rho(\mu_C^p\lambda_C^q)=1$ is equivalent to $\epsilon^p\eta^q=1$ and $ap+bq=0$.  We define $\rho_V$ by
\begin{align*}
\rho_V(\mu_P) &= \epsilon \begin{pmatrix} 1 & a/w \\ 0 & 1 \end{pmatrix}, &
\rho_V(\lambda_P) &= \eta^w \begin{pmatrix} 1 & bw \\ 0 & 1 \end{pmatrix},
\end{align*}
which satisfies \[\rho_V(\mu_P)^w = \rho(\mu_C) \, \textrm{ and }\,\rho_V(\lambda_P) = \rho(\lambda_C)^w,\] and we compute that
\begin{align*}
\rho_V(\mu_P^{pw^2/d}\lambda_P^{q/d}) &= \epsilon^{pw^2/d} \begin{pmatrix} 1 & apw/d \\ 0 & 1 \end{pmatrix} \cdot \eta^{w(q/d)} \begin{pmatrix} 1 & bqw/d \\ 0 & 1 \end{pmatrix} \\
&= \epsilon^{pw^2/d}\eta^{w(q/d)} \begin{pmatrix} 1 & (ap+bq)w/d \\ 0 & 1 \end{pmatrix} \\
&= \epsilon^{pw^2/d}\eta^{w(q/d)} \begin{pmatrix} 1 & 0 \\ 0 & 1 \end{pmatrix}
\end{align*}
since $ap+bq=0$.  If $d$ is odd, then the leading coefficient on the right satisfies
\[ \epsilon^{pw^2/d}\eta^{w(q/d)} = \left(\epsilon^{pw^2/d}\eta^{w(q/d)}\right)^d = (\epsilon^w)^{pw} \eta^{qw} = \epsilon^{pw}\eta^{qw} = (\epsilon^p\eta^q)^w = 1. \]
Otherwise $d=\gcd(q,w^2)$ is even, hence both $q$ and $w$ must be even; but then $\eta^q = \eta^w=1$, and $\epsilon^p\eta^q=1$ implies that $\epsilon^p=1$, so that
\[ \epsilon^{pw^2/d}\eta^{w(q/d)} = (\epsilon^p)^{w^2/d} (\eta^w)^{q/d} = 1. \]
In either case we have $\rho_V(\mu_P^{pw^2/d}\lambda_P^{q/d}) = 1$, so $\rho_V$ is the desired representation.

\vspace{1em}\noindent
\underline{Case 3}: $\rho(\mu_C)$ is a $2\times 2$ Jordan block with eigenvalues $-1$, and $w$ is even.\\

As in the previous case, we can write
\begin{align*}
\rho(\mu_C) &= -\begin{pmatrix} 1 & a \\ 0 & 1 \end{pmatrix}, &
\rho(\lambda_C) &= \eta \begin{pmatrix} 1 & b \\ 0 & 1 \end{pmatrix}
\end{align*}
for some $a,b\in\C$ with $a\neq 0$ and some $\eta=\pm1$, and then $\rho(\mu_C^p\lambda_C^q)=1$ is equivalent to $(-1)^p\eta^q=1$ and $ap+bq=0$.  The problem is that now the matrix $\rho(\mu_C)$ does not have a $w$th root.  We fix this by considering the nontrivial central character
\[ \chi: \pi_1(E_C) \twoheadrightarrow H_1(E_C) \to \{\pm1\} \]
that sends $\mu_C$ to $-1$ and $\lambda_C$ to $+1$, and setting $\rho' = \chi\cdot\rho$, so that
\begin{align*}
\rho'(\mu_C) &= \begin{pmatrix} 1 & a \\ 0 & 1 \end{pmatrix}, &
\rho'(\lambda_C) &= \eta \begin{pmatrix} 1 & b \\ 0 & 1 \end{pmatrix}.
\end{align*}
We can extend $\rho'$, rather than the original $\rho$, across $\pi_1(V)$ by setting
\begin{align*}
\rho_V(\mu_P) &= \begin{pmatrix} 1 & a/w \\ 0 & 1 \end{pmatrix}, &
\rho_V(\lambda_P) &= \begin{pmatrix} 1 & bw \\ 0 & 1 \end{pmatrix},
\end{align*}
and we have $\rho_V(\mu_P)^w = \rho'(\mu_C)$ and $\rho_V(\lambda_P) = \rho'(\lambda_C)^w$ since $w$ is even.  Now 
\[ \rho_V(\mu_P^{pw^2/d}\lambda_P^{q/d}) = \begin{pmatrix} 1 & (ap+bq)w/d \\ 0 & 1 \end{pmatrix} = \begin{pmatrix} 1 & 0 \\ 0 & 1 \end{pmatrix}, \]
so $\rho_V$ is the desired extension of $\rho'$, and it has nonabelian image since $\rho'$ does.

\vspace{1em}These three cases cover all possible values of $\rho(\mu_C)$, so we conclude that $rw^2$-surgery on $K$ cannot be $\SL(2,\C)$-abelian after all.
\end{proof}

In preparation for the next result, we note that since $\SU(2)$ is a subgroup of $\SL(2,\C)$, an $\SL(2,\C)$-abelian 3-manifold is also $\SU(2)$-abelian, and hence an $\SL(2,\C)$-averse knot is also $\SU(2)$-averse. In particular, if $K$ is $\SL(2,\C)$-averse, then the set of slopes $r$ for which $S^3_r(K)$ is $\SL(2,\C)$-abelian has a unique accumulation point, which agrees with the limit slope $r(K)$ for $\SU(2)$-abelian surgeries of Theorem~\ref{thm:sz-averse}.

The combination of Lemmas \ref{lem:sl2-abelian-pinch} and \ref{lem:sl2-abelian-companion} then leads to the following, which is an analogue for $\SL(2,\C)$-averse satellites of Proposition \ref{prop:thin-satellite} on thin satellites:

\begin{corollary} \label{cor:sl2-averse-satellite}
Suppose  $K = P(C)$ is an $\SL(2,\C)$-averse satellite knot, and let $w$ be the winding number of the pattern $P$.  Then:
\begin{itemize}
\item $P(U)$ is nontrivial and $w \geq 1$,
\item $P(U)$ is $\SL(2,\C)$-averse, with $r(P(U)) = r(K)$,
\item the companion $C$ is $\SL(2,\C)$-averse, with $r(K) = w^2 r(C)$.
\end{itemize}
\end{corollary}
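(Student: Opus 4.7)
The plan is to piece together the three claims from Lemmas~\ref{lem:sl2-abelian-pinch} and~\ref{lem:sl2-abelian-companion} together with the instanton L-space machinery from Section~\ref{sec:prelim}; there is no fundamentally new input required, only careful bookkeeping with accumulation points.

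For the first bullet, my approach would be to note that every $\SL(2,\C)$-abelian manifold is $\SU(2)$-abelian, so $K$ is $\SU(2)$-averse, and Corollary~\ref{cor:SUL} then guarantees that either $K$ or $\mirror{K}$ is an instanton L-space knot. Writing $\mirror{K} = \mirror{P}(\mirror{C})$, I would observe that $\mirror{P}$ has the same winding number $w$ as $P$ and that $\mirror{P}(U) = \mirror{P(U)}$, so the triviality of the underlying pattern knot is invariant under mirroring. Lemma~\ref{lem:satellite-lspace} applied to whichever of $K$ or $\mirror{K}$ is an instanton L-space knot would then yield $w \geq 1$ and the nontriviality of $P(U)$.

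For the second bullet, I would fix an infinite sequence $\{r_n\}$ of $\SL(2,\C)$-abelian slopes for $K$. By Lemma~\ref{lem:sl2-abelian-pinch}, each $r_n$-surgery on $P(U)$ is $\SL(2,\C)$-abelian, and since $P(U)$ is nontrivial by the previous step, this would show that $P(U)$ is $\SL(2,\C)$-averse. As noted just before the statement of the corollary, the $\SL(2,\C)$-abelian slope set of an $\SL(2,\C)$-averse knot has a unique accumulation point, equal to the limit slope furnished by Theorem~\ref{thm:sz-averse}. Since $\{r_n\}$ accumulates at $r(K)$ and is contained in the $\SL(2,\C)$-abelian slope set of $P(U)$, this would force $r(P(U)) = r(K)$.

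For the third bullet, I would invoke the contrapositive of Lemma~\ref{lem:sl2-abelian-companion}: if $r_n$-surgery on $K$ is $\SL(2,\C)$-abelian, then $(r_n/w^2)$-surgery on $C$ is $\SL(2,\C)$-abelian. Since $C$ is nontrivial by the standing convention in Section~\ref{sec:prelim} and since $\{r_n/w^2\}$ is infinite, this exhibits $C$ as $\SL(2,\C)$-averse; the unique accumulation point of $\{r_n/w^2\}$ is $r(K)/w^2$, whence $r(C) = r(K)/w^2$. The only genuinely technical step is Lemma~\ref{lem:sl2-abelian-companion} itself, whose proof depends on the three-case analysis of the Jordan form of $\rho(\mu_C)$; once that and the pinching lemma are in hand, the corollary reduces to tracking how the accumulation point transforms under the maps $r_n \mapsto r_n$ (to $P(U)$) and $r_n \mapsto r_n/w^2$ (to $C$).
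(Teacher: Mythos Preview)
Your proposal is correct and follows essentially the same route as the paper's proof: reduce to $\SU(2)$-aversion to invoke Corollary~\ref{cor:SUL} and Lemma~\ref{lem:satellite-lspace} for the first bullet, then push the infinite family of $\SL(2,\C)$-abelian slopes through Lemmas~\ref{lem:sl2-abelian-pinch} and~\ref{lem:sl2-abelian-companion} (via its contrapositive) to handle $P(U)$ and $C$, tracking accumulation points. Your explicit remark that mirroring preserves the winding number and the triviality of $P(U)$ is a small elaboration the paper leaves implicit, but otherwise the arguments coincide.
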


\begin{proof}
By definition, $K = P(C)$ has infinitely many $\SL(2,\C)$-abelian slopes \[r_1,r_2,r_3,\dots \in \Q,\] which are also $\SU(2)$-abelian slopes for $K$ and have a unique accumulation point $r(K) \in \Q$, as discussed above. Since $K$ is also $\SU(2)$-averse, Corollary \ref{cor:SUL} says that either $K$ or $\mirror{K}$ is an instanton L-space knot. Then $P(U)$ is nontrivial and $w\geq 1$ by Lemma \ref{lem:satellite-lspace}.

Lemma~\ref{lem:sl2-abelian-pinch} says that each $r_i$ is also an $\SL(2,\C)$-abelian slope for $P(U)$, so $P(U)$ is also $\SL(2,\C)$-averse with limit slope $r(P(U)) = r(K)$.

Similarly, Lemma~\ref{lem:sl2-abelian-companion} says us that each of the slopes $r_i/w^2$ is also an $\SL(2,\C)$-abelian slope for $C$, so $C$ is also $\SL(2,\C)$-averse with limit slope $r(K)/w^2$.
\end{proof}

We now prove Theorem \ref{thm:SLabeliantorus-proof}, which completes the proof of Theorem \ref{thm:SLabeliantorus}:

\begin{proof}[Proof of Theorem~\ref{thm:SLabeliantorus-proof}]
Note that $K$ is not hyperbolic, by Lemma~\ref{lem:sl2-averse-hyperbolic}. If $K$ is a torus knot then we are done, so it suffices to prove that there are no $\SL(2,\C)$-averse satellite knots. 

Suppose for a contradiction that $K=P(C)$ is an $\SL(2,\C)$-averse satellite knot with the smallest genus among all $\SL(2,\C)$-averse satellites. Corollary~\ref{cor:sl2-averse-satellite} says that $P(U)$ and $C$ are  nontrivial and $\SL(2,\C)$-averse, and that if $P$ has winding number $w$, then $w\geq 1$ and 
\[ r(P(U)) = r(K) = w^2 r(C). \]
Then from the relation
\[ g(K) = g(P(U)) + w\cdot g(C), \]
we see that $P(U)$ and $C$ have  genera strictly less than that of $K$, and are thus  torus knots. 

Now we repeat the proof of Theorem~\ref{thm:thin-torus-proof}: if we write $P(U) = T_{a,b}$ and $C = T_{c,d}$, then the above relation between their limit slopes becomes
\[ ab = r(K) = cd w^2, \]
so $w^2$ divides $ab$.  Proposition \ref{prop:torus-satellite-non-lspace} then implies that neither $K$ nor $\mirror{K}$ is an instanton L-space knot. But this yields a contradiction: since $K$ is $\SL(2,\C)$-averse, it is also $\SU(2)$-averse, which implies by Corollary \ref{cor:SUL} that either $K$ or  $\mirror{K}$ is an instanton L-space knot. 
\end{proof}

%\section{Conflict of interest statement}
%On behalf of all authors, the corresponding author states that there is no conflict of interest.
%
%\section{Data availability statement}
%This manuscript has no associated data.

\bibliographystyle{myalpha}
\bibliography{References}

\newcommand{\etalchar}[1]{$^{#1}$}
\begin{thebibliography}{CCG{\etalchar{+}}94}

\bibitem[BLSY24]{blsy}
J.~A. Baldwin, Z.~Li, S.~Sivek, and F.~Ye.
\newblock Small {D}ehn surgery and {${\rm SU}(2)$}.
\newblock {\em Geom. Topol.}, 28(4):1891--1922, 2024.

\bibitem[BS18]{bs-stein}
J.~A. Baldwin and S.~Sivek.
\newblock Stein fillings and {SU}(2) representations.
\newblock {\em Geom. Topol.}, 22(7):4307--4380, 2018.

\bibitem[BS22]{bs-trefoil}
J.~A. Baldwin and S.~Sivek.
\newblock Khovanov homology detects the trefoils.
\newblock {\em Duke Math. J.}, 171(4):885--956, 2022.

\bibitem[BS23]{bs-lspace}
J.~A. Baldwin and S.~Sivek.
\newblock Instantons and {L}-space surgeries.
\newblock {\em J. Eur. Math. Soc. (JEMS)}, 25(10):4033--4122, 2023.

\bibitem[BZ98]{boyer-zhang-seminorms}
S.~Boyer and X.~Zhang.
\newblock On {C}uller-{S}halen seminorms and {D}ehn filling.
\newblock {\em Ann. of Math. (2)}, 148(3):737--801, 1998.

\bibitem[BZ03]{burde-zieschang}
G.~Burde and H.~Zieschang.
\newblock {\em Knots}, volume~5 of {\em De Gruyter Studies in Mathematics}.
\newblock Walter de Gruyter \& Co., Berlin, second edition, 2003.

\bibitem[BZ05]{boyer-zhang}
S.~Boyer and X.~Zhang.
\newblock Every nontrivial knot in {$S^3$} has nontrivial {$A$}-polynomial.
\newblock {\em Proc. Amer. Math. Soc.}, 133(9):2813--2815, 2005.

\bibitem[CCG{\etalchar{+}}94]{ccgls}
D.~Cooper, M.~Culler, H.~Gillet, D.~D. Long, and P.~B. Shalen.
\newblock Plane curves associated to character varieties of {$3$}-manifolds.
\newblock {\em Invent. Math.}, 118(1):47--84, 1994.

\bibitem[CGLS87]{cgls}
M.~Culler, C.~M. Gordon, J.~Luecke, and P.~B. Shalen.
\newblock Dehn surgery on knots.
\newblock {\em Ann. of Math. (2)}, 125(2):237--300, 1987.

\bibitem[CS83]{cs-splittings}
M.~Culler and P.~B. Shalen.
\newblock Varieties of group representations and splittings of {$3$}-manifolds.
\newblock {\em Ann. of Math. (2)}, 117(1):109--146, 1983.

\bibitem[CS84]{cs-surfaces}
M.~Culler and P.~B. Shalen.
\newblock Bounded, separating, incompressible surfaces in knot manifolds.
\newblock {\em Invent. Math.}, 75(3):537--545, 1984.

\bibitem[DG04]{dunfield-garoufalidis}
N.~M. Dunfield and S.~Garoufalidis.
\newblock Non-triviality of the {$A$}-polynomial for knots in {$S^3$}.
\newblock {\em Algebr. Geom. Topol.}, 4:1145--1153, 2004.

\bibitem[Gab86]{gabai-fibred}
D.~Gabai.
\newblock Detecting fibred links in {$S^3$}.
\newblock {\em Comment. Math. Helv.}, 61(4):519--555, 1986.

\bibitem[Gab87]{gabai-foliations3}
D.~Gabai.
\newblock Foliations and the topology of {$3$}-manifolds. {III}.
\newblock {\em J. Differential Geom.}, 26(3):479--536, 1987.

\bibitem[Hat82]{hatcher-slopes}
A.~E. Hatcher.
\newblock On the boundary curves of incompressible surfaces.
\newblock {\em Pacific J. Math.}, 99(2):373--377, 1982.

\bibitem[HMS08]{hms}
M.~Hirasawa, K.~Murasugi, and D.~S. Silver.
\newblock When does a satellite knot fiber?
\newblock {\em Hiroshima Math. J.}, 38(3):411--423, 2008.

\bibitem[HRW24]{hrw}
J.~Hanselman, J.~Rasmussen, and L.~Watson.
\newblock Bordered {F}loer homology for manifolds with torus boundary via
  immersed curves.
\newblock {\em J. Amer. Math. Soc.}, 37(2):391--498, 2024.

\bibitem[Kir97]{kirby-list}
Problems in low-dimensional topology.
\newblock In R.~Kirby, editor, {\em Geometric topology ({A}thens, {GA}, 1993)},
  volume~2 of {\em AMS/IP Stud. Adv. Math.}, pages 35--473. Amer. Math. Soc.,
  Providence, RI, 1997.

\bibitem[KM04a]{km-su2}
P.~B. Kronheimer and T.~S. Mrowka.
\newblock Dehn surgery, the fundamental group and {SU{$(2)$}}.
\newblock {\em Math. Res. Lett.}, 11(5-6):741--754, 2004.

\bibitem[KM04b]{km-p}
P.~B. Kronheimer and T.~S. Mrowka.
\newblock Witten's conjecture and property {P}.
\newblock {\em Geom. Topol.}, 8:295--310, 2004.

\bibitem[KM10a]{km-alexander}
P.~Kronheimer and T.~Mrowka.
\newblock Instanton {F}loer homology and the {A}lexander polynomial.
\newblock {\em Algebr. Geom. Topol.}, 10(3):1715--1738, 2010.

\bibitem[KM10b]{km-excision}
P.~Kronheimer and T.~Mrowka.
\newblock Knots, sutures, and excision.
\newblock {\em J. Differential Geom.}, 84(2):301--364, 2010.

\bibitem[KM11]{km-yaft}
P.~B. Kronheimer and T.~S. Mrowka.
\newblock Knot homology groups from instantons.
\newblock {\em J. Topol.}, 4(4):835--918, 2011.

\bibitem[Lim10]{lim}
Y.~Lim.
\newblock Instanton homology and the {A}lexander polynomial.
\newblock {\em Proc. Amer. Math. Soc.}, 138(10):3759--3768, 2010.

\bibitem[LPCZ23]{lpcz}
T.~Lidman, J.~Pinz\'{o}n-Caicedo, and R.~Zentner.
\newblock Toroidal integer homology three-spheres have irreducible
  {$SU(2)$}-representations.
\newblock {\em J. Topol.}, 16(1):344--367, 2023.

\bibitem[LY21]{li-ye-surgery}
Z.~Li and F.~Ye.
\newblock {$SU(2)$} representations and a large surgery formula.
\newblock arXiv:2107.11005, 2021.

\bibitem[Nie27]{nielsen-surfaces}
J.~Nielsen.
\newblock Untersuchungen zur {T}opologie der geschlossenen zweiseitigen
  {F}l\"{a}chen.
\newblock {\em Acta Math.}, 50(1):189--358, 1927.

\bibitem[NZ17]{ni-zhang}
Y.~Ni and X.~Zhang.
\newblock Detection of knots and a cabling formula for {$A$}-polynomials.
\newblock {\em Algebr. Geom. Topol.}, 17(1):65--109, 2017.

\bibitem[SZ22]{sz-pillowcase}
S.~Sivek and R.~Zentner.
\newblock {$SU(2)$}-cyclic surgeries and the pillowcase.
\newblock {\em J. Differential Geom.}, 121(1):101--185, 2022.

\bibitem[Thu22]{thurston-notes}
W.~P. Thurston.
\newblock {\em The geometry and topology of three-manifolds. {V}ol. {IV}}.
\newblock American Mathematical Society, Providence, RI, [2022] \copyright
  2022.
\newblock Edited and with a preface by Steven P. Kerckhoff and a chapter by J.
  W. Milnor.

\bibitem[Wal68]{waldhausen}
F.~Waldhausen.
\newblock On irreducible {$3$}-manifolds which are sufficiently large.
\newblock {\em Ann. of Math. (2)}, 87:56--88, 1968.

\bibitem[Wu11]{wu}
Y.-Q. Wu.
\newblock The classification of toroidal {D}ehn surgeries on {M}ontesinos
  knots.
\newblock {\em Comm. Anal. Geom.}, 19(2):305--345, 2011.

\bibitem[Zen18]{zentner}
R.~Zentner.
\newblock Integer homology 3-spheres admit irreducible representations in
  {${\rm SL}(2,\Bbb{C})$}.
\newblock {\em Duke Math. J.}, 167(9):1643--1712, 2018.

\end{thebibliography}

\end{document}